\def\denom{168}
\newcommand{\W}{2}
\def\lessspacepmod#1{\ (\text{mod}\ #1)}
\newcommand{\WW}{}
\newcommand{\dd}{}
\newcommand{\1}{(1)}
\newcommand{\2}{(2)}
\newtheorem{theorem}{Theorem}
\newtheorem{lemma}{Lemma}
\newtheorem{proposition}{Proposition}
\theoremstyle{remark}
\title{Carmichael Numbers in All Possible Arithmetic Progressions}
\author{Daniel Larsen}
\date{}
\begin{document}
\maketitle
\begin{abstract}
We prove that every arithmetic progression either contains infinitely many Carmichael numbers or none at all. Furthermore, there is a simple criterion for determining which category a given arithmetic progression falls into. In particular, if $m$ is any integer such that $(m,2\phi(m))=1$ then there exist infinitely many Carmichael numbers divisible by $m$. As a consequence, we are able to prove that $\liminf_{n\text{ Carmichael}}\frac{\phi(n)}{n}=0$, resolving a question of Alford, Granville, and Pomerance \cite{AGP:1994}. 
\end{abstract}
\section{Introduction}
If $n$ is a positive integer such that $a^n\equiv a\lessspacepmod{n}$ for every integer $a$ then the Fermat primality test predicts that $n$ is prime. Nevertheless, there exist composite values of $n$ for which this is also true. Such values are called \emph{Carmichael numbers}, though it was \v{S}imerka \cite{S:1885} who discovered the first known example, $561$. According to Korselt's criterion \cite{K:1899}, a square-free composite number $n$ is a Carmichael number if and only if for every prime $p$ dividing $n$, $p-1$ divides $n-1$.

In 1994, the seminal work of Alford, Granville, and Pomerance \cite{AGP:1994} showed that there are infinitely many Carmichael numbers. In analogy with the study of primes, a natural subsequent line of inquiry is the distribution of Carmichael numbers in arithmetic progressions. Wright \cite{W:2013} (following Banks-Pomerance \cite{BP:2010} and Matom\"{a}ki \cite{M:2013}) has shown that there are infinitely many Carmichael numbers which are $a$ mod $q$ when $(a,q)=1$. Unlike in the case of primes, the condition $(a,q)>1$ does not immediately rule out the possibility of infinitely many Carmichael numbers which are $a$ mod $q$. A simpler question, raised by Alford, Granville, and Pomerance, is whether there exists a fixed integer $m>1$ which divides infinitely many Carmichael numbers. Although Banks \cite{B:2021} has called it ``an age-old question,'' we do not know an earlier reference for it than \cite{AGP:1994}. Our main theorem answers this question, proving that every divisor of one Carmichael number divides infinitely many of them. More generally, we show that arithmetic progressions for which Carmichael numbers are not excluded by elementary modular constraints in fact contain infinitely many. The precise statement will be given later in the introduction as Theorem \ref{main}.

We will now present a rough sketch of the Alford-Granville-Pomerance (AGP) method, both to establish notation and to explain why, in its existing form, it cannot establish the infinitude of Carmichael numbers which are divisible by a fixed integer $m>1$. In \cite{AGP:1994}, the authors fix a parameter $y$ and set $\mathcal{Q}$ to be the set of primes $q\leq y$ with the property that $q-1$ has no large prime factors, meaning that if $p\mid q-1$ then $p\leq y^{1-\epsilon}$. Then they set $L=\prod_{q\in\mathcal{Q}}q$. The significance of this construction is that the least common multiple $\lambda(L)$ of the $q-1$ is very small compared to $e^y$, which makes $(\mathbb{Z}/L\mathbb{Z})^\times$ easy to work with later on. For each $d\mid L$, they consider the set of $k\leq x$ for which $dk+1$ is prime, where $x$ is some parameter which is roughly speaking exponential in $y$. Using results on primes in arithmetic progressions, it can be shown that about as many such values of $k$ exist as would be expected, which is to say about $\frac{x}{\log x}$. Therefore, this construction gives many pairs $(d,k)$ with $d\mid L$ and $k\leq x$ such that $dk+1$ is prime, and from this, we can infer that there is some $k$ in particular for which there are many $d\mid L$ with $dk+1$ prime. Let the set of these primes be $\mathcal{P}$.

We will now use these primes to construct a Carmichael number. Suppose we can find a product $\Pi$ of distinct elements of $\mathcal{P}$ which is congruent to 1 mod $L$. We claim that $\Pi$ is then necessarily a Carmichael number. Indeed, if $p\mid \Pi$ then $p$ takes the form $dk+1$, and since $d\mid L$, it suffices to show that $\Pi\equiv 1\lessspacepmod{kL}$. By assumption, $\Pi\equiv 1\lessspacepmod{L}$ and by construction, $\Pi\equiv 1\lessspacepmod{k}$. Therefore, assuming that $(k,L)=1$ (which can be arranged), we conclude from Korselt's criterion that $\Pi$ is indeed a Carmichael number.

A key part of the argument is showing that one can find products of elements of $\mathcal{P}$ which are congruent to 1 mod $L$. Roughly speaking, one can show using character theory that the residues of products of distinct elements of $\mathcal{P}$ equidistribute inside the smallest subgroup of $(\mathbb{Z}/L\mathbb{Z})^{\times}$ which contains almost all of the elements of $\mathcal{P}$. This proof makes critical use of the fact that $(\mathbb{Z}/L\mathbb{Z})^{\times}$ has no large cyclic factors, in the form that if $\chi$ is a Dirichlet character mod $L$ and $\chi(n)\neq 1$ then $\vert\chi(n)-1\vert$ is relatively large. One would expect that no proper subgroup of $(\mathbb{Z}/L\mathbb{Z})^{\times}$ would contain almost all the elements of $\mathcal{P}$. However, this is difficult to prove and unnecessary for Alford, Granville, and Pomerance; in fact, the existence of such a subgroup would increase the number of Carmichael numbers produced by their method.

The most obvious way to show that there are infinitely many Carmichael numbers divisible by $m$ would be to mimic the AGP construction and then multiply by $m$ at the end. For instance, one could seek a product $\Pi$ of elements in $\mathcal{P}$ which is congruent to $\frac{1}{m}$ mod $L$. There is the potential difficulty that the elements of $\mathcal{P}$ might belong to a subgroup of $(\mathbb{Z}/L\mathbb{Z})^{\times}$ which does not contain $\frac{1}{m}$. This seems rather unlikely, however, so one could hope that probabilistic methods would establish this to be non-generic behavior. A more fundamental problem is that while $m\Pi$ might be congruent to 1 mod $L$, it will be congruent to $m$ mod $k$. Therefore, the AGP method cannot be used directly to show that there are infinitely many Carmichael numbers divisible by $m$.

In this paper, we use a modification of their method, the main idea of which is to use two sets of primes of the form $\{dk+1\}$ instead of one. More explicitly, for suitable integers $k_1$, $k_2$, $L_1=\prod_{q\in\mathcal{Q}_1}q$, and $L_2=\prod_{q\in\mathcal{Q}_2}q$, we take $$\mathcal{P}_1:=\{dk_1+1: d\in \mathcal{D}_1\}$$ and $$\mathcal{P}_2:=\{dk_2+1: d\in\mathcal{D}_2\},$$ where $\mathcal{D}_1$ and $\mathcal{D}_2$ are drawn from the divisors of $L_1$ and $L_2$, respectively. Our goal is to find a square-free integer $\Pi_1$ whose prime factors lie in $\mathcal{P}_1$ such that $\Pi_1\equiv 1\lessspacepmod{L_1}$ and $\Pi_1\equiv \frac{1}{m}\lessspacepmod{k_2L_2}$. If this is in general possible then we can swap indices and obtain $\Pi_2$ with corresponding properties. Assuming that $k_1$, $k_2$, $L_1$, and $L_2$ are mutually coprime, $m\Pi_1\Pi_2\equiv 1\lessspacepmod{k_1k_2L_1L_2}$. If we can also arrange that $\Pi_1$ and $\Pi_2$ are 1 mod $\phi(m)$, Korselt's criterion will be satisfied. This last condition is not very serious. In particular, the condition is trivial in the case $m=3$, which is already sufficient to solve a problem posed by Alford, Granville, and Pomerance \cite{AGP:1994} as to whether $\frac{\phi(n)}{n}\to 1$ as $n$ ranges over Carmichael numbers.

The construction of $\mathcal{Q}_1$ and $\mathcal{Q}_2$, as well as the development of the necessary sieve methods, takes up Sections \ref{AP} and \ref{set-up}. In Section \ref{k-constr}, we show that there are many values of $k_1$ and $k_2$ for which the associated sets $\mathcal{P}_1$ and $\mathcal{P}_2$ are large. In Section \ref{carmichaelconstruction}, we construct auxiliary products $A_i$ which are 1 mod $L_i$ and $\frac{1}{m}$ mod $L_{3-i}$, for $i\in\{1,2\}$. Because the methods in that section preview those in Section \ref{extra}, we will say nothing more of the construction of $A_1$ and $A_2$ for now. We will instead give an outline of the strategy of Sections \ref{heart} and \ref{extra}, in which we finish the construction of $\Pi_1$ and $\Pi_2$.

We would like to use character methods to show that there are many products $\Pi'_1$ of distinct elements of $\mathcal{P}_1$ which are $1$ mod $L_1L_2$ and $(mA_1)^{-1}$ mod $k_2$. If we can show this then $\Pi_1$ can be taken to be $A_1\Pi'_1$ (and $\Pi_2$ can be dealt with analogously). The idea is to show that for most choices of $k_1$ and $k_2$, it holds for any non-trivial character $\chi$ of $(\mathbb{Z}/k_2L_1L_2)^\times$ that the average value of $\chi(\Pi_1)$, as $\Pi'_1$ ranges over the set of products of $\mathcal{P}_1$, is extremely close to $0$. To do this, it suffices to show that $\chi$ does not take values extremely close to $1$ on almost all elements of $\mathcal{P}_1$. Suppose $\chi$ is a counterexample. Let $L=L_1L_2$. If $\chi^{\lambda(L)}$ is not trivial then we replace $\chi$ with $\chi^{\lambda(L)}$, as $\lambda(L)$ is by construction sufficiently small that the values of this new character will still be very close to $1$. After this potential replacement, we could hope that the resulting $\chi$ would be trivial either on $(\mathbb{Z}/L\mathbb{Z})^\times$ or on $(\mathbb{Z}/k_2\mathbb{Z})^\times$. This would be true if the mod $L$ component and the mod $k_2$ component of the character had relatively prime order. By careful construction of $k_2$, we can almost ensure that this happens, but it is possible that $\chi$ will be merely quadratic mod $L$ or $k_2$, without being trivial.

Some effort will have to be spent closing this hole, but for now, we will simply assume $\chi$ really is trivial mod $L$ or $k_2$. We can avoid the former case by ruling out the possibility that almost all $p\in \mathcal{P}_1$ lie in an index $r$-subgroup of $(\mathbb{Z}/L_2\mathbb{Z})^\times$ for any particular prime $r$. The key idea is that if we fix $r$, we can work modulo the product of all primes in $\mathcal{Q}_2$ which are $1$ mod $r$. This is a small enough modulus that we can hope to prove equidistribution of the elements of $\mathcal{D}_1$ and therefore of the elements of $\mathcal{P}_1$. This equidistribution is actually established in Section \ref{carmichaelconstruction}, where it is more essential. Indeed, because $\Pi'_1$ is meant to be 1 mod $L$, it isn't a problem if our primes are contained in some proper subgroup mod $L$; the point at which equidistribution is really important is in the construction of $A_1$, where we are trying to find a product congruent to $\frac{1}{m}$ mod $L_2$.

Our choice of $k_1$ and $k_2$ is aimed at dealing with the second of the two cases, that $\chi(p)$ depends only on $p$ mod $k_2$. Unless the order of $\chi$ is quite large, we only need to rule out the possibility that mod $k_2$ almost all $p\in \mathcal{P}_1$ lie in $\ker \chi$. In the case that the order of $\chi$ is quite small, we use a version of the large sieve which deals with characters of fixed prime order to show that such a phenomenon happens for a relatively small number of choices for $k_1$ and $k_2$. For this sieve result, we need to use some algebraic number theory, in the form of $r$th power reciprocity. On the other hand, if $\chi$ is of larger order then the character values being very close to 1 becomes unlikely, to the extent that this happening for most values of $k_1$ and $k_2$ is combinatorially impossible. We conclude that for generic choices of $k_1$ and $k_2$, the elements of $\mathcal{P}_1$ will not have character values clustered around the identity. Then the products of the primes will equidistribute mod $k_2L$, allowing us to choose $\Pi'_1$ satisfying the desired congruences.

So far, we have only discussed how to construct Carmichael numbers congruent to 0 mod $m$. Dealing with general arithmetic progressions will be only slightly more involved. To demonstrate, we turn to arithmetic progressions in the opposite extreme: those corresponding to elements of $(\mathbb{Z}/m\mathbb{Z})^{\times}$. If $r$ is a prime which does not divide $m$ then Carmichael numbers congruent to $r$ mod $m$ can be obtained by constructing Carmichael numbers divisible by $r$ with the additional property that each of the added factors is 1 mod $m$. (This could be arranged by taking $k_1$ and $k_2$ to be divisible by $m$; however, this approach introduces some additional complications that we are able to avoid with a slightly different method.) As mentioned previously, these arithmetic progressions are already known to contain infinitely many Carmichael numbers. A long-standing problem has been to show that there are at least $x^{\epsilon}$ Carmichael numbers in such arithmetic progressions for $x$ sufficiently large in terms of $\epsilon$ (a positive constant). When $r$ is a quadratic residue mod $m$, Matom\"{a}ki has established this. However, when $r$ is not a quadratic residue, the best known bound is $x^{\frac{1}{6\log\log\log x}}$, due to Pomerance \cite{P:2021}. This reflects a fundamental limitation of Wright's method \cite{W:2013} which involves 2-subgroups of $(\mathbb{Z}/mL\mathbb{Z})^{\times}$. Our method is based on a different approach and allows us to prove that there exist at least $x^{1/\denom-\epsilon}$ Carmichael numbers congruent to $r$ mod $m$ for sufficiently large $x$ for all $r$ and $m$ for which modular considerations do not forbid Carmichael numbers. This gives a definitive solution to a suitably modified version of a conjecture of Banks and Pomerance \cite{BP:2010}.

Let $r\lessspacepmod{m}$ be an arithmetic progression. Set $g=(r,m)$ and then letting $\lambda(g)$ be the maximum element order in $(\mathbb{Z}/g\mathbb{Z})^{\times}$, set $h=(\lambda(g),m)$. We say that $r\lessspacepmod{m}$ is \emph{Carmichael incompatible} if any of the following occur:

\begin{itemize}
\item $(g,2\phi(g))>1$
\item $h\nmid r-1$
\item $36\mid m$, $r\equiv 3\lessspacepmod{12}$, and $r/g\equiv 5\text{ or }7\lessspacepmod{12}$
\end{itemize}

Otherwise, we say $r\lessspacepmod{m}$ is \emph{Carmichael compatible}. We will show in Section \ref{end} that only Carmichael compatible arithmetic progressions can contain Carmichael numbers. On the other hand, the main theorem we will prove in this paper is that every Carmichael compatible arithmetic progressions contains infinitely many Carmichael numbers.

\begin{theorem}\label{main}
Let $r\lessspacepmod{m}$ be a Carmichael compatible arithmetic progression. Then for every $\epsilon>0$ and for every $x$ sufficiently large in terms of $\epsilon$, there are more than $x^{1/\denom-\epsilon}$ Carmichael numbers less than $x$ congruent to $r$ mod $m$.
\end{theorem}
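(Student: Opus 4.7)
The plan is to implement the two-family AGP variant sketched in the introduction, treating first the case $r\equiv 0\pmod m$ and then reducing general Carmichael compatible progressions to it by adjusting unit residues. Fix $y$ and choose disjoint sets $\mathcal{Q}_i$ ($i=1,2$) of primes $q\le y$ for which $q-1$ is $y^{1-\epsilon}$-smooth; set $L_i=\prod_{q\in\mathcal{Q}_i}q$ and let $x$ be roughly exponential in $y$. A Bombieri--Vinogradov-type input developed in Sections \ref{AP} and \ref{set-up} gives that for each $d\mid L_i$ the count of $k\le x$ with $dk+1$ prime is near its expected value. Averaging and pigeonholing as in Section \ref{k-constr} then yields specific $k_1,k_2$, pairwise coprime with each other and with $L_1L_2\, m\phi(m)$, for which the sets $\mathcal{P}_i=\{dk_i+1:d\in\mathcal{D}_i\}$ (with $\mathcal{D}_i$ a large subset of the divisors of $L_i$) are of near-maximal size.

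Assuming Section \ref{carmichaelconstruction} produces products $A_i$ of primes from $\mathcal{P}_i$ satisfying $A_i\equiv 1\pmod{L_i}$ and $A_i\equiv 1/m\pmod{L_{3-i}}$, it then suffices to find further products $\Pi'_i$ of distinct primes in $\mathcal{P}_i$ (disjoint from the support of $A_i$) with $\Pi'_i\equiv 1\pmod{L_1L_2\phi(m)}$ and $\Pi'_i\equiv (mA_i)^{-1}\pmod{k_{3-i}}$. For then $\Pi_i=A_i\Pi'_i$ forces $N:=m\Pi_1\Pi_2\equiv 1\pmod{kL}$ for each $(k,L)\in\{(k_1,L_1),(k_2,L_2)\}$, so every prime factor $p=dk_i+1$ of $N$ has $p-1\mid N-1$; the congruence modulo $\phi(m)$ handles primes dividing $m$, and Korselt's criterion gives that $N$ is Carmichael. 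General $r\pmod m$ is handled by writing $r=gr'$ with $g=(r,m)$, replacing $1/m$ as the target residue of $A_i$ by a shift determined by $r'$ in $(\mathbb{Z}/(m/g)\mathbb{Z})^{\times}$, and observing that Carmichael compatibility is precisely what is needed to make the resulting system of congruences solvable.

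The crux is the character-theoretic equidistribution argument in Sections \ref{heart} and \ref{extra}. For generic $(k_1,k_2)$ we need that products of subsets of $\mathcal{P}_1$ equidistribute in $(\mathbb{Z}/k_2L_1L_2\mathbb{Z})^{\times}$, which reduces via Fourier expansion to showing that no non-trivial character $\chi$ has $|\chi(p)-1|$ uniformly tiny on almost all $p\in\mathcal{P}_1$. After replacing $\chi$ by $\chi^{\lambda(L_1L_2)}$ if needed (smoothness of the $q-1$ keeps $\lambda(L_1L_2)$ small enough that character values stay near $1$), one aims to force $\chi$ to be trivial either mod $L_1L_2$ or mod $k_2$. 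The mod-$L$ case is ruled out by reusing the equidistribution of Section \ref{carmichaelconstruction}: concentration of $\mathcal{P}_1$ in a subgroup of prime index $r$ of $(\mathbb{Z}/L_2\mathbb{Z})^{\times}$ is contradicted by working modulo the subproduct of $\mathcal{Q}_2$ of primes $\equiv 1\pmod r$. The mod-$k_2$ case is attacked in Section \ref{extra} with a large sieve for characters of fixed prime order $r$, powered by $r$th power reciprocity; $\chi$ of larger order is killed combinatorially since $|\chi(p)-1|$ cannot be uniformly below $1/\mathrm{ord}(\chi)$ on all of $\mathcal{P}_1$.

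The main obstacle, explicitly flagged in the introduction, is that $\chi^{\lambda(L_1L_2)}$ need not actually be trivial in one of the two components: $\chi$ may be simultaneously quadratic mod $L_1L_2$ and mod $k_2$, and the ``coprime orders'' trick fails. Closing this hole will require refined control over the $2$-part of $k_2$ together with a dedicated second-moment argument for quadratic characters, and it is this residual case that forces the third bullet of the Carmichael incompatibility definition. Once equidistribution is in hand, the number of admissible $\Pi'_1$ is at least $2^{|\mathcal{P}_1|}/|(\mathbb{Z}/k_2L_1L_2\mathbb{Z})^{\times}|$, with an analogous count for $\Pi'_2$, and optimizing the parameters $y$ and $x$ against the size of $\lambda(L_1L_2)$ and the prime density produced by Bombieri--Vinogradov yields the lower bound $x^{1/\denom-\epsilon}$ claimed.
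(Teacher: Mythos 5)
Your outline reproduces the introduction's sketch faithfully, but at the two points where the actual difficulty lives you either defer or admit a gap, so this is not yet a proof. The most serious issue is the one you flag yourself: the case in which $\chi^{\lambda(L_1L_2)}$ is quadratic (or of $2$-power order) simultaneously in the mod-$L$ and mod-$k_2$ components. You say closing it ``will require refined control over the $2$-part of $k_2$ together with a dedicated second-moment argument for quadratic characters,'' but you never supply that argument, and in fact no such cancellation argument is what makes the paper work. The paper circumvents the $2$-part entirely by a structural choice of target: $m_0$ is forced to be a quadratic residue modulo every $q\in\mathcal{Q}$ (Property 2 of Proposition \ref{set-up-prop}), the admissible $k$ are restricted so that $(\mathbb{Z}/k\mathbb{Z})^{\times}$ carries no order-$2^T$ characters and $(\phi(k),\phi(L))$ is a power of $2$, and the large sieve for characters of order $2^T$ (equation \eqref{applied}) is used in Section \ref{heart} to show that some $A\in\mathcal{A}^{\1}_{k_1}$ makes $m_0A$ of \emph{odd order} mod $k_2$. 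Consequently, in Lemma \ref{hittarget} the target element $g$ has odd order, every character of the $2$-Sylow component is trivial on it, and the $2$-part is absorbed into the Davenport-constant lower bound of Lemma \ref{standard} instead of having to be beaten by equidistribution. Without either this device or a genuine replacement for it, your equidistribution claim for $(\mathbb{Z}/k_2L_1L_2\mathbb{Z})^{\times}$ is simply false in the residual case, and the construction of $\Pi'_i$ does not go through.

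The second gap is the endgame. Reducing a general Carmichael compatible progression $r\lessspacepmod{m}$ to the divisibility case by ``adjusting unit residues'' glosses over exactly the constraints that make the general case delicate: one must also guarantee $\lambda(g)\mid n-1$ and control residues mod $12$, which is why the paper proves the stronger Proposition \ref{workhorse} (prime factors in a prescribed class $a_0\lessspacepmod{q_0}$, $\omega(\Pi)\equiv 2\lessspacepmod{\ell_0}$, and $\frac{p-1}{2}\mid m_0\Pi-1$), performs two reduction lemmas introducing auxiliary primes $\equiv 5,7\lessspacepmod{12}$, and constructs the integer $P$ of Lemma \ref{twoprimes} by another sieve/Cauchy--Schwarz argument before multiplying the seed by $gP$ in Section \ref{end}. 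Your proposal never confronts these congruences, and ``Carmichael compatibility is precisely what is needed'' is asserted rather than shown. Finally, your count ``at least $2^{|\mathcal{P}_1|}/|(\mathbb{Z}/k_2L_1L_2\mathbb{Z})^{\times}|$ admissible $\Pi'_1$'' cannot be the source of the exponent: unrestricted subset products vastly exceed $x$, and one must count products of at most $\lfloor |S|^{\epsilon}\rfloor$ primes via Proposition \ref{alteredAGPprop}; it is this bounded-length count, with $\delta=\frac16$ and $\rho=\frac1{24}$, that produces $1/\denom$, not a parameter optimization of the crude $2^{|\mathcal{P}_1|}$ bound.
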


We will show in Section \ref{end} that this theorem is a consequence of the following proposition.

\begin{proposition}\label{workhorse}
Let $m_0$ be an odd square-free integer, and let $\ell_0$ be a positive integer which is not divisible by $4$. Let $a_0$ mod $q_0$ be a relatively prime residue with $(a_0-1,q_0)=2$. Then for every $\epsilon>0$ and every $x$ sufficiently large in terms of $\epsilon$, there exist more than $x^{\frac{1}{\denom}-\epsilon}$ values of $\Pi\leq x$ with $\Pi$ square-free and relatively prime to $2m_0$, with $\omega(\Pi)\equiv 2\lessspacepmod{\ell_0}$, and such that for $p\mid \Pi$, we have $\frac{p-1}{2}\mid m_0\Pi-1$ and $p\equiv a_0\lessspacepmod{q_0}$.
\end{proposition}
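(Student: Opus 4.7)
The plan is to construct $\Pi$ as a product of carefully selected primes from two sets $\mathcal{P}_1$ and $\mathcal{P}_2$ of the form $\{dk_i+1 : d \mid L_i\}$, following the modified AGP framework outlined in the introduction. Specifically, I would take $\Pi = A_1 A_2 \Pi'_1 \Pi'_2$, where each $A_i$ is the auxiliary product from Section \ref{carmichaelconstruction} with prime factors in $\mathcal{P}_i$ satisfying $A_i \equiv 1 \pmod{L_i}$ and $A_i \equiv m_0^{-1} \pmod{L_{3-i}}$, and each $\Pi'_i$ is built via the character machinery of Sections \ref{heart} and \ref{extra} from further primes in $\mathcal{P}_i$, satisfying $\Pi'_i \equiv 1 \pmod{L_1 L_2}$ and $\Pi'_i \equiv (m_0 A_i)^{-1} \pmod{k_{3-i}}$.

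First I would apply the sieve results of Sections \ref{AP} and \ref{set-up} to construct two disjoint sets $\mathcal{Q}_1, \mathcal{Q}_2$ of primes $q \leq y$, each with $q-1$ a product of small primes, with $q$ coprime to $2 m_0 q_0 \ell_0$, and arranged so that $L_i := \prod_{q \in \mathcal{Q}_i} q$ satisfies $L_i \equiv 1 \pmod{q_0 \ell_0}$. The smoothness of $q-1$ produces the critical bound $\lambda(L_i) \leq e^{O(y^{1-\epsilon})}$ on the exponent of $(\mathbb{Z}/L_i\mathbb{Z})^\times$, which drives all the character estimates. Next I would invoke Section \ref{k-constr} to exhibit many pairs $(k_1, k_2)$ of even integers, pairwise coprime and coprime to $L_1 L_2 m_0 q_0 \ell_0$, with each $k_i \leq x^{1/\denom}$ and $k_i \equiv a_0 - 1 \pmod{q_0}$, for which the sets
$$\mathcal{P}_i := \{dk_i + 1 : d \mid L_i, \ dk_i + 1 \text{ prime}\}$$
have the expected large size. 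The constraint on $k_i$ mod $q_0$ guarantees $p \equiv a_0 \pmod{q_0}$ for every $p \in \mathcal{P}_i$ (since $d \equiv 1 \pmod{q_0}$), and the hypothesis $(a_0 - 1, q_0) = 2$ ensures that such a $k_i$ can be chosen even, which is necessary for $p$ to be odd given that $d$ is odd.

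With these ingredients in hand, I would apply the main estimates of Sections \ref{heart} and \ref{extra} to show that, for generic $(k_1, k_2)$, products of distinct subsets of $\mathcal{P}_i$ equidistribute modulo $k_{3-i} L_1 L_2$ in the appropriate Fourier sense, with the number of prime factors $\omega(\Pi'_i)$ prescribable modulo $\ell_0$. This lets me choose the $\Pi'_i$ so that $\omega(\Pi'_1) + \omega(\Pi'_2) \equiv 2 - \omega(A_1 A_2) \pmod{\ell_0}$, giving $\omega(\Pi) \equiv 2 \pmod{\ell_0}$, while simultaneously satisfying the desired congruences. The resulting $\Pi = A_1 A_2 \Pi'_1 \Pi'_2$ is square-free and coprime to $2 m_0$ by construction of $\mathcal{P}_i$, and the count of valid $\Pi$ is $\gg x^{1/\denom - \epsilon}$ because $|\mathcal{P}_i| \gg x^{1/\denom}/\log x$ and a positive proportion of products satisfy the character constraints. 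For the Korselt-type condition, combining the congruences yields $m_0 \Pi \equiv 1 \pmod{k_1 k_2 L_1 L_2}$, and since each $p \mid \Pi$ has $p - 1 = d k_i$ with $d \mid L_i$, hence $p - 1 \mid k_1 k_2 L_1 L_2$, we obtain $p - 1 \mid m_0 \Pi - 1$ and therefore the required $(p-1)/2 \mid m_0 \Pi - 1$.

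The main obstacle lies in Sections \ref{heart} and \ref{extra}: showing that for most $(k_1, k_2)$ no non-trivial character $\chi$ mod $k_{3-i} L_1 L_2$ takes values on $\mathcal{P}_i$ clustered near $1$. Replacing $\chi$ by $\chi^{\lambda(L_1 L_2)}$ only partially decouples the mod-$L_1 L_2$ and mod-$k_{3-i}$ components, leaving a recalcitrant case in which $\chi$ is quadratic (but non-trivial) on one or both factors. Handling this quadratic obstruction requires a combination of the equidistribution of $\mathcal{D}_i$ within $(\mathbb{Z}/L_1 L_2 \mathbb{Z})^\times$ already established in Section \ref{carmichaelconstruction}, together with a large-sieve inequality for characters of fixed prime order $r$ resting on $r$th-power reciprocity, to eliminate most values of $(k_1, k_2)$. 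The hypothesis $4 \nmid \ell_0$ enters to prevent an analogous quadratic pathology in the $\ell_0$-component of $\chi$ used to prescribe $\omega(\Pi) \pmod{\ell_0}$.
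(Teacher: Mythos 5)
Your outline coincides with the paper's own strategy (it is essentially the road map from the introduction: two families of primes $dk_i+1$, auxiliary products $A_i$, character equidistribution mod $k_{3-i}L_1L_2$, and a fixed-prime-order large sieve), so as a sketch most of it is sound; but one step fails as stated, namely how you obtain $p\equiv a_0\pmod{q_0}$. You arrange $L_i\equiv 1\pmod{q_0\ell_0}$ and then claim $d\equiv 1\pmod{q_0}$ for the divisors $d\mid L_i$; this is a non sequitur, since a product being $1$ mod $q_0$ says nothing about its divisors. The only way to force every $d\in\mathcal{D}_i$ into the class $1$ mod $q_0$ is to force every $q\in\mathcal{Q}_i$ to be $1$ mod $q_0$, and that clashes with the construction you are invoking: in the intended application $4\mid q_0$ (Section \ref{end} takes $q_0=[q',4]$), so $q\equiv 1\pmod{q_0}$ would make $\frac{q-1}{2}$ even, violating Property 3 of Proposition \ref{set-up-prop}, and any odd prime factor of $q_0$ would divide every $\frac{q-1}{2}$, violating Properties 5 and 6 --- which are precisely what make the odd-index subgroup escape and Lemma \ref{equi} (via Property 7*, which needs $\omega(L_2(p))\leq y^{\rho}$) work. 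The paper handles this differently: $\mathcal{Q}_i$ is left unconstrained mod $q_0$, one takes $d^{\star}_i$ to be the most common residue of $\mathcal{D}'_i$ mod $q_0$, restricts to $d\equiv d^{\star}_i\pmod{q_0}$, and builds $k\equiv \overline{d^{\star}_i}(a_0-1)\pmod{q_0}$ into the definition of $\mathcal{K}$, so that $dk+1\equiv a_0\pmod{q_0}$; the hypothesis $(a_0-1,q_0)=2$ is what makes this compatible with $k$ even and $\mathrm{P}^-(k/2)>x^{\iota}$.

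A second missing idea concerns the target residue mod $k_{3-i}$. You treat each $A_i$ as a single product and ask for $\Pi'_i\equiv (m_0A_i)^{-1}\pmod{k_{3-i}}$, but the character argument can only hit elements of odd order mod $k_{3-i}$: in Lemma \ref{hittarget} the $2$-Sylow component cannot be prescribed (one needs $\chi_4(g)=1$). This is why the paper constructs a large family $\mathcal{A}^{\1}_{k_1}$ in Proposition \ref{Lresidue} and then shows, via the order-$2^T$ character sieve (the application of \eqref{Balestrieri-Rome} in Section \ref{heart}) together with the conditions ``no order $2^T$ characters mod $k$'' and ``$(\phi(k),\phi(L))$ a power of $2$'' in the definition of $\mathcal{K}$, that for almost all $k_2$ some $A\in\mathcal{A}^{\1}_{k_1}$ has $m_0A$ of odd order mod $k_2$; a single fixed $A_1$ would in general not work. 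Finally, the quantitative claim is not obtained the way you describe: it is not true that a positive proportion of subset products satisfy the constraints, and $k_i$ is not restricted to $x^{1/\denom}$; the exponent $1/\denom$ comes from the Davenport-constant subsequence count (Proposition \ref{alteredAGPprop}) applied to subsets of size about $\vert S\vert^{\epsilon}$, compared against the size bound $z$ of the resulting seeds. These points are repairable within the framework you cite, but as written they are genuine gaps rather than omitted routine details.
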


For Sections \ref{set-up}--\ref{extra}, we will fix $a_0$, $q_0$, $m_0$, and $\ell_0$ satisfying the conditions of this proposition. Then we will construct values of $\Pi$ satisfying the desired congruences, values we will call \emph{Carmichael seeds}.

The constant $1/\denom$ is not the optimal value which can be obtained by this method. To begin with, we expect that more involved sieve arguments could yield a slightly stronger bound.

In a recent preprint \cite{W:2024}, Wright independently introduced the idea of using two sets of primes to resolve, under the assumption of a strong conjecture on the smallest prime in an arithmetic progression, the question of whether there exist infinitely many Carmichael numbers $\prod_{i=1}^m p_i$ for which $\gcd(p_1-1,\ldots,p_m-1)$ is bounded.  Our main theorem immediately resolves this question unconditionally.

Alford, Granville, and Pomerance ask about numbers which satisfy an altered version of Korselt's criterion in which $p-a\mid n-b$. To our knowledge, the infinitude of composite numbers with this property is an open question unless $a,b\in\{\pm 1\}$. (See \cite{W:2018} for a discussion of why this is the case.) It seems the tools of this paper should be able to resolve this question for $a\in\{\pm 1\}$ and $b\in 2\mathbb{Z}+1$.

Finally, these methods should also be applicable to Fermat pseudoprimes, which can be found in arithmetic progressions which do not contain any Carmichael numbers. 
\section{Notation}
We use $\mathbb{P}$ to refer to the set of primes. The von Mangoldt function $\Lambda(n)$ is defined to be $\log p$ if $n=p^m$ for a prime $p$ and 0 otherwise. We use $\pi(x)$ to represent the number of primes up to $x$, while $\psi(x)$ gives a weighted count: $$\psi(x)=\sum_{n\leq x}\Lambda(n).$$ Similarly, $$\pi(x; q,a)=\#\{p\leq x: p\equiv a\lessspacepmod{q}\}$$ and $$\psi(x; q,a)=\sum_{\substack{n\leq x\\ n\equiv a\lessspacepmod{q}}}\Lambda(n).$$ We also let $$\psi(x,\chi)=\sum_{n\leq x}\Lambda(n)\chi(n)$$ and $$\psi'(x,\chi)=\begin{cases}\psi(x,\chi)&\chi\text{ non-principal}\\ \psi(x,\chi)-x&\text{else}\end{cases}$$ We use $(m,n)$ and $[m,n]$ to denote, respectively, the greatest common divisor and the least common multiple of $m$ and $n$. We take $\phi(n)$ to be Euler's totient function, defined as the order of the unit group of $\mathbb{Z}/n\mathbb{Z}$. Another important arithmetic function will be the Carmichael $\lambda$ function, defined as the exponent of $(\mathbb{Z}/n\mathbb{Z})^{\times}$, that is to say, the maximal element order in this group. If $n=\prod_{i=1}^kp_i$ is an odd square-free number then $\lambda(n)$ is the least common multiple of $\{p_i-1\}_{1\leq i\leq k}$. We define $\omega(n)$ to be the number of distinct primes dividing $n$. We define $\text{P}^-(n)$ to be the smallest prime factor of $n$. Two more important functions will be $\text{exp}(x):=e^x$ and $e(x):=e^{2\pi ix}$.

If we want to sum over primitive characters of $(\mathbb{Z}/n\mathbb{Z})^{\times}$, we will write $\sum_{\chi\text{ mod }n}^{\star}$. We will let $\chi_0$ be the trivial character, with the group $\chi_0$ is defined on implicit. If there is a character $\chi$ being discussed, $\chi_1$ will be taken to be the primitive character which induces $\chi$. The conductor of $\chi_1$ will be called $q_1$. In general, $p$ will always be a prime, whereas $q$ will sometimes represent a prime and in other places will represent the conductor of a character or more generally an integer such that $(\mathbb{Z}/q\mathbb{Z})^{\times}$ is important in some way. (Which of the two is meant should be clear from context.) 

We use Vinogradov notation in the standard fashion: $f\ll g$ means that there exists a constant $C$ with $f(x)\leq Cg(x)$ for sufficiently large $x$, while $f\gg g$ means there exists a constant $c>0$ with $f(x)\geq cg(x)$ for sufficiently large $x$. We also use $O$ notation, so $f=O(g)$ means $f\ll g$ while $f=o(g)$ means that the ratio of $\frac{f(x)}{g(x)}$ goes to 0. This has an additional flexibility which cannot be handled with Vinogradov notation. For instance, we can use $O$ notation in exponents. In this case, we may use a sign, as in the expression (occurring quite frequently in the paper) $x^{1-o(1)}$, which means a function $f(x)$ such that for all $\epsilon>0$, $f(x)>x^{1-\epsilon}$ for sufficiently large $x$ in terms of $\epsilon$. In general, we will assume that variables with the name $x$, $y$, or $N$ are large, and our asymptotic expressions will be premised on these variables being sufficiently large. Another useful convention will be to write $x\sim y$ for $x\in [y,2y)$. If we say that $x$ is bounded by $y$ then we mean $x\leq y$, while if we say that $x$ is bounded asymptotically by $y$ then we mean $x=O(y)$.

We say that almost none of the elements in a set $\mathcal{S}$ have a property if the number of elements with the property is $o(\vert\mathcal{S}\vert)$. We say that almost all of the elements satisfy a property if almost none of the elements satisfy the negation of the property.

Because this paper is based around an extended construction in which many variables and parameters are used, we will devote the remainder of this section to collecting their values (or approximate values) in one place. The starting parameters are $y$ and $\iota$. The former determines the size of the Carmichael numbers and the latter determines how small the $\epsilon$ in Theorem \ref{main} is.  We will choose $\iota$ to be very small and $y$ to be very large with reference to $\iota$. It will be convenient to assume $1/\iota$ is an integer. Beginning at the end of Section \ref{AP}, we will set $\delta$ to $\frac{1}{6}$. 
We also take $\theta=\frac{1}{6}-2\iota$ and $\rho=\frac{1}{24}-2\iota$. Finally, we take $\kappa$ and $T$ to be large integers; for instance, they can both safely be given the value $100$.

Now we discuss the orders of magnitude of the various size parameters that will appear. We will set $x=y^{\frac{w}{\delta-3\iota}}$ where $w=\lfloor y^{\theta+\iota\rho}\rfloor$. The important thing to remember is that $x$ is much larger than $\lambda(L)$ (while being much smaller than $L$). In particular, we will set $J=\lambda(L)^{10}$, and $J$ will still be much smaller than $x$. Roughly, $\mathcal{D}_i$ will be the set of divisors of $L_i$ with $w$ prime factors.

\section{Primes in Arithmetic Progressions}\label{AP}
Before beginning the proof of our main theorem, we devote a section to the distribution of primes in arithmetic progressions, which will be relevant at several points in the construction. All the results in this section are similar to existing results in the literature, but it is useful to have versions of these statements in the exact form that we will want them. Let $\mathcal{A}$ be a set of positive integers less than $x$ and $\mathcal{P}$ be a set of primes. We set $$S(\mathcal{A},\mathcal{P},z)=\#\{a\in\mathcal{A}: (a,\mathcal{P}(z))=1\}$$ where $$\mathcal{P}(z)=\prod_{\substack{p\in\mathcal{P}\\ p\leq z}}p.$$ We also set $\mathcal{A}_d=\{a\in\mathcal{A}: d\mid a\}$. Given a multiplicative function $g$, we set $$R_d=\vert \mathcal{A}_d\vert-\frac{\vert\mathcal{A}\vert}{g(d)}$$ for $d$ square-free. We say that $\mathcal{A}$ has level of distribution $\theta$ with respect to $g$ and $\mathcal{P}$ if $$\sum_{\substack{d\leq x^{\theta}\\ d\mid \mathcal{P}(z)}}R_d\ll \frac{x}{\log^4 x}.$$ Let $\iota$ be a small positive constant. The asymptotic bounds in this section will be allowed to depend on $\iota$.

\begin{lemma}\label{realrosser}
Suppose $\vert\mathcal{A}\vert\gg \frac{x}{\log^2 x}$. Let $\mathcal{P}'=\mathbb{P}\setminus \mathcal{P}$ and $\mathcal{P}^{(1)}\coprod\mathcal{P}^{(2)}$
be a partition of $\mathcal{P}$. We assume that $2\in\mathcal{P}'$.
Let $g$ be the multiplicative function defined by $g(p)=p$ for $p\in\mathcal{P}^{(2)}$ and $g(p)=p-1$ for all other primes. If $\mathcal{A}$ has level of distribution $(2+\iota)\delta$ with respect to $g$ and $\mathcal{P}$ then $$S(\mathcal{A},\mathcal{P},x^{\delta})\gg \prod_{\substack{p\in\mathcal{P}'\\ 2<p\leq x^{\delta}}}\Big(1-\frac{1}{p-1}\Big)^{-1}\frac{\vert \mathcal{A}\vert}{\log x}$$
\end{lemma}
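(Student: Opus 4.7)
The plan is to invoke the Rosser--Iwaniec lower bound linear sieve of dimension $1$. Writing $|\mathcal{A}_d| = \omega(d) X/d + R_d$ with $X = |\mathcal{A}|$, the multiplicative density $\omega$ dictated by the hypothesis is $\omega(p) = p/g(p)$: that is, $\omega(p) = p/(p-1)$ on $\mathcal{P}^{(1)}$, $\omega(p) = 1$ on $\mathcal{P}^{(2)}$, and $\omega(p) = 0$ on $\mathcal{P}'$. In both nontrivial cases $\omega(p)/p = 1/p + O(1/p^2)$, so the dimension-$1$ hypothesis $\prod_{w \leq p < z,\, p\in\mathcal{P}}(1-\omega(p)/p)^{-1} \leq (\log z/\log w)(1+O(1/\log w))$ follows at once from Mertens' theorem. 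Taking sifting parameter $z = x^\delta$ and sieve level $D = x^{(2+\iota)\delta}$ produces $s := \log D/\log z = 2+\iota$, and the classical linear sieve function satisfies $f(2+\iota) \gg_\iota 1$ since $s > 2$.

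With these inputs the Rosser--Iwaniec sieve gives
$$S(\mathcal{A}, \mathcal{P}, x^{\delta}) \geq X\, V(x^{\delta})\bigl(f(s) + o(1)\bigr) - \sum_{\substack{d < D\\ d \mid \mathcal{P}(x^{\delta})}} |R_d|,$$
where $V(z) = \prod_{p\in \mathcal{P},\, p\leq z}(1 - 1/g(p))$. The level-of-distribution hypothesis bounds the remainder by $O(x/\log^4 x)$, which is $o(X/\log x)$ since $X \gg x/\log^2 x$, so the remainder is absorbed into the main term.

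It remains to identify $V(x^\delta)$ with the product appearing in the lemma. Since $2 \in \mathcal{P}'$, I would extract a uniform Mertens factor by writing
$$V(x^\delta) = \frac{\prod_{2 < p \leq x^\delta}\bigl(1 - \tfrac{1}{p-1}\bigr)}{\prod_{p \in \mathcal{P}',\, 2 < p \leq x^\delta}\bigl(1 - \tfrac{1}{p-1}\bigr)} \cdot \prod_{\substack{p \in \mathcal{P}^{(2)}\\ p \leq x^\delta}} \frac{1 - 1/p}{1 - 1/(p-1)}.$$
By Mertens' theorem the numerator is $\sim c/\log x$ for an absolute constant $c > 0$, while the final product consists of factors $1 - 1/(p(p-1))$ and thus converges absolutely to a positive constant. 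Combined with the sieve lower bound, this gives the required bound on $S(\mathcal{A},\mathcal{P},x^{\delta})$.

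The only delicate point is invoking Rosser--Iwaniec with a ``bifurcated'' density taking two different forms on $\mathcal{P}^{(1)}$ and $\mathcal{P}^{(2)}$; checking the dimension-$1$ condition is nonetheless immediate because both local densities agree with $1/p$ to second order. Note that $s = 2+\iota$ is only slightly above the critical threshold $s = 2$ of the linear sieve, so $f(2+\iota)$ degenerates as $\iota \to 0$, but this is harmless because all implicit constants in the section are permitted to depend on $\iota$.
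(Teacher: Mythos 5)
Your proposal is correct and takes essentially the same route as the paper: both apply the Rosser--Iwaniec linear sieve with density $\omega(p)=p/g(p)$, sifting level $z=x^{\delta}$, sieve level $x^{(2+\iota)\delta}$ (so $s=2+\iota$), the level-of-distribution hypothesis to absorb the remainder, and a Mertens-type evaluation of the singular product, the only cosmetic difference being that the paper bounds $V(z)\ge U(z)=\prod_{p\mid\mathcal{P}(z)}\bigl(1-\tfrac{1}{p-1}\bigr)$ rather than decomposing $V$ directly. (One harmless slip: the correction factors over $\mathcal{P}^{(2)}$ are $\tfrac{1-1/p}{1-1/(p-1)}=1+\tfrac{1}{p(p-2)}>1$, not $1-\tfrac{1}{p(p-1)}$, which only makes your lower bound easier.)
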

\begin{proof}
This is a consequence of Theorem 1 of \cite{I:1980}, taking $s=2+\iota$, $\omega(p)=\frac{p}{g(p)}$, and $\kappa=1$. We set $$U(z)=\prod_{p\mid \mathcal{P}(z)}\Big(1-\frac{1}{p-1}\Big)$$ and $$V(z)=\prod_{p\mid \mathcal{P}(z)}\Big(1-\frac{1}{g(p)}\Big).$$ 
Now, $$\log z\prod_{\substack{p\mid\mathcal{P}'(z)\\ p>2}}\Big(1-\frac{1}{p-1}\Big)U(z)=\log z\prod_{2<p\leq z}\Big(1-\frac{1}{p-1}\Big)$$ approaches some constant as $z\to\infty$. Therefore, $$\vert\mathcal{A}\vert V(z)
\geq \vert\mathcal{A}\vert U(z)
\gg  \prod_{\substack{p\mid\mathcal{P}'(z)\\ p>2}}\Big(1-\frac{1}{p-1}\Big)^{-1}\frac{\vert\mathcal{A}\vert}{\log x}$$ for $z=x^{\delta}$. This shows that the claimed lower bound in this lemma holds when considering solely the main term in Theorem 1 of \cite{I:1980}. Since the error term is $$\sum_{\substack{d<x^{(2+\iota)\delta}\\ d\mid \mathcal{P}(x^{\delta})}}R_d,$$ the lemma follows by using the level of distribution of $\mathcal{A}$.
\end{proof}
\begin{lemma}\label{rosser}
Suppose $\delta$ and $\epsilon$ are positive constants. Suppose $d\leq x^{\delta}$ is an even number such that \begin{equation}\label{bveq}\sum_{\substack{q\leq x^{\delta+2(1+\iota)\epsilon}\\ d\mid q}}\max_{(a,q)=1}\Big\vert\pi(dx; q,a)-\frac{\pi(dx)}{\phi(q)}\Big\vert\leq \frac{x}{\log^5x}.\end{equation} Suppose $b$ and $r\ll \log x$ are integers such that $$(bd+1,r)=(b,r)=1.$$ Finally, suppose $\phi(d)\gg d$. Then $$\#\{k\leq x: k\equiv b\lessspacepmod{r}, dk+1\in\mathbb{P}, \emph{P}^-(k)\geq x^{\epsilon}\}\gg \frac{x}{\phi(r)\log^2 x}.$$
\end{lemma}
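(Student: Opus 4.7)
My plan is to apply the weighted linear sieve result of Lemma \ref{realrosser} to the set
$$\mathcal{A}=\{k\leq x:k\equiv b\lessspacepmod{r},\ dk+1\in\mathbb{P}\}.$$
Because $\text{P}^-(k)\geq x^\epsilon>2$ forces $k$ odd, I would first reduce to the case that $r$ is even: if it is odd, replace $r$ by $2r$ and $b$ by its unique odd lift modulo $2r$, observing that $(bd+1,2r)=1$ still holds (since $d$ even makes $bd+1$ odd) and that $\phi(2r)=\phi(r)$ and $2r\ll\log x$. Then take $\mathcal{P}'=\{p:p\mid r\}$, $\mathcal{P}=\mathbb{P}\setminus\mathcal{P}'$, $\mathcal{P}^{(2)}=\{p\in\mathcal{P}:p\mid d\}$, and $\mathcal{P}^{(1)}=\mathcal{P}\setminus\mathcal{P}^{(2)}$, giving $g(p)=p$ on $\mathcal{P}^{(2)}$ and $g(p)=p-1$ elsewhere. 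Since $(k,r)=1$ for every $k\in\mathcal{A}$, the sifted quantity $S(\mathcal{A},\mathcal{P},x^\epsilon)$ is exactly the number of $k\in\mathcal{A}$ with $\text{P}^-(k)\geq x^\epsilon$, which is what the lemma counts.

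To verify the hypotheses: By CRT, $\mathcal{A}$ is in bijection with primes $dk+1\leq dx+1$ in the progression $db+1\lessspacepmod{dr}$, a valid unit class by $(bd+1,r)=1$. Applying (\ref{bveq}) to the single modulus $q=dr$ yields $|\mathcal{A}|=\pi(dx)/\phi(dr)+O(x/\log^5x)\gg x/(r\log x)\gg x/\log^2 x$, using $\phi(dr)\leq dr$ and $r\ll\log x$. For the level of distribution $(2+\iota)\epsilon$, fix a square-free $e\leq x^{(2+\iota)\epsilon}$ coprime to $\mathcal{P}'$ and split $e=e_1e_2$ with $e_1=(e,d)$. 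The conditions $e\mid k$ and $k\equiv b\lessspacepmod{r}$ place $dk+1$ in a specific unit class modulo $der$; using $\phi(de_1)=e_1\phi(d)$ for $e_1$ square-free dividing $d$, together with $(e_2,de_1r)=1$, I get the key identity $\phi(der)=e_1\phi(e_2)\phi(dr)=g(e)\phi(dr)$. Writing
$$R_e=\Bigl[\pi(dx;der,a_e)-\frac{\pi(dx)}{\phi(der)}\Bigr]-\frac{1}{g(e)}\Bigl[\pi(dx;dr,db+1)-\frac{\pi(dx)}{\phi(dr)}\Bigr]+O(1),$$
the moduli $der$ are distinct multiples of $d$ of size at most $x^\delta\cdot x^{(2+\iota)\epsilon}\log x\leq x^{\delta+2(1+\iota)\epsilon}$ for $x$ large, so by (\ref{bveq}) the first bracket sums to $\ll x/\log^5 x$. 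The second sum is bounded by $(x/\log^5x)\sum_{e}1/g(e)\ll(x/\log^5 x)\log x$, again using (\ref{bveq}) at the modulus $dr$. Both are $\ll x/\log^4 x$, so the level of distribution holds.

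Lemma \ref{realrosser} then produces
$$S(\mathcal{A},\mathcal{P},x^\epsilon)\gg\prod_{\substack{p\mid r\\p>2}}\frac{p-1}{p-2}\cdot\frac{|\mathcal{A}|}{\log x}\gg\prod_{\substack{p\mid r\\p>2}}\frac{p-1}{p-2}\cdot\frac{dx}{\phi(dr)\log^2 x}.$$
Using $\phi(dr)=\phi(d)\phi(r)(d,r)/\phi((d,r))$, the hypothesis $\phi(d)\gg d$, and the elementary bound
$$\prod_{\substack{p\mid r\\p>2}}\frac{p-1}{p-2}\cdot\frac{\phi((d,r))}{(d,r)}\gg 1$$
(each odd prime of $r$ contributes a factor $\geq 1$, while the sole prime $p=2$ of $(d,r)$ contributes the absolute constant $1/2$), I conclude $S(\mathcal{A},\mathcal{P},x^\epsilon)\gg x/(\phi(r)\log^2 x)$, which is the claimed bound.

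The main bookkeeping step I expect is matching $\phi(der)$ with the Lemma \ref{realrosser} multiplicative sieve density $g(e)\phi(dr)$, which is nontrivial because $(e,d)>1$ prevents $der$ from factoring as a product of coprime pieces in the naive way. The splitting $e=e_1e_2$ with $e_1=(e,d)$ and $(e_2,de_1r)=1$ makes the identity $\phi(der)=e_1\phi(e_2)\phi(dr)$ transparent and precisely matches $g(e)=e_1\phi(e_2)$, after which the rest of the argument is a routine application of the Bombieri--Vinogradov input (\ref{bveq}) to both the main modulus $dr$ and the sieve moduli $der$.
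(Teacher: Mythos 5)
Your proposal is correct and follows essentially the same route as the paper's proof: the same application of Lemma \ref{realrosser} with $\mathcal{P}'=\{p:p\mid r\}$ and $\mathcal{P}^{(2)}=\{p:p\mid d,\,p\nmid r\}$, the same identity $\phi(der)=g(e)\phi(dr)$ (which the paper writes as $1/\phi(der)=\phi(d)/(\phi(de)\phi(dr))$), and the same use of \eqref{bveq} both at the single modulus $dr$ and summed over the sieve moduli $der$. Your final bookkeeping via $\phi(dr)=\phi(d)\phi(r)(d,r)/\phi((d,r))$ and $\phi(d)\gg d$ is a slightly more roundabout but valid version of the paper's direct observation that $\pi(dx)/\phi(dr)\gg x/(\phi(r)\log x)$.
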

\begin{proof}
We may assume $r$ is even and $b$ is odd.
Let $$\mathcal{A}=\{k<x: k\equiv b\lessspacepmod{r}, dk+1\in\mathbb{P}\},$$ $$\mathcal{P}^{(1)}=\{p: p\nmid dr\},$$ 
$$\mathcal{P}^{(2)}=\{p: p\mid d, p\nmid r\},$$ and $$\mathcal{P}'=\{p:p\mid r\}.$$ If $e$ is relatively prime to $r$ then $$R_e=\pi(dx; der, a)-\frac{\phi(d)}{\phi(de)}\pi(dx; dr, bd+1)$$ where $a\equiv bd+1\lessspacepmod{dr}$ and $a\equiv 1\lessspacepmod{e}$.

Therefore, we can bound $\vert R_e\vert$ by $$\max_{(a,der)=1}\Big\vert\pi(dx; der,a)-\frac{\pi(dx)}{\phi(der)}\Big\vert+\frac{\phi(d)}{\phi(de)}\max_{(a,dr)=1}\Big\vert\pi(dx; dr,a)-\frac{\pi(dx)}{\phi(dr)}\Big\vert,$$ 
where we have used the identity 
$$\frac 1{\phi(der)} = \frac{\phi(d)}{\phi(de)\phi(dr)}$$
which follows from $(e,r)=1$.
Consequently, using \eqref{bveq},
$$\sum_{\substack{e<x^{(2+\iota)\epsilon}\\ e\mid \mathcal{P}(x^{\epsilon})}}\vert R_e\vert\leq \frac{x}{\log^5 x}+\frac{x}{\log^5 x}\sum_{e<x^{(2+\iota)\epsilon}}\frac{\mu^2(e)}{\phi(e)}\ll \frac{x}{\log^4x}.$$ Now, $$\Big\vert\pi(dx; dr,db+1)-\frac{\pi(dx)}{\phi(dr)}\Big\vert\leq \frac{x}{\log^5 x}.$$ Since $$\frac{\pi(dx)}{\phi(dr)}\gg \frac{x}{\phi(r)\log x}\gg \frac{x}{\log^2 x},$$ this means that $\vert\mathcal{A}\vert\gg \frac{x}{\phi(r)\log x}$. By Lemma \ref{realrosser}, \begin{align*}\#\{k\leq x: k\equiv b\lessspacepmod{r}, dk+1\in\mathbb{P}, \text{P}^-(k)\geq x^{\epsilon}\}&\gg \frac{\vert\mathcal{A}\vert}{\log x}\\&\gg \frac{x}{\phi(r)\log^2 x}.\end{align*}

\end{proof}
As is usual in sieve theory, getting a good upper bound is much easier.
\begin{lemma}\label{Selberg}
For all $d$, $$\#\{k\leq x: dk+1\in\mathbb{P}, \emph{P}^-(k)\geq x^{\iota}\}\ll \prod_{\substack{p\mid d\\ p>2}}\Big(1-\frac{2}{p}\Big)^{-1}\frac{x}{\log^2 x}.$$
\end{lemma}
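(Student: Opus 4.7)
The plan is to apply Selberg's upper bound sieve to the set $\mathcal{A}=\{k\leq x:\,dk+1\in\mathbb{P}\}$, sifting out elements having a prime factor $\leq z:=x^{\iota}$. First, the case of odd $d$ is trivial: $dk+1$ is even when $k$ is odd, so $dk+1\in\mathbb{P}$ forces $2\mid k$, contradicting $\text{P}^-(k)\geq x^{\iota}$ for large $x$. Thus the set is empty and we may henceforth assume $2\mid d$. For squarefree $e\leq z^{2}=x^{2\iota}$, the quantity $|\mathcal{A}_{e}|$ counts primes in a single residue class modulo $de/(d,e)$ up to $dx+1$, and Brun--Titchmarsh (applicable since $de\ll x^{1-\iota}$) yields $|\mathcal{A}_{e}|\leq X/g(e)+R_{e}$, where $X=dx/(\phi(d)\log x)$ and $g$ is the multiplicative function with $g(\ell)=\ell-1$ for $\ell\nmid d$ and $g(\ell)=\ell$ for $\ell\mid d$. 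The remainders $R_{e}$ are controlled on average by Bombieri--Vinogradov as long as $\iota<1/4$, so the Selberg sieve hypotheses are met.

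Selberg's inequality then gives $S(\mathcal{A},z)\ll X/G(z)$ with
$$G(z)\;\geq\;\prod_{\ell\leq z}\Bigl(1+\frac{1}{g(\ell)-1}\Bigr)\;=\;2\prod_{\substack{2<\ell\leq z\\\ell\nmid d}}\frac{\ell-1}{\ell-2}\,\prod_{\substack{\ell\mid d\\\ell>2}}\frac{\ell}{\ell-1}.$$
By Mertens' theorem $\prod_{2<\ell\leq z}(\ell-1)/(\ell-2)\gg\log x$, and extracting the primes dividing $d$ from this product yields
$$G(z)\;\gg\;\log x\cdot\prod_{\ell\mid d,\,\ell>2}\frac{\ell(\ell-2)}{(\ell-1)^{2}}\;\geq\;\log x\cdot\prod_{\ell\mid d,\,\ell>2}\frac{\ell-2}{\ell-1}.$$

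Combining these estimates and using the identity $d/\phi(d)=2\prod_{\ell\mid d,\,\ell>2}\ell/(\ell-1)$ (valid since $2\mid d$), everything telescopes to
$$S(\mathcal{A},z)\;\ll\;\frac{x}{\log^{2}x}\prod_{\ell\mid d,\,\ell>2}\Bigl(1-\frac{2}{\ell}\Bigr)^{-1},$$
which is the desired bound. The only delicate point, as the paper itself signals by remarking that upper bounds are easy, is the careful bookkeeping that makes the local factors at primes dividing $d$ collapse into the clean form $\prod_{p\mid d,\,p>2}(1-2/p)^{-1}$; the restriction $p>2$ reflects the absorption of $\ell=2$ into the implied constant once we have reduced to $2\mid d$.
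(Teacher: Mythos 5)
Your plan has a structural gap that the paper's own argument is specifically designed to avoid. You condition on $dk+1$ being prime first, and then run a one-dimensional Selberg sieve on $k$, feeding in Brun--Titchmarsh for the main terms and Bombieri--Vinogradov for the remainders. Two things go wrong. First, Brun--Titchmarsh only gives one-sided bounds $\vert\mathcal{A}_e\vert\leq C\,X/g(e)$ with a constant loss; that cannot be inserted into the Selberg quadratic form $\sum_{e_1,e_2}\lambda_{e_1}\lambda_{e_2}\vert\mathcal{A}_{[e_1,e_2]}\vert$, because the weights $\lambda_{e_1}\lambda_{e_2}$ have mixed signs. What the sieve actually needs is a two-sided approximation $\vert\mathcal{A}_e\vert=X/g(e)+R_e$ with $R_e$ small on average, i.e.\ genuine equidistribution of the primes $p\equiv 1\lessspacepmod{d}$ in progressions modulo $de$. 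Second, even granting that, Bombieri--Vinogradov only reaches moduli up to about $(dx)^{1/2}$, so your argument would require $de\ll (dx)^{1/2}$, forcing $d$ to be at most roughly $x^{1-4\iota}$. But the lemma is asserted for \emph{all} $d$, and this uniformity is essential: in Section \ref{k-constr} the lemma is applied with $d$ replaced by $2pd$ and $x$ replaced by $x/(2p)$, where $p$ can be nearly $x^{1-\iota}$, so the modulus dwarfs the sifting range and no level-of-distribution input of Bombieri--Vinogradov type is available.

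The paper sidesteps all of this by sieving the two-dimensional sequence $\mathcal{A}=\{k(dk+1):k\leq x\}$, removing primes $p\leq x^{\iota}$ with $p\nmid d$. Each such $p$ excludes at most two residue classes of $k$ mod $p$, so $\vert\mathcal{A}_e\vert=2x/e+O(1)$ holds trivially and uniformly in $d$, with no arithmetic information about primes in progressions at all; the sieve has dimension $2$, and $G(z)\gg\prod_{p\mid d,\,p>2}\frac{p-2}{p}\log^2 z$ already produces the full $\log^{-2}x$ saving together with the local factors $(1-2/p)^{-1}$. In other words, the condition ``$dk+1$ prime'' is not imposed beforehand but is weakened to ``$dk+1$ has no prime factor $\leq x^{\iota}$'' and absorbed into the same sieve as the condition on $k$. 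If you want to repair your write-up, this change of underlying sequence (from $\{k: dk+1\in\mathbb{P}\}$ to $\{k(dk+1)\}$) is the key missing idea; the log bookkeeping you did at the end is fine, but the hypotheses you need for the sieve are simply not available on your route.
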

\begin{proof}
We assume that $d$ is even, as otherwise, this bound is trivial. Let $\mathcal{A}=\{k(dk+1): k\leq x\}$ and $\mathcal{P}=\mathbb{P}\setminus\mathcal{P}'$ where $\mathcal{P}'=\{p: p\mid d\}$. Now $$\mathcal{A}_e=\{k(dk+1): k\leq x, k\equiv 0\text{ or }-d^{-1}\lessspacepmod{e}\}$$ so $$\Big\vert\vert\mathcal{A}_e\vert-\frac{2x}{e}\Big\vert\leq 2.$$ The Selberg sieve (for instance, Theorem 7.14 of \cite{FI:2010}) tells us that $$S(\mathcal{A},\mathcal{P},z)\leq \frac{\vert\mathcal{A}\vert}{G(z)}+O\Big(\frac{z^2}{\log^2 z}\Big)$$ where $$G(z)=\sum_{\substack{n\mid \mathcal{P}(z)\\ n\leq z}}\prod_{p\mid n}\frac{2}{p-2}.$$ With some manipulation, one finds that $$G(z)\gg \prod_{\substack{p\mid d\\ p>2}}\frac{p-2}{p}\log^2 z.$$ (See, for instance, (7.141) of \cite{FI:2010}.) Thus $$S(\mathcal{A},\mathcal{P},z)\ll \prod_{\substack{p\mid d\\ p>2}}\Big(1-\frac{2}{p}\Big)^{-1}\frac{x}{\log^2 x}$$ for $z=x^{\iota}$.
\end{proof}
Our following three lemmas give bounds for a certain expression which is closely related to the Bombieri-Vinogradov theorem.
\begin{lemma}\label{bv1}
$$\sum_{q\leq x^{1/6}}\frac{q}{\phi(q)}\sum_{\chi\emph{ mod }q}^{\star}\max_{x'\leq x}\vert \psi'(x', \chi)\vert\ll x\log^4 x.$$
\end{lemma}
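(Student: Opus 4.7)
The plan is to invoke a standard Bombieri--Vinogradov-type bound for character sums, specialized to the range $Q = x^{1/6}$ where we can tolerate losses of logarithms rather than needing to save them. The relevant general estimate, valid for every $Q \geq 1$, is
$$\sum_{q\leq Q}\frac{q}{\phi(q)}\sum_{\chi\text{ mod }q}^{\star}\max_{x'\leq x}|\psi'(x',\chi)| \ll (x + x^{5/6}Q + x^{1/2}Q^2)(\log xQ)^4;$$
for $Q = x^{1/6}$ the three terms become $x$, $x$, and $x^{5/6}$ respectively, giving $\ll x\log^4 x$ as desired.

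To prove this general estimate I would proceed by Vaughan's identity and the multiplicative large sieve. First, decompose $\Lambda(n)$ using Vaughan's identity into a sum of bilinear forms, grouped into Type I sums of the form $\sum_{md \leq x}\alpha_d\chi(md)$ with $d \leq D$ and Type II sums $\sum_{mn\leq x}\alpha_m\beta_n\chi(mn)$ with $m$, $n$ in prescribed dyadic ranges. The Type I sums are handled by executing the $m$-sum directly (using the Polya--Vinogradov bound $|\sum_{m\leq M}\chi(m)|\ll \sqrt{q}\log q$ for non-principal $\chi$ induced from modulus $q$) and then summing trivially over $d$, which contributes $\ll (x + x^{1/2}Q^2)\log^A x$ in our setting.

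For the Type II sums, I would apply the multiplicative large sieve inequality
$$\sum_{q\leq Q}\frac{q}{\phi(q)}\sum_{\chi\text{ mod }q}^{\star}\Big|\sum_{n\leq N}a_n\chi(n)\Big|^2 \ll (Q^2 + N)\sum_{n\leq N}|a_n|^2$$
separately to the $m$- and $n$-sums after a Cauchy--Schwarz split, producing a product bound of the form $\ll (Q^2 + M)^{1/2}(Q^2 + N)^{1/2}(MN)^{1/2}\log^C x$ with $MN \sim x$. After summing over the $O(\log^2 x)$ dyadic decompositions in the ranges permitted by Vaughan's identity, this yields precisely the three-term bound above. The maximum over $x' \leq x$ is then removed via Gallagher's maximal version of the large sieve, or alternatively a partial-summation argument using a smooth partition of unity, each costing only a logarithmic factor.

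The main obstacle, such as it is, is bookkeeping: one must verify that every bilinear piece produced by Vaughan's identity, for each dyadic choice of $M$ and $N$, respects the stated bound when $Q \leq x^{1/6}$. Since $x^{1/6}$ is far from the Bombieri--Vinogradov threshold of roughly $x^{1/2}$, the bounds are loose enough that no sharp balancing is required, and each term in the decomposition contributes $\ll x\log^4 x$ comfortably. Indeed, the lemma can equivalently be obtained by directly quoting Theorem 17.4 of Iwaniec--Kowalski.
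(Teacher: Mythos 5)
Your proposal is correct and follows essentially the same route as the paper, which simply cites the standard Vaughan's-identity bound $\sum_{q\leq Q}\frac{q}{\phi(q)}\sum_{\chi}^{\star}\max_{x'\leq x}|\psi'(x',\chi)|\ll (x+x^{5/6}Q+x^{1/2}Q^2)\log^4(Qx)$ (equation (2) of Chapter 28 of Davenport) and specializes to $Q=x^{1/6}$. The only difference is that you additionally sketch the proof of that inequality (Vaughan decomposition, P\'olya--Vinogradov for Type I, large sieve for Type II), which the paper leaves to the reference.
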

\begin{proof}
This is an immediate consequence of Vaughan's inequality. (See equation (2) of Chapter 28 of \cite{D:1980}.)
\end{proof}
\begin{lemma}\label{bvugh}
For all $A$, and for sufficiently large $x$ in terms of $A$, $$\sum_{q\leq \log^A x}\frac{q}{\phi(q)}\sum_{\chi\emph{ mod }q}^{\star}\max_{x'\leq x}\vert \psi'(x', \chi)\vert\ll \frac{x}{\log^A x}.$$
\end{lemma}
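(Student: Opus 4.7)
The plan is to invoke the Siegel--Walfisz theorem, which supplies exactly the kind of pointwise bound on $\psi'(x',\chi)$ we need when the modulus $q$ is polylogarithmic in $x$. Recall that Siegel--Walfisz states: for every fixed $B>0$ there is a constant $c_B>0$ (ineffective, owing to Siegel's theorem) such that for every primitive character $\chi$ mod $q$ with $q\leq \log^B x$,
$$\max_{x'\leq x}\vert\psi'(x',\chi)\vert \ll x\exp(-c_B\sqrt{\log x}).$$
This covers both the non-principal primitive characters and the single primitive character of conductor $1$, where $\psi'(x',\chi)=\psi(x')-x'$ and the Prime Number Theorem with classical error term gives the same bound.

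To prove the lemma, I would apply this with $B=A$ and then estimate the outer sum trivially. Since there are at most $\phi(q)$ primitive characters modulo $q$, the factor $\frac{q}{\phi(q)}\sum_{\chi\text{ mod }q}^{\star}1$ is at most $q$. Hence
$$\sum_{q\leq \log^A x}\frac{q}{\phi(q)}\sum_{\chi\text{ mod }q}^{\star}\max_{x'\leq x}\vert\psi'(x',\chi)\vert \;\ll\; x\exp(-c_A\sqrt{\log x})\sum_{q\leq \log^A x} q \;\ll\; x\log^{2A} x\cdot \exp(-c_A\sqrt{\log x}).$$
Finally, since $\sqrt{\log x}$ grows faster than any constant multiple of $\log\log x$, for $x$ sufficiently large in terms of $A$ we have $\log^{3A} x\cdot \exp(-c_A\sqrt{\log x})\leq 1$, which yields the desired bound $\ll x/\log^A x$.

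There is no genuine obstacle here: the lemma is really a repackaging of Siegel--Walfisz into a form structurally parallel to Lemma \ref{bv1}. The only minor subtlety is the standard ineffectivity of the constants, which is consistent with how the lemma is stated (the dependence on $A$ is allowed to be whatever it needs to be). Conceptually, the tradeoff relative to Lemma \ref{bv1} is that the range of $q$ shrinks dramatically (from $x^{1/6}$ down to $\log^A x$), but the savings on the right-hand side become arbitrarily strong rather than a fixed power of $\log x$, which is exactly what will be needed later in the construction.
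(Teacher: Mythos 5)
Your argument is correct and is essentially the paper's own proof: the lemma is deduced directly from the Siegel--Walfisz theorem (equation (4) of Chapter 22 of \cite{D:1980}), with the sum over $q\leq \log^A x$ handled trivially since the saving $\exp(-c_A\sqrt{\log x})$ beats any fixed power of $\log x$. The only point you gloss over --- that the maximum over $x'\leq x$ requires either the trivial bound for very small $x'$ or Siegel--Walfisz at $x'$ with a slightly larger exponent --- is a standard technicality and does not affect the conclusion.
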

\begin{proof}
This follows immediately from the Siegel-Walfisz theorem (for instance, equation (4) of Chapter 22 of \cite{D:1980}).
\end{proof}
\begin{lemma}\label{bv2}
For some absolute constant $c>0$, we can associate to every integer $x$ an integer $s(x)$ with the property that $$\sum_{\substack{q\leq e^{c\sqrt{\log x}}\\ s(x)\nmid q}}\frac{q}{\phi(q)}\sum_{\chi\emph{ mod }q}^{\star}\max_{x'\in [\sqrt x,x]}
\frac{\vert \psi'(x', \chi)\vert}{x'}\leq \frac{1}{e^{c\sqrt{\log x}}}$$ 
and with $s(x)$ greater than $\log^A x$ for $x$ sufficiently large in terms of $A$.
\end{lemma}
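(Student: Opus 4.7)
The plan is to deduce the lemma from the standard zero-free region for Dirichlet $L$-functions combined with Landau's theorem on exceptional real zeros, using Siegel's theorem to control the modulus of the potential Siegel zero. Fix a small absolute constant $c_1 > 0$ and set $Q = e^{c_1\sqrt{\log x}}$. By Landau's theorem, there is an absolute constant $c_0 > 0$ such that at most one primitive character $\chi^*$, necessarily real, of conductor $q^* \leq Q$ has $L(s, \chi^*)$ with a real zero $\beta^* > 1 - c_0/\log Q$. I would define $s(x) = q^*$ when this exceptional pair exists, and $s(x) = 2\lceil Q \rceil + 1$ otherwise. The condition $s(x)\nmid q$ then excludes the exceptional conductor (if any) from the sum; over-excluding other multiples of $s(x)$ only strengthens the upper bound to be proved.

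For each primitive $\chi$ mod $q \leq Q$ with $q \neq q^*$ and for $|t| \leq T := e^{c_2\sqrt{\log x}}$ (with $c_2$ a constant slightly larger than $c_1$), $L(s, \chi)$ has no zeros in $\sigma > 1 - c_0/\log(QT)$. The truncated explicit formula then yields
$$|\psi(x', \chi)| \ll (x')^{1 - c_0/\log(QT)} \log^2(qT) + \frac{x' \log^2(qx')}{T} \ll x' \exp(-c_3 \sqrt{\log x})$$
uniformly for $x' \in [\sqrt{x}, x]$, where $c_3 > 0$ depends on $c_0$, $c_1$, $c_2$. Summing over the $\leq Q$ admissible values of $q$ and the at most $q$ primitive characters mod $q$, and using $q/\phi(q) \ll \log\log q$, the left-hand side of the claimed inequality is bounded by $O(Q^2 \log\log Q \cdot \exp(-c_3\sqrt{\log x}))$. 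Taking $c_1$ small enough that $c_3 > 2c_1$, this is at most $\exp(-c_1\sqrt{\log x})$, establishing the inequality with $c = c_1$.

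It remains to verify $s(x) > \log^A x$ for $x$ sufficiently large in terms of $A$. In the fallback case this is immediate since $s(x) > Q$. In the exceptional case, $s(x) = q^*$ and the definition forces $1 - \beta^* < c_0/(c_1\sqrt{\log x})$. Applying Siegel's theorem with $\epsilon = 1/(3A)$ produces an ineffective $c_\epsilon > 0$ with $1 - \beta^* \geq c_\epsilon (q^*)^{-1/(3A)}$; combining these two estimates gives $q^* \gg (\log x)^{3A/2}$, which exceeds $\log^A x$ for $x$ large enough (ineffectively) in terms of $A$.

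The only serious obstacle is the potential Siegel zero, for which Siegel's theorem is exactly tailored, at the cost of ineffectivity in the absolute constant $c$. Everything else is a routine application of the explicit formula with the classical zero-free region.
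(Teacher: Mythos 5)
Your proposal is correct and follows essentially the same route as the paper: the Landau--Page dichotomy defining $s(x)$ as the modulus of the at most one exceptional real zero at scale $1-O(1/\sqrt{\log x})$, Siegel's theorem (with a small $\epsilon$ depending on $A$) to force $s(x)>\log^A x$, and a zero-free-region bound on $\psi'(x',\chi)/x'$ for the non-exceptional characters. The only difference is that the paper cites Lemma 11.2 and (11.3) of \cite{LP:2019} for that last estimate, whereas you rederive it from the explicit formula; apart from trivial constant bookkeeping (you need roughly $c_3>3c_1$ to absorb the $Q^2$ factor, and the $q=1$ term should be read as $\psi'$ rather than $\psi$), the argument is sound.
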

\begin{proof}
This is very closely related to Lemma 11.2 of \cite{LP:2019}. There is some constant $c'>0$ such that for at most one primitive character $\chi$ whose modulus is bounded by $e^{\sqrt{\log x}}$, the associated $L$-function $L(z,\chi)$ has a real zero $\beta>1-\frac{c'}{\sqrt{\log x}}$. If such a character exists, $s(x)$ is defined to be its modulus. We assume that such a character does exist, as otherwise we need not worry about $s(x)$ at all. Siegel's theorem \cite{S:1935} tells us that for every $\epsilon>0$, there exists a constant $C(\epsilon)$ such that $\beta\leq 1-C(\epsilon)s(x)^{-\epsilon}$. We then have $\frac{\sqrt{\log x}}{c'}<\frac{s(x)^{\epsilon}}{C(\epsilon)}$. Take $\epsilon=\frac{1}{2A+2}$. Then $$s(x)>\Big(\frac{C(\frac{1}{2A+2})}{c'}\Big)^{2A+2}\log^{A+1} x>\log^A x$$ when $x$ is large.

Now,  for some absolute positive constant $c$, 
$$\frac{|\psi'(x',\chi)|}{x'}\ll e^{-4c\sqrt{\log x}}$$
if $q<e^{\sqrt{\log x}}$ and $s(x)\nmid q$; this is (11.3) of \cite{LP:2019}. The result now follows.
\end{proof}

Note that the previous two lemmas are ineffective when it comes to giving a specific lower bound for $x$ in terms of $A$. Let $\delta=\frac{1}{6}$. We then have the following version of the Bombieri-Vinogradov theorem. 
\begin{proposition}\label{bv3}
Let $N$ be a square-free number whose prime factors are bounded by $(\log x)^{\gamma}$ for some constant $\gamma>1$. Let $$\mathcal{D}=\{d\mid N: \omega(d)=w\},$$ for some positive integer $w<\omega(N)/2$. Let $t=x^{\delta-3\iota}$. We assume that $$(s(\lfloor tx\rfloor),N)<\frac{s(\lfloor tx\rfloor)}{\log^{2A}x}$$ for some integer $A$ and that every element of $\mathcal{D}$ is less than $t$. Then the value of $$\sum_{\substack{q\leq x^{\delta}\\ \W d\mid q}}\max_{(a,q)=1}\Big\vert\pi(dx; q,a)-\frac{\pi(dx)}{\phi(q)}\Big\vert$$ is less than $\frac{x}{\log^A x}$ for almost all $d\in\mathcal{D}$, assuming $x$ is sufficiently large in terms of $A$ and $\gamma$.
\end{proposition}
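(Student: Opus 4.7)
The plan is to bound $\sum_{d \in \mathcal{D}} F(d)$, where $F(d)$ denotes the sum in the proposition statement, and then deduce the conclusion by Markov's inequality. First I would apply the standard machinery: express $\max_{(a,q)=1}|\pi(dx;q,a)-\pi(dx)/\phi(q)|$ via orthogonality and partial summation in terms of $|\psi(y,\chi)|$ for non-principal characters $\chi$ mod $q$, convert each $\chi$ to the primitive character $\chi^*$ of conductor $q_1$ that induces it, and swap the order of summation so $q_1$ and $\chi^*$ are outermost. The inner sum $\sum_{q \le x^\delta,\,[2d,q_1]\mid q}1/\phi(q) \ll (\log x)/\phi([2d,q_1])$ admits the clean factorization $\phi([2d,q_1]) = \phi(2d)\phi(q_1)/\phi(\gcd(2d,q_1))$ because $d$ is squarefree.

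Summing over $d \in \mathcal{D}$, the pivotal quantity is the average $\mathbb{E}_d[\phi(\gcd(d,q_1))/\phi(d)] = \mathbb{E}_d[\prod_{p\mid d,\,p\nmid q_1}(p-1)^{-1}]$. An elementary symmetric function bound, using that $|\mathcal{D}| = \binom{\omega(N)}{w}$ is a lower bound $(\omega(N)/2)^w/w!$, shows this average is at most $(2(\omega(\gcd(q_1,N))+e_1)/\omega(N))^w$, where $e_1 = \sum_{p\mid N}1/(p-1) = O(\log\log\log x)$. Combined with the bound $\omega(\gcd(q_1,N)) \le \omega(q_1) \ll \log x/\log\log x$ valid for $q_1 \le x^\delta$, and the fact that $\omega(N) \gg w$ in the intended regime, this expectation is much smaller than any fixed negative power of $\log x$, in fact at most $x^{-c}$ for some positive absolute $c$. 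This is the main engine of the proof.

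I would then split the sum over $q_1$ into three ranges and apply the three Bombieri--Vinogradov-type lemmas above. For $q_1 \le (\log x)^{2A}$ use Lemma \ref{bvugh} (Siegel--Walfisz); for $(\log x)^{2A}<q_1\le e^{c\sqrt{\log x}}$ use Lemma \ref{bv2}; and for $e^{c\sqrt{\log x}}<q_1\le x^\delta$ use Lemma \ref{bv1} at the scale $X = x^{1+\delta-3\iota}$. In each range the $x^{-c}$ saving from the average combines with the BV-type bound to give a total contribution $\ll |\mathcal{D}|\cdot x/\log^{A+1}x$, after which Markov's inequality gives $F(d) \le x/\log^A x$ for all but $o(|\mathcal{D}|)$ of $d \in \mathcal{D}$.

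The main obstacle is the middle range, where the potential Siegel-type exceptional modulus $s(\lfloor tx\rfloor)$ would spoil Lemma \ref{bv2}. This is precisely the role of the hypothesis $(s(\lfloor tx\rfloor),N) < s(\lfloor tx\rfloor)/\log^{2A}x$: any $q$ with $2d\mid q$ and $s(\lfloor tx\rfloor)\mid q$ must be divisible by a factor of size at least $\log^{2A}x$ coprime to $2d$, so the number of such $q \le x^\delta$ is at most $x^\delta/(d\log^{2A}x)$, and their total contribution to $F(d)$ is handled by a straightforward Brun--Titchmarsh bound, remaining inside the acceptable error.
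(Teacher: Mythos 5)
Your skeleton matches the paper's: reduce to primitive characters, bound the average of $F(d)$ over $d\in\mathcal{D}$ and finish with Markov, split $q_1$ into ranges keyed to Lemmas \ref{bvugh}, \ref{bv2}, \ref{bv1}, and use the hypothesis on $(s(\lfloor tx\rfloor),N)$ to neutralize the exceptional modulus (your Brun--Titchmarsh treatment of the moduli divisible by both $2d$ and $s(\lfloor tx\rfloor)$ is a workable variant of the paper's trick of dumping those pairs into the large-gcd bucket). The gap is in your ``main engine.'' First, the claim $\mathbb{E}_d[\phi((d,q_1))/\phi(d)]\le x^{-c}$ does not follow from the hypotheses of the proposition: the only assumption relating $w$ and $N$ is $w<\omega(N)/2$, and if, say, $\omega(N)=2w+1$ while $q_1\le x^{\delta}$ shares about $w$ prime factors with $N$ (which is possible, since a product of $w$ prime factors of $N$ is an element of $\mathcal{D}$ and hence less than $t<x^{\delta}$), your symmetric-function bound degenerates to $\bigl(2(\omega((q_1,N))+e_1)/\omega(N)\bigr)^w\approx 1$. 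The extra input ``$\omega(N)\gg w$ in the intended regime'' (really: $\omega(N)$ exceeding $\omega(q_1)$ by a factor $x^{c/w}$) is not part of the statement being proved.

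Second, and more seriously, even with the paper's actual parameters the saving is quantitatively too small for the way you deploy it. The worst case over $q_1\le x^{\delta}$ of your expectation is genuinely of size about $1/\vert\mathcal{D}\vert\approx x^{-\rho(1-\iota)(\delta-3\iota)}$, roughly $x^{-1/144}$ (take $q_1$ a multiple of a single element $d_0\in\mathcal{D}$: that one $d=d_0$ alone contributes $1$ to the sum), whereas replacing $\vert\psi'(dx,\chi)\vert$ by $\max_{x'\le tx}\vert\psi'(x',\chi)\vert$ and invoking Lemma \ref{bv1} at scale $X=x^{1+\delta-3\iota}$ costs a factor of order $t=x^{\delta-3\iota}\approx x^{1/6}$ relative to the target $\vert\mathcal{D}\vert x/\log^{A+1}x$; so each range comes out as $\gg\vert\mathcal{D}\vert x^{1+\delta-3\iota-c}$ with $c\approx 1/144$, too large by a positive power of $x$. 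Nor can you fix this by averaging $d\cdot\phi((2d,q_1))/\phi(2d)\approx(2d,q_1)$ instead: for the bad $q_1$ above that average is at least $t/\vert\mathcal{D}\vert$, again a positive power of $x$. The paper's proof is built precisely to avoid this decoupling: it never detaches the $1/d$ weight from $\vert\psi'(dx,\chi)\vert$ (at the end it sums $\max_{x\le x'\le tx}\frac{x}{x'}\vert\psi'(x',\chi)\vert$ dyadically, paying only logarithms, not a factor $t$), and it dichotomizes on the size of $(q_1,2d)$: when $(q_1,2d)\le 2q_1/\log^{2A}x$ the gcd itself supplies the $\log^{-2A}$ saving alongside Lemma \ref{bv1}, and when the gcd is larger, $d$ must be divisible by a divisor $d'\mid q_1$ with $\omega(d')\gg\sqrt{\log x}/\log\log x$ (this is where the bound $(\log x)^{\gamma}$ on the prime factors of $N$ enters), whence $\#\{d\in\mathcal{D}:d'\mid d\}<\vert\mathcal{D}\vert 2^{-\omega(d')}$ --- the only place $w<\omega(N)/2$ is used --- so such pairs are negligible. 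Some dichotomy of this type (rare $d$ with enormous gcd versus an honest logarithmic-power saving otherwise) is needed; a uniform-in-$q_1$ saving of a power of $x$ from averaging over $d$ is simply not available.
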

\begin{proof}
We abbreviate $s(\lfloor tx\rfloor)$ as just $s$. Our goal will be to show that $$\sum_{d\in \mathcal{D}}\sum_{\substack{q\leq x^{\delta}\\ \W d\mid q}}\max_{(a,q)=1}\Big\vert\psi(dx; q,a)-\frac{dx}{\phi(q)}\Big\vert$$ is small. Using standard manipulations, this expression is asymptotically bounded by $$\WW\sum_{d\in \mathcal{D}}\dd\sum_{\substack{q\leq x^{\delta}\\ \W d\mid q}}\frac{1}{\phi(q)}\sum_{\chi\text{ mod }q}\vert \psi'(dx,\chi_1)\vert$$ plus an error term (which comes from switching from $\chi$ to its primitive counterpart $\chi_1$)
of $\WW \vert \mathcal{D}\vert x^{\delta}\log^2 x$. Now, 
\begin{align*}\sum_{d\in \mathcal{D}}\dd\sum_{\substack{q\leq x^{\delta}\\ \W d\mid q}}\frac{1}{\phi(q)}&\sum_{\chi\text{ mod }q}\vert \psi'(dx, \chi_1)\vert\\&=\sum_{d\in \mathcal{D}}\dd\sum_{q_1}\sum_{\substack{q\leq x^{\delta}\\ [q_1,\W d]\mid q}}\frac{1}{\phi(q)}\sum_{\chi\text{ mod }q_1}^{\star}\vert \psi'(dx, \chi)\vert\\&\leq \sum_{d\in \mathcal{D}}\dd\sum_{q_1}\frac{1}{\phi([q_1,\W d])}\sum_{\chi\text{ mod }q_1}^{\star}\vert \psi'(dx, \chi)\vert\sum_{q'\leq \frac{x^{\delta}}{[q_1,\W d]}}\frac{1}{\phi(q')}\\&\ll \log x\sum_{d\in \mathcal{D}}\dd\sum_{q_1\leq x^{\delta}}\frac{1}{\phi([q_1,\W d])}\sum_{\chi\text{ mod }q_1}^{\star}\vert \psi'(dx, \chi)\vert\\&\ll \log^2 x\sum_{d\in \mathcal{D}}\sum_{q_1\leq x^{\delta}}\frac{(q_1,\W d)}{\phi(q_1)}\frac{1}{d}\sum_{\chi\text{ mod }q_1}^{\star}\vert \psi'(dx, \chi)\vert,
\end{align*} 
where we have used the fact that $\phi(d)\gg d$ for $d\in \mathcal{D}$.
We divide the values of $d$ and $q_1$ into several (possibly overlapping) buckets. The first bucket is defined by the condition $q_1\leq \log^{2A}x$, and its contribution is bounded by $$\log^2 x\sum_{d\in\mathcal{D}}\sum_{q_1\leq \log^{2A}x}\frac{q_1}{\phi(q_1)}\frac{1}{d}\sum_{\chi\text{ mod }q_1}^{\star}\vert \psi'(dx, \chi)\vert\ll x\log^{2-2A}x\vert\mathcal{D}\vert$$ using Lemma \ref{bvugh}. The second bucket is defined by $q_1\leq e^{c\sqrt{\log x}}$ and $s\nmid q_1$, and its contribution is bounded by $$\log^2 x\sum_{d\in\mathcal{D}}\sum_{\substack{q_1\leq e^{c\sqrt{\log x}}\\ s\nmid q_1}}\frac{q_1}{\phi(q_1)}\frac{1}{d}\sum_{\chi\text{ mod }q_1}^{\star}\vert \psi'(dx, \chi)\vert\leq \log^2 x\frac{x}{e^{c\sqrt{\log x}}}\vert \mathcal{D}\vert$$ by Lemma \ref{bv2}. The third bucket is defined by $(q_1,2d)\leq\frac{2q_1}{\log^{2A}x}$, and its contribution is bounded by $$\log^{2-2A} x\sum_{d\in\mathcal{D}}\sum_{q_1\leq x^{\delta}}\frac{q_1}{\phi(q_1)}\frac{1}{d}\sum_{\chi\text{ mod }q_1}^{\star}\vert \psi'(dx, \chi)\vert\ll x\log^{6-2A}x\vert\mathcal{D}\vert$$ using Lemma \ref{bv1}. Note that by our hypothesis regarding $(s(\lfloor tx\rfloor),N)$, the pair $d$ and $q_1$ will be contained in this bucket if $\log^{2A}x<q_1\leq e^{c\sqrt{\log x}}$ and $s\mid q_1$. 
Thus, the remaining contribution is $$E:=\log^2 x\sum_{d\in \mathcal{D}}\sum_{\substack{e^{c\sqrt{\log x}}\leq q_1\leq x^{\delta}\\ (q_1,\W d)>\frac{2q_1}{\log^{2A}x}}}\frac{(q_1,\W d)}{\phi(q_1)}\frac{1}{d}\sum_{\chi\text{ mod }q_1}^{\star}\vert \psi'(dx, \chi)\vert.$$ We can bound this by $$\log^2 x\sum_{e^{c\sqrt{\log x}}\leq q_1\leq x^{\delta}}\frac{q_1}{\phi(q_1)}\sum_{\chi\text{ mod }q_1}^{\star}\max_{x\leq x'\leq tx}\frac{x}{x'}\vert \psi'(x', \chi)\vert D(q_1)$$ where $$D(q_1):=\#\Big\{d\in\mathcal{D}: (q_1,\W d)>\frac{2q_1}{\log^{2A}x}\Big\}.$$ Now, $$D(q_1)\leq \#\Big\{d'\in \mathcal{D}: d'\mid q_1, d'>\frac{2q_1}{\WW \log^{2A}x}\Big\}\max_{\substack{d'\in\mathcal{D}\\ d'>\frac{2q_1}{\WW \log^{2A}x}}}\#\big\{d\in \mathcal{D}: d'\mid d\big\}.$$ We have \begin{align*}\max_{\substack{d'\in\mathcal{D}\\ d'>\frac{2q_1}{\WW \log^{2A}x}}}\#\big\{d\in \mathcal{D}: d'\mid d\big\}&=\max_{\substack{d'\in\mathcal{D}\\ d'>\frac{2q_1}{\WW \log^{2A}x}}}\binom{\omega(N)-\omega(d')}{w-\omega(d')}\\&<\binom{\omega(N)}{w}\max_{\substack{d'\in\mathcal{D}\\ d'>\frac{2q_1}{\WW \log^{2A}x}}}2^{-\omega(d')},\end{align*} using the fact that $w<\omega(N)/2$.
If $d'>\frac{2q_1}{\WW \log^{2A}x}$ then $$\omega(d')>\frac{1}{\gamma\log\log x}\log\Big(\frac{2e^{c\sqrt{\log x}}}{\WW \log^{2A}x}\Big)>\log_2(e^{\sqrt[3]{\log x}})$$ for sufficiently large $x$. This means that $$\max_{\substack{d'\in\mathcal{D}\\ d'>\frac{2q_1}{\WW \log^{2A}x}}}\#\big\{d\in \mathcal{D}: d'\mid d\big\}<\frac{\vert\mathcal{D}\vert}{e^{\sqrt[3]{\log x}}}.$$ Also, $$\#\Big\{d'\in \mathcal{D}: d'\mid q_1, d'>\frac{2q_1}{\WW \log^{2A}x}\Big\}=\#\Big\{d'\mid (q_1,N), d'>\frac{2q_1}{\WW \log^{2A}x}\Big\}.$$ 
Considering the asymptotic behavior of primorials, we see that $(q_1,N)$ has at most $\log x$ factors, of which $d'$ can have lost at most $\log\log x$. 
Thus 
$$\#\Big\{d'\in \mathcal{D}: d'\mid q_1, d'>\frac{2q_1}{\WW \log^{2A}x}\Big\}<(\log x)^{\log\log x} = e^{(\log\log x)^2}.$$ 
We conclude that $$E\leq \log^2 x\frac{\vert\mathcal{D}\vert}{e^{\sqrt[3]{\log x}}}e^{(\log\log x)^2}\sum_{q_1\leq x^{\delta}}\frac{q_1}{\phi(q_1)}\sum_{\chi\text{ mod }q_1}^{\star}\max_{x\leq x'\leq tx}\frac{x}{x'}\vert \psi'(x', \chi)\vert.$$ Now, if $2^m$ is the smallest power of 2 greater than $t$ then $$\sum_{q_1\leq x^{\delta}}\frac{q_1}{\phi(q_1)}\sum_{\chi\text{ mod }q_1}^{\star}\max_{x\leq x'\leq tx}\frac{x}{x'}\vert \psi'(x', \chi)\vert$$ is asymptotically bounded by $$\sum_{i=1}^m\frac{1}{2^i}\sum_{q_1\leq x^{\delta}}\frac{q_1}{\phi(q_1)}\sum_{\chi\text{ mod }q_1}^{\star}\max_{2^{i-1}x\leq x'\leq 2^ix}\vert \psi'(x', \chi)\vert\ll x\log^5 x,$$ using Lemma \ref{bv1}. Putting everything together $$\sum_{d\in \mathcal{D}}\dd\sum_{\substack{q\leq x^{\delta}\\ \W d\mid q}}\max_{(a,q)=1}\Big\vert\psi(dx; q,a)-\frac{dx}{\phi(q)}\Big\vert\ll x\vert \mathcal{D}\vert\log^{6-2A}x.$$ Therefore, $$\sum_{\substack{q\leq x^{\delta}\\ \W d\mid q}}\max_{(a,q)=1}\Big\vert\psi(dx; q,a)-\frac{dx}{\phi(q)}\Big\vert\leq \frac{x}{\log^Ax}$$ for almost all $d\in\mathcal{D}$ if $A\geq 7$ (from which it follows for all $A$). Upon applying summation by parts to convert from $\psi$ to $\pi$, the stated result follows.
\end{proof}
Goldston, Pintz, and Y{\i}ld{\i}r{\i}m \cite{GPY:2010} prove a similar result which would imply that the desired inequality holds for all $d\in\mathcal{D}$. However, their result only holds for a sequence of values of $x$ which tend towards infinity, and so we use this averaged version for more regularity. For readers familiar with the AGP method, we will mention that this proposition will be used in lieu of Theorem 2.1 from \cite{AGP:1994}. Our result establishes stronger equidistribution, while their result lets the moduli come up to a larger power of $dx$ (5/12 instead of 1/6).
\section{Constructing $L$}\label{set-up}
We take $y$ to be a large parameter. The majority of this section will be devoted to proving the following proposition.
\begin{proposition}\label{set-up-prop}
There is a set $\mathcal{Q}\in \mathbb{P}\cap [1,y]$ and a value $\theta\geq \frac{1}{6}-\iota$ (which may depend on $y$) with the following properties:

\begin{enumerate}
\item $\vert \mathcal{Q}\vert\geq y^{\theta+\rho}\log^{-\kappa} y$
\item $m_0$ is a quadratic residue mod $q$ for each $q\in\mathcal{Q}$
\item $\frac{q-1}{2}$ is odd, square-free, and composite for each $q\in\mathcal{Q}$
\item There are $O(y^{\theta})$ primes $p$ such that $p\mid \frac{q-1}{2}$ for some $q\in\mathcal{Q}$
\item If $p\mid \frac{q-1}{2}$ with $q\in\mathcal{Q}$ then $p\geq y^{\theta(1+\iota)}$
\item For every prime $p$, $\#\{q\in \mathcal{Q}: p\mid \frac{q-1}{2}\}\leq y^{\rho}$
\item For every product $n$ of at most $y^{\rho}$ elements of $\mathcal{Q}$, for every non-principal character $\chi$ mod $n$, and for every real number $\beta$, there exist at least $\frac{y^\theta}{y^{2\iota}}$ elements $q\in\mathcal{Q}$ such that if $\beta_q$ is a real number with $\chi(q)=e(\beta_q)$ then $\vert \beta_q-\beta\vert\geq \frac{1}{2y^{\rho+\iota}}$
\end{enumerate}
\end{proposition}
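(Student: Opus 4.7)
The plan is to build $\mathcal{Q}$ as a random subset of a larger, structurally constrained pool of primes produced by sieve methods. First I would fix $\theta$ close to $1/6$ (perturbing within $[1/6 - \iota, 1/6]$ to secure the $s(\lfloor tx \rfloor)$ hypothesis of Proposition \ref{bv3}), and let $\mathcal{R}$ be the set of primes $p \in [y^{\theta(1+\iota)}, y^\theta]$ coprime to $2m_0$; by PNT, $|\mathcal{R}| \asymp y^\theta/\log y$. Let $\mathcal{D}$ consist of products $d = p_1 p_2$ of distinct $p_1, p_2 \in \mathcal{R}$, restricted to the residue classes modulo $4m_0$ which, via quadratic reciprocity, force $\left(\frac{m_0}{q}\right) = 1$ for the associated $q = 2d+1$ (oddness of $(q-1)/2$ is automatic since $d$ is odd). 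This bakes in properties (2), (3), and (5) at the cost of only a constant density factor.

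Next I would apply Proposition \ref{bv3} with $w = 2$ and $N = \prod_{p \in \mathcal{R}} p$, followed by Lemma \ref{rosser} with $r$ enforcing the reciprocity condition, to conclude that for almost all $d \in \mathcal{D}$ the number $q = 2d+1$ is prime, giving an initial pool $\mathcal{Q}_0$ of size $\gg y^{2\theta}/\log^{O(1)} y$. To obtain (1), (4), and (6), I would sample $\mathcal{Q} \subseteq \mathcal{Q}_0$ by retaining each element independently with probability $\sigma \asymp y^{\rho - \theta}\log^{-\kappa/2} y$, giving $\mathbb{E}|\mathcal{Q}| \asymp y^{\theta + \rho}\log^{-\kappa/2} y$. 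A standard Chernoff argument then shows that with probability $1 - o(1)$ the sample has the right order of magnitude and every $p \in \mathcal{R}$ appears in the factorization of $(q-1)/2$ for at most $O(y^\rho)$ many $q \in \mathcal{Q}$; property (4) follows from the total-incidence count $\sum_{q \in \mathcal{Q}} \omega((q-1)/2) = 2|\mathcal{Q}|$ being spread over at most $|\mathcal{R}| = O(y^\theta/\log y)$ primes.

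The main obstacle is property (7). I would fix a triple $(n, \chi, \beta)$ with $n$ a product of at most $y^\rho$ elements of $\mathcal{Q}$, $\chi$ non-principal mod $n$, and $\beta$ discretized to resolution $y^{-\rho - \iota}$, and then union-bound over triples. Since $(\mathbb{Z}/q\mathbb{Z})^\times$ for $q = 2 p_1 p_2 + 1$ is cyclic of order $2 p_1 p_2$, the characters factor along primes dividing $n$ with component orders drawn from $\{1, 2, p_1^{(i)}, p_2^{(i)}, \ldots\}$. The argument then splits: if the order of $\chi$ is at most $y^{\rho + \iota}$, then reducing to characters of small order (as in the $\chi \mapsto \chi^{\lambda(L)}$ trick previewed in the introduction) keeps the family manageable and Chernoff concentration on the subsample suffices; if the order is large, the value $\chi(2 p_1 p_2 + 1)$ depends nontrivially on $p_1, p_2$ modulo a prime of size $\geq y^{\theta(1+\iota)}$, and clustering into an arc of width $y^{-\rho - \iota}$ is obstructed by an incomplete character sum estimate derived from Proposition \ref{bv3}. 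The calibration $\rho = 1/24 - 2\iota$ appears to be arranged precisely so that the union-bound size, failure probability, and arc width balance, and verifying this interlocking is the delicate step.
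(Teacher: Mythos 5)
Your construction of the initial pool does not get off the ground. You ask for many primes $q\leq y$ with $\frac{q-1}{2}=p_1p_2$ a product of \emph{exactly two} primes, both of size about $y^{\theta}$, and you claim this follows from Proposition \ref{bv3} together with Lemma \ref{rosser}, even asserting that $2d+1$ is prime for \emph{almost all} $d=p_1p_2$. The latter is false on its face (primes have density about $1/\log y$ in any such sequence), and the former is a parity-problem statement that these tools cannot deliver: Lemma \ref{rosser} counts integers $k$ ranging over a full interval with $dk+1$ prime and $k$ merely free of small prime factors; it cannot restrict $k$ to primes, and no Bombieri--Vinogradov-type input proves a lower bound for primes $q$ whose shifted value $\frac{q-1}{2}$ has a prescribed factorization into exactly two large primes (this is of the same nature as the safe-prime/Sophie Germain problem). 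The paper avoids this entirely: in Lemma \ref{seed} it only demands that $\frac{q-1}{2}$ be \emph{rough} (all prime factors $\geq y^{1/6-\iota}$), which a linear-sieve lower bound does give, and the number $\ell$ of prime factors and the exponents $\theta_1\leq\cdots\leq\theta_\ell$ are extracted \emph{a posteriori} by dyadic decomposition, with $\theta$ defined from $\theta_1$ (so $\theta$ need not be near $1/6$). Property 4 is then obtained not from the pool being supported on few primes, but from a two-stage randomization: first each factor set is thinned to a random $\mathcal{P}_i$ of size about $y^{\theta}$, and only then are the surviving primes $q$ subsampled; your single-stage thinning only looked viable because of the infeasible ``exactly two factors of size $y^{\theta}$'' pool. (Minor: your interval $[y^{\theta(1+\iota)},y^{\theta}]$ is empty as written.)

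Your treatment of Property 7 is also missing the key mechanism. A union bound with Chernoff concentration over triples $(n,\chi,\beta)$ presupposes that, on the deterministic pool, the values $\chi(q)$ are not already clustered near a point; nothing rules this out a priori, and your proposed substitutes do not exist---Proposition \ref{bv3} controls $\pi(dx;q,a)$ on average over moduli up to $x^{\delta}$ and says nothing about character sums over the sparse set of pool primes, especially for moduli $n$ that are products of up to $y^{\rho}$ elements of $\mathcal{Q}$ and hence far larger than $y$. The paper's route is different: it applies the large sieve (Lemma \ref{largesieve}) to products of $2s$ pool primes to show that only about $y^{2s(\rho+o(1))}$ moduli $n\leq y^{s}$ are ``atypical'' (admit a clustering character), shows deterministically that for typical moduli the set $\mathcal{Q}^{\chi}_{\beta}$ is large inside $\mathcal{Q}^{(1)}$, and then uses the randomness of which products of sampled primes actually occur to conclude that the moduli arising from $\mathcal{Q}^{(3)}$ are very likely all typical; only after that does concentration of the subsample enter. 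Without this typical/atypical dichotomy (or some equivalent), both your small-order and large-order cases are unsupported, so the proposal has genuine gaps at both the construction of the pool and the equidistribution property.
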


Taken together, Properties 1, 4, and 6 give a good qualitative description of the group $(\mathbb{Z}/(\prod_{q\in\mathcal{Q}}q)\mathbb{Z})^{\times}$. The average number of copies of $\mathbb{Z}/p\mathbb{Z}$ (assuming there is at least one) is only slightly less than $y^{\rho}$, which is the maximal number of copies. This makes the group fairly even, in some sense. Clearly, the exponent of the group is considerably smaller than the order, as we want. Property 2 is important because it means that we won't have to worry about the 2-torsion of this group, which is the exception to the evenness. Property 3 is not particularly necessary but a simplifying assumption. Property 5 is strangely important; it ensures that each odd index subgroup has a fairly large index, meaning that a random element is quite unlikely to be in the subgroup. Finally, Property 7 is a weak equidistribution condition which is very important in ensuring that we can convert heuristic arguments into rigorous probabilistic ones.

To prove Proposition \ref{set-up-prop}, we will describe a probabilistic algorithm for generating $\mathcal{Q}$. If we can show that each property is satisfied with probability 99\% then there is a positive probability that all properties are satisfied. In fact, our method will show that each property is satisfied almost all the time. To be more precise, we say that an outcome is \emph{almost certain} if the probability that it doesn't occur is less than $e^{-y^\epsilon}$ for some $\epsilon>0$ which does not depend on $y$. 
We may further specify that the outcome is \emph{$\epsilon$-certain}, for such a value of $\epsilon$.
We also say that an outcome is \emph{very likely} if the probability that it doesn't occur is less than $e^{-(\log y)^{\epsilon}}$ for some $\epsilon>0$ which does not depend on $y$. Our construction will guarantee Properties 2, 3, and 5, and we will show that our set of primes satisfies Properties 1 and 4 almost certainly and very likely satisfies Properties 6 and 7. Everything in this section is in some sense overwhelmingly true; there is no subtlety, only technicalities to take care of.

Before beginning the proof of the proposition, we make the following combinatorial remark.  Suppose one chooses a number of objects each of which has an independent and identical probability of having a certain property.  If $y^\epsilon$ is less than the expected number of selected objects with the property,
then the number of  selected objects which actually possess the property will be at least $y^\epsilon/2$ with $\epsilon-\iota$-certainty.

\begin{proof}[Proof of Proposition \ref{set-up-prop}]
We begin with a lemma.
\begin{lemma}\label{seed}
When $y$ is sufficiently large,
there exist $\gg\frac{y}{\log^3y}$ primes $q\leq y$ such that $\frac{q-1}{2}$ is an odd square-free number with two or more prime factors, each of size at least $y^{1/6-\iota}$, and also such that $m_0$ is a quadratic residue mod $q$.
\end{lemma}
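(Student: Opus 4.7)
The plan is to construct the desired primes $q$ in the form $q = 2 p_1 k + 1$, where $p_1$ ranges over primes in a narrow interval just above $y^{1/6 - \iota}$ and $k$ is an integer with $P^-(k) \geq y^{1/6 - \iota}$. Such a factorization directly guarantees that $(q-1)/2 = p_1 k$ has at least two prime factors, all of size at least $y^{1/6-\iota}$; forcing $q \equiv 3 \pmod 4$ simultaneously makes $(q-1)/2$ odd. The congruence conditions ``$q \equiv 3 \pmod 4$'' and ``$m_0$ is a quadratic residue mod $q$'' can be imposed together by restricting $q$ to a suitable residue class $b_0 \pmod{4 m_0}$; by quadratic reciprocity and the Chinese remainder theorem, such $b_0$ exist and fill a fraction at least $2^{-\omega(m_0) - 1}$ of $(\mathbb{Z}/4m_0\mathbb{Z})^\times$.

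Fix a small auxiliary constant $\eta \ll \iota$ and let $I = [y^{1/6 - \iota},\, y^{1/6 - \eta}]$. For each prime $p_1 \in I$, consider the set $\mathcal{A} = \{(q-1)/(2 p_1) : q \leq y \text{ prime},\ q \equiv 1 \pmod{2p_1},\ q \equiv b_0 \pmod{4m_0}\}$; by Dirichlet's theorem, $|\mathcal{A}| \gg y/(p_1 \log y)$. Sieve $\mathcal{A}$ by the set $\mathcal P$ of odd primes coprime to $m_0 p_1$, so that $\mathcal P^{(1)} = \mathcal P$, $\mathcal P^{(2)} = \emptyset$, and $g(p) = p - 1$ for $p \in \mathcal P$, taking sifting parameter $z = y^{1/6 - \iota}$. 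With $x = y/(2p_1)$ and $\delta = \log z/\log x$, the required level of distribution $(2 + \iota)\delta = (1/3 + O(\iota))/(1 - \alpha)$, where $\alpha = \log p_1/\log y$, fits strictly under the Bombieri--Vinogradov level $(1/2 - \eta - \alpha)/(1 - \alpha)$ precisely because $\alpha \leq 1/6 - \eta$. Applying the standard BV theorem at level $y^{1/2 - \eta}$, combined with an averaging argument over $p_1 \in I$ (using that each modulus $m \leq y^{1/2 - \eta}$ admits at most $\omega(m) \ll \log y$ factorizations of the form $m = 2 p_1 n$ with $p_1 \in I$ prime) and Markov's inequality, shows that all but $o(|I \cap \mathbb P|)$ primes $p_1 \in I$ satisfy the hypothesis of Lemma~\ref{realrosser}. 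For each such ``good'' $p_1$, that lemma yields $S(\mathcal A, \mathcal P, z) \gg |\mathcal A|/\log y \gg y/(p_1 \log^2 y)$ primes $q$ of the desired form; since $q \equiv 3 \pmod 4$ makes $k = (q-1)/(2p_1)$ odd, the sifting condition is equivalent to $P^-(k) \geq z$.

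Summing over good $p_1 \in I$ and applying Mertens' theorem,
\[
\sum_{\substack{p_1 \in I\\ p_1\text{ prime}}} \frac{1}{p_1} \gg \log \frac{1/6 - \eta}{1/6 - \iota} \gg \iota - \eta,
\]
the total (with multiplicity) is $\gg (\iota - \eta)\, y/\log^2 y$. Each $q$ is counted at most $O(1/(1/6-\iota))= O(1)$ times, since this bounds the number of prime factors of $(q-1)/2$ that can lie in $I$, so the distinct count is still $\gg y/\log^2 y$, comfortably exceeding $y/\log^3 y$. The negligible exceptional cases where $(q-1)/2$ fails to be square-free (that is, $p_1 \mid k$ or $p^2 \mid k$ for some $p \geq z$) total only $O(y^{5/6})$ and are discarded. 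The principal technical obstacle is the averaging step that verifies Lemma~\ref{realrosser}'s hypothesis for almost all $p_1 \in I$: the standard BV bound $\ll y/\log^A y$ is not small enough when restricted to a single modulus $d = 2 p_1$ with $d \approx y^{1/6}$, so one must exploit the fact that the total BV error is distributed over the $\approx y^{1/6 - O(\iota)}$ primes in $I$ with bounded multiplicity.
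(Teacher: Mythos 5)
Your construction is essentially the paper's own proof: the paper also writes $q=2pk+1$ with $p$ a prime of size about $y^{1/6}$, forces $q\equiv -1\lessspacepmod{4m_0}$ so that quadratic reciprocity gives the residue condition and $(q-1)/2$ is odd, sieves $k$ free of prime factors below roughly $y^{1/6-\iota}$ via the Rosser-type lower bound (it invokes Lemma \ref{rosser} with $d=2p$, $r=4m_0$, $b=-\bar p$), and verifies the level-of-distribution hypothesis only on average over $p$ using Bombieri--Vinogradov, exactly as you do. Three small points of justification should be repaired, none of which changes the verdict: (i) the bound $\vert\mathcal{A}\vert\gg y/(p_1\log y)$ cannot come from Dirichlet's theorem, since the modulus $2p_1\asymp y^{1/6}$ grows with $y$; it must be extracted, for almost all $p_1$, from the same averaged Bombieri--Vinogradov input, which is how the paper gets it inside Lemma \ref{rosser} from hypothesis \eqref{bveq}; (ii) knowing only that all but $o(\vert I\cap\mathbb{P}\vert)$ primes $p_1$ are good does not yield $\sum_{\text{good}}1/p_1\gg \iota-\eta$, because the exceptional primes could a priori concentrate at the bottom of $I$ --- apply the Markov step dyadically (the failure threshold $x_{p_1}/\log^5 x_{p_1}$ scales like $1/p_1$, so this works), or simply restrict to one dyadic block of $p_1$, which already gives $\gg y/\log^3 y$ as required; (iii) the case $k=1$ (i.e.\ $q=2p_1+1$, where $(q-1)/2$ has only one prime factor) should be discarded explicitly, as the paper does by dropping $q<2y^{1/6}$, though it contributes only $O(y^{1/6})$ terms.
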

\begin{proof}
Note that $\big(\frac{p}{4p-1}\big)=1$ for each $p\mid m_0$, by quadratic reciprocity.
If $q\equiv -1\lessspacepmod{4m_0}$ then because each factor of $m_0$ is a quadratic residue mod $q$, it follows that $m_0$ is a quadratic residue mod $q$. 
For each $p\in [\frac{y^{1/6}}{2},y^{1/6}]$, we let $\bar p$ denote the multiplicative inverse of $p$ mod $4m_0$ in the interval $(0,4m_0)$.
Then, we try to apply Lemma \ref{rosser} with $b=-\bar p$, $r=4m_0$, $x=\frac{y}{2p}$, $d=2p$, $\delta=\frac{1}{5}$, and $\epsilon=\frac{1}{5}(1-\iota)$. The hypothesis of the lemma may not be satisfied for all $p$, but it will be satisfied on average since the average of the left-hand side of \eqref{bveq} is 
$$\sum_{p\in [\frac{y^{1/6}}{2},y^{1/6}]}\text{ }\sum_{\substack{q\leq (\frac{y}{2p})^{3/5-2\iota^2/5}\\ 2p\mid q}}\max_{(a,q)=1}\Big\vert\pi(y; q,a)-\frac{\pi(y)}{\phi(q)}\Big\vert$$
which can be asymptotically bounded by 
$$\sum_{q\leq y^{\frac{3-2\iota^2}{6}}}\max_{(a,q)=1}\Big\vert\pi(y; q,a)-\frac{\pi(y)}{\phi(q)}\Big\vert\ll \frac{y}{\log^A y}$$
for all fixed $A$ by the Bombieri-Vinogradov theorem. In this way, we obtain many primes $q$ such that $\frac{q-1}{2}$ has a prime factor on the order of $y^{1/6}$. We can additionally ensure that $\frac{q-1}{2}$ is not prime by discarding values of $q$ less than $2y^{1/6}$.
\end{proof}

It follows by dyadic decomposition that there exist values 
$$\frac{1}{6}-\iota\leq \theta_1\leq \theta_2\leq \cdots\leq \theta_{\ell}$$ 
such that there are at least $\frac{y}{\log^{10}y}$ primes $q\leq y$ of the form $2\prod_{i=1}^{\ell}p_i+1$, where each $p_i$ is a prime between $y^{\theta_i}$ and $2y^{\theta_i}$, and additionally such that $m_0$ is a quadratic residue mod $q$. Call this set of primes $\mathcal{Q}^{(1)}$. Recall that $\theta=\frac{1}{6}-2\iota$.

The first step of our random algorithm is to choose random subsets of primes $\mathcal{P}_i$. We give each prime between $y^{\theta_i}$ and $2y^{\theta_i}$ an independent probability of $\frac{y^{\theta}}{\#\{p\sim y^{\theta_i}\}}$ of being in $\mathcal{P}_i$. We let $\mathcal{Q}^{(2)}$ be the subset of $\mathcal{Q}^{(1)}$ consisting of elements of the form $2\prod_{i=1}^{\ell}p_i+1$ where for each $i$, $p_i\in\mathcal{P}_i$. We then independently give each element of $\mathcal{Q}^{(2)}$ a probability of $\frac{2y^{\theta+\rho}\log^{-\kappa} y}{\vert \mathcal{Q}^{(2)}\vert}$ of being in $\mathcal{Q}^{(3)}$. Of course, for this to make sense, $\vert \mathcal{Q}^{(2)}\vert$ has to be at least $2y^{\theta+\rho}\log^{-\kappa} y$, which we will show is almost certain to happen. Once this is established, it becomes clear that $\mathcal{Q}^{(3)}$ is almost certain to have Property 1. This is the first property we will show $\mathcal{Q}^{(3)}$ is almost certain to satisfy.

Fix a subset of $\mathcal{Q}^{(1)}$ that we will call $\mathcal{Q}^{\star}$. 
For any $\mathcal{P}_1,\ldots,\mathcal{P}_j$, we say that the collection of sets $\{\mathcal{P}_1,\ldots,\mathcal{P}_j\}$ \emph{performs generically} if there are at least $\prod_{i=1}^j\frac{\vert\mathcal{P}_i\vert}{y^{\theta_i}}\vert\mathcal{Q}^\star\vert$ primes in $\mathcal{Q}^{\star}$ which can be written in the form $q=2\prod_{i=1}^{\ell}p_i+1$ with $p_i\in \mathcal{P}_i$ for $i\leq j$ and $p_i\sim y^{\theta_i}$ for $i>j$. We write the number of such primes as $\pi(\mathcal{P}_1,\ldots,\mathcal{P}_j)$. Suppose the subcollection
$\{\mathcal{P}_1,\ldots,\mathcal{P}_j\}$ of the $\mathcal{P}_i$ constructed in the previous paragraph
performs generically, with $j<\ell$. Note that $$\pi(\mathcal{P}_1,\ldots,\mathcal{P}_j)\le\sum\limits_{\substack{p_i\in\mathcal{P}_i, 1\leq i\leq j\\ p_{j+1}\sim y^{\theta_{j+1}}}}\pi(\{p_1\},\ldots,\{p_j\},\{p_{j+1}\})$$ and also 
$$\pi(\{p_1\},\ldots,\{p_j\},\{p_{j+1}\})\leq 
\prod_{i=j+2}^\ell y^{\theta_i}\ll
y\prod_{i=1}^{j+1}y^{-\theta_i}.$$ Thus, the fraction of $(p_1,\ldots,p_j,p_{j+1})$ which perform generically is asymptotically at least $\frac{\vert\mathcal{Q}^\star\vert}{y}$. It is therefore almost certain that $\{\mathcal{P}_1,\ldots,\mathcal{P}_j,\mathcal{P}_{j+1}\}$ performs generically. By finite induction, it is almost certain that $\{\mathcal{P}_1,\ldots,\mathcal{P}_{\ell}\}$ performs generically. In particular, taking $\mathcal{Q}^{\star}=\mathcal{Q}^{(1)}$, we see that $\mathcal{Q}^{(2)}$ has Property 1 almost certainly, which means that $\mathcal{Q}^{(3)}$ also has this property almost certainly. Note that Property 4 is also almost certainly satisfied by $\mathcal{Q}^{(3)}$.

Next, we turn to Property 7. Let $s$ be a positive integer of size at most $y^{\rho}$. Let $Y=y^s$.

\begin{lemma}\label{largesieve}
For $N\in \mathbb{Z}_{>0}$ and complex numbers $\{a_m\}_{m\in\mathbb{Z}_{\geq 0}}$, $$\sum_{n\leq N}\frac{n}{\phi(n)}\sum_{\chi\emph{ mod }n}^{\star}\Big\vert\sum_{m=1}^Ma_m\chi(m)\Big\vert^2\ll (N^2+M)\sum_{m=1}^M\vert a_m\vert^2.$$
\end{lemma}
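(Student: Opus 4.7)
The plan is to deduce this statement from the additive form of the large sieve inequality, which says that for any well-spaced set of real numbers $\alpha_1,\ldots,\alpha_R$ modulo $1$, with $\|\alpha_i-\alpha_j\|\geq \delta$ whenever $i\neq j$, we have
$$\sum_{r=1}^R\Big\vert\sum_{m=1}^M a_m e(m\alpha_r)\Big\vert^2\ll (\delta^{-1}+M)\sum_{m=1}^M\vert a_m\vert^2.$$
I would cite this as a standard consequence of a duality/Montgomery-Vaughan style argument and apply it to the Farey fractions $a/q$ with $q\leq N$ and $(a,q)=1$, which are $N^{-2}$-spaced. Writing $T(\alpha)=\sum_m a_m e(m\alpha)$, this yields
$$\sum_{q\leq N}\sum_{\substack{a\text{ mod }q\\(a,q)=1}}\vert T(a/q)\vert^2\ll (N^2+M)\sum_{m=1}^M\vert a_m\vert^2.$$

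The second step is to convert character sums into additive sums via Gauss sums. For a primitive character $\chi$ modulo $q$, one has $\chi(m)\tau(\bar\chi)=\sum_{a\text{ mod }q}\bar\chi(a)e(am/q)$ for every integer $m$, with $\vert\tau(\bar\chi)\vert^2=q$. Writing $S(\chi)=\sum_m a_m\chi(m)$, I would multiply through by $a_m$ and sum, obtaining $S(\chi)\tau(\bar\chi)=\sum_{(a,q)=1}\bar\chi(a)T(a/q)$. Then expanding the square and using orthogonality of characters modulo $q$,
$$\sum_{\chi\text{ mod }q}\Big\vert\sum_{(a,q)=1}\bar\chi(a)T(a/q)\Big\vert^2=\phi(q)\sum_{(a,q)=1}\vert T(a/q)\vert^2,$$
so restricting the left-hand side to the subsum over primitive $\chi$ and using $\vert\tau(\bar\chi)\vert^2=q$ gives
$$q\sum_{\chi\text{ mod }q}^{\star}\vert S(\chi)\vert^2\leq \phi(q)\sum_{(a,q)=1}\vert T(a/q)\vert^2,$$
which rearranges to $\tfrac{q}{\phi(q)}\sum^{\star}_\chi\vert S(\chi)\vert^2\leq \sum_{(a,q)=1}\vert T(a/q)\vert^2$.

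Summing this over $q\leq N$ and applying the additive bound from the first step yields exactly the claimed inequality. The only nontrivial input is the additive large sieve; everything else is standard manipulation of Gauss sums and orthogonality. The one subtlety worth flagging is that the Gauss sum identity $\chi(m)\tau(\bar\chi)=\sum_a \bar\chi(a)e(am/q)$ must be used in the form valid for \emph{all} integers $m$ (including those with $(m,q)>1$), which is precisely the property that characterizes primitive characters — this is what justifies discarding the imprimitive characters on the left without damaging the inequality. There is no real obstacle here; the result is a textbook application of the duality between characters and additive exponentials, and I would simply cite the additive large sieve (as, e.g., Theorem 7.13 of \cite{FI:2010}) rather than reproving it.
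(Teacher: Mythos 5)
Your argument is correct: the reduction from the multiplicative to the additive large sieve via the Gauss-sum identity $\chi(m)\tau(\bar\chi)=\sum_{a\,(q)}\bar\chi(a)e(am/q)$ (valid for all $m$ precisely because $\chi$ is primitive), orthogonality in $a$, $\vert\tau(\bar\chi)\vert^2=q$, and the $N^{-2}$-spacing of Farey fractions is the standard derivation, and every step checks out. The paper itself offers no argument beyond citing Theorem 2 of Bombieri--Davenport \cite{BD:1968}, which is essentially this statement and is proved there by the same multiplicative-to-additive reduction you carry out; so your write-up is a self-contained version of exactly the proof behind the paper's citation, at the cost of invoking the additive large sieve as a black box. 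One trivial slip: the additive large sieve is not Theorem 7.13 of \cite{FI:2010} (that book's Chapter 7 results, e.g.\ Theorem 7.14 cited elsewhere in the paper, concern the Selberg sieve); you want the large sieve chapter there, or Theorem 7.13 of Iwaniec--Kowalski, or \cite{BD:1968} itself. This is a citation detail, not a mathematical gap.
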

\begin{proof}
This is a version of the large sieve inequality. See, for instance, Theorem 2 of \cite{BD:1968}.
\end{proof}
In particular, \begin{equation}\label{large-sieve}\sum_{n\leq Y}\sum_{\chi\text{ mod }n}^{\star}\Big\vert \sum_{m\in \mathcal{S}}\chi(m)\Big\vert^2\ll Y^2\vert \mathcal{S}\vert\end{equation} where $\mathcal{S}$ is the set of integers with $2s$ prime factors, each of which lies in $\mathcal{Q}^{(1)}$.

We say that $n\leq Y$ is \emph{typical} if for every primitive character $\chi$ mod $n$, $$\Big\vert \sum_{m\in \mathcal{S}}\chi(m)\Big\vert=o(\vert \mathcal{S}\vert).$$ By the large sieve inequality (equation \eqref{large-sieve}), there are $O(\frac{Y^2}{\vert \mathcal{S}\vert})$ values of $n\leq Y$ which are atypical. We have $$\vert \mathcal{S}\vert> \Big(\frac{\vert \mathcal{Q}^{(1)}\vert}{2s}\Big)^{2s}\geq \Big(\frac{y}{2y^{\rho}\log^{10}y}\Big)^{2s}.$$ Therefore, there are at most $y^{2s(\rho+o(1))}$ atypical values of $n\leq Y$. 

Recall that $e(x)$ is a Lipschitz function. Consequently, there exist positive constants $c_1$ and $c_2$ such that when $\chi$ is a primitive character whose modulus is typical, the fraction of elements of $\mathcal{S}$ with the property that $\vert\arg^{\star}\chi(m)\vert\geq c_1$ is at least $c_2$, where $\arg^{\star}$ is a \emph{shifted argument function}, measuring the angle (in the range $(-\pi,\pi]$) with any particular vector on the unit circle.

Let $\chi$ be a primitive character mod some $n$, and let $\beta$ be an arbitrary real number. For an integer $m$, we define $\beta_m$ to be the value in the range $(\beta-1/2,\beta+1/2]$ for which $\chi(m)=e(\beta_m)$. We let $\mathcal{Q}^{\chi}_\beta$ be the set of elements of $\mathcal{Q}^{(1)}$ with $\vert \beta_q-\beta\vert\geq \frac{1}{y^{\rho+\iota}}$. If $\mathcal{Q}^{\chi}_\beta$ is small then the values of $\chi$ on a large subset of $\mathcal{Q}^{(1)}$ are tightly clustered. This in turn causes the character values on $\mathcal{S}$ to cluster around a single value. (Here it is important that each element of $\mathcal{S}$ has the same number of prime factors.)

We will now assume that $n$ is typical. We claim that the number of primes in $\mathcal{Q}^{(1)}$ which are also in $\mathcal{Q}^{\chi}_\beta$ is at least $\frac{y}{y^{\rho+\iota}}$. Indeed, if this is not the case then almost none of the elements of $\mathcal{S}$ are divisible by such a prime. For those $m\in \mathcal{S}$ not divisible, we have $$\vert \beta_m-2s\beta\vert\leq \sum_{q\mid m}\nu_q(m)\vert \beta_q-\beta\vert\leq \frac{2}{y^{\iota}}$$ where $\nu_q$ is the $q$-adic valuation. In particular, $\vert 2\pi(\beta_m-2s\beta)\vert<c_1$ for almost all $m\in\mathcal{S}$. This contradicts what we have established previously, since $2\pi(\beta_m-2s\beta)$ is the value at $\chi(m)$ of a shifted argument function. Therefore, $\mathcal{Q}^{\chi}_\beta$ will be large when $n$ is typical.

If we take $\mathcal{T}$ to be the set of $n\leq Y$ which have $s$ distinct prime factors, each of which lies in $\mathcal{Q}^{(3)}$, then we see that $$\vert \mathcal{T}\vert\leq \binom{\lfloor y^{\theta+\rho}\rfloor}{s}\leq \frac{y^{s(\theta+\rho+o(1))}}{s^s}$$ almost certainly. Moreover, there are almost certainly
$$\binom{\vert \mathcal{Q}^{(2)}\vert}{s}\geq\big(\frac{y^{\ell\theta}}{s\log^{O(1)}y}\big)^s\geq \frac{y^{s(\ell\theta-o(1))}}{s^s}$$ values which might be in $\mathcal{T}$, each of which is a priori equally likely to be a product of elements of $\mathcal{Q}^{(3)}$. Therefore, the probability that $\mathcal{T}$ contains any atypical values is bounded by $$\frac{\vert\mathcal{T}\vert\cdot\#\{\text{atypical integers in }[1,Y]\}}{\binom{\vert \mathcal{Q}^{(2)}\vert}{s}}\ll\frac{y^{2s(\rho+o(1))}}{y^{s(\ell\theta-\theta-\rho-o(1))}}=y^{s(3\rho-(\ell-1)\theta)+o(1)}.$$ It is therefore very likely that $\mathcal{T}$ doesn't contain any atypical values, an outcome which we will assume for the remainder of this paragraph. Take $\beta$ to be a real number and $\chi$ to be any primitive character mod an element of $\mathcal{T}$. Setting $\mathcal{Q}^{\star}$ equal to any $\mathcal{Q}^{\chi}_\beta$, we see from our earlier work that 
$\mathcal{Q}^{(2)}\cap \mathcal{Q}^{\chi}_\beta$ is large. In particular, the size of
$\mathcal{Q}^{(2)}\cap \mathcal{Q}^{\chi}_\beta$ almost certainly has at least $\frac{y^{\ell\theta}}{y^{\rho+2\iota}}$ elements.
Keeping in mind the combinatorial remark we made earlier, we may populate the following table:

\begin{center}
\begin{tabular}{|p{.2cm}|p{2.7cm}:p{2cm}|p{2.8cm}:p{2cm}|}
\hline
$i$&Expected lower bound for $\mathcal{Q}^{(i)}$&$\epsilon$-certainty&Expected bound for $\mathcal{Q}^{(i)}\cap \mathcal{Q}_\beta^\chi$&$\epsilon$-certainty\\
\hline
$1$&$y^{1-\iota}$&$\infty$&$y^{1-\rho-\iota}$&$\infty$\\
$2$&$y^{\ell\theta-\iota}$&$\theta_1-\iota$&$y^{\ell\theta-\rho-2\iota}$&$\theta_1-\rho-2\iota$\\
$3$&$y^{\theta+\rho-\iota}$&$\theta_1-\iota$&$y^{\theta-2\iota}$&$\theta_1-\rho-2\iota$\\
\hline
\end{tabular}
\end{center}
The key point here is that hardly any additional certainty is lost when passing from $\mathcal{Q}^{(2)}$ to $\mathcal{Q}^{(3)}$. This is because this stage of the culling is done uniformly, whereas
the selection of $\mathcal{Q}^{(2)}$ from $\mathcal{Q}^{(1)}$ involves choosing a random subset of a smaller set, namely primes on the order of $y^{\theta_1}$.
Note that we have been unnecessarily generous with subtracting $\iota$ terms and that stronger bounds also hold.

We now divide $[0,1]$ into subintervals of length $\frac{1}{y^{\rho+\iota}}$ and take our values of $\beta$ to be the endpoints of these subintervals. This is sufficient to tell us that for any $\beta$ value, $\vert \beta_q-\beta\vert\geq \frac{1}{2y^{\rho+\iota}}$ for many values of $q$. In particular, applying this argument for every primitive $\chi$ whose modulus is in $\mathcal{T}$ and then repeating this process for every $s$ up to  $y^{\rho}$, we can conclude that $\mathcal{Q}^{(3)}$  satisfies Property 7 with $\theta-2\rho-2\iota$-certainty, recalling that $\theta < \theta_1$.

Finally, let $p$ be a prime less than $y$. We are interested in the probability that $$\#\big\{q\in \mathcal{Q}^{(3)}: p\mid \frac{q-1}{2}\big\}>y^{\rho}.$$ We note that $$\#\big\{q\in \mathcal{Q}^{(2)}: p\mid \frac{q-1}{2}\big\}\leq 2y^{(\ell-1)\theta}$$ almost certainly. Conditional on this being the case, the probability we are studying can be bounded by 
\begin{align*}
\sum_{j=\lceil y^\rho\rceil}^{\lfloor 2y^{(\ell-1)\theta}\rfloor}\binom{\lfloor 2y^{(\ell-1)\theta}\rfloor}j\Big(\frac{2y^{\theta+\rho}\log^{-\kappa} y}{\vert \mathcal{Q}^{(2)}\vert}\Big)^j
&<\sum_{j=\lceil y^\rho\rceil}^{\lfloor 2y^{(\ell-1)\theta}\rfloor}\Big(\frac{y^{\ell\theta+\rho}}j\log^{-\kappa} y\frac{\log^{11}y}{y^{\ell\theta}}\Big)^j\\
&<\sum_{j\ge y^\rho} (\log y)^{(11-\kappa)j}.
\end{align*} Since $\kappa$ is assumed to be large, this probability is vanishingly small. In particular, it is very likely that $\mathcal{Q}^{(3)}$ satisfies Property 6. Since it is very likely that $\mathcal{Q}^{(3)}$ has each property, there is a particular choice of $\mathcal{Q}^{(3)}$ which has each property.

\end{proof}

Let $\mathcal{Q}$ be a set which has the properties enumerated in the statement of Proposition \ref{set-up-prop}. There is another condition we will want our set of primes to satisfy before moving forward. We set $z=\lfloor y^{(1+\delta-3\iota)\lfloor y^{\theta+\iota\rho}\rfloor}\rfloor$. Let $t$ be a divisor of $s(z)$ with the property that $t$ is greater than $\log^{100} z$ but each proper divisor of $t$ is less than $\log^{100} z$. Then the number of distinct prime factors of $t$ is less than $\log\log z=O(\log y)$. We let $\mathcal{Q}'=\{q\in\mathcal{Q}: q\nmid t\}$ and say because $(s(z),\prod_{q\in\mathcal{Q}'}q)\leq \frac{s(z)}{\log^{100} z}$ that $\mathcal{Q}'$ satisfies Property 8. 

Now, we have the following combinatorial lemma.
\begin{lemma}
Let $\mathcal{A}$ be a set with $k$ elements and $n$ be a fixed integer. Let $\mathcal{B}$ be a subset of $\{(a_1,\ldots,a_n): a_i=a_j\implies i=j\}$ such that if $a\in \mathcal{A}$ then the proportion of elements of $\mathcal{B}$ with $a$ as a component is $o(1)$. Then for a random partition of $\mathcal{A}$ into $\mathcal{A}_1$ and $\mathcal{A}_2$ (where each element in $\mathcal{A}$ is independently assigned to $\mathcal{A}_1$ or $\mathcal{A}_2$ with equal likelihood), there is a fixed positive lower bound (dependent on $n$ but not $\mathcal{A}$ or $k$) on the probability that $$\vert \mathcal{A}_1^n\cap \mathcal{B}\vert\geq \frac{\vert \mathcal{B}\vert}{2^{2n+1}}$$ and $$\vert \mathcal{A}_2^n\cap \mathcal{B}\vert\geq \frac{\vert \mathcal{B}\vert}{2^{2n+1}}.$$
\end{lemma}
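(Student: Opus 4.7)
The plan is to apply the second moment method to the random variables $X_i := |\mathcal{A}_i^n \cap \mathcal{B}|$ for $i \in \{1, 2\}$, where the randomness is the partition. Since a tuple $b = (b_1,\ldots,b_n) \in \mathcal{B}$ has distinct entries, each independently assigned to $\mathcal{A}_1$ with probability $1/2$, we have $E[X_i] = |\mathcal{B}|/2^n$. Observe that the threshold $|\mathcal{B}|/2^{2n+1}$ in the statement sits a factor $2^{n+1}$ below this mean, leaving a generous buffer.

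For the variance, expand $X_1 = \sum_{b \in \mathcal{B}} \mathbf{1}_{b \in \mathcal{A}_1^n}$; two indicators are independent whenever the tuples $b, b'$ share no entry, so only intersecting pairs contribute, each with covariance at most $2^{-n}$. Let $\mathcal{B}_a := \{b \in \mathcal{B} : a \text{ is an entry of } b\}$. The hypothesis gives $|\mathcal{B}_a| = o(|\mathcal{B}|)$ uniformly in $a$, while the identity $\sum_a |\mathcal{B}_a| = n|\mathcal{B}|$ is immediate by double counting. Hence
$$\sum_{a \in \mathcal{A}} |\mathcal{B}_a|^2 \leq \big(\max_a |\mathcal{B}_a|\big) \cdot \sum_a |\mathcal{B}_a| = o(|\mathcal{B}|^2),$$
which bounds the number of intersecting ordered pairs by $n^2 \cdot o(|\mathcal{B}|^2)$, and so $\mathrm{Var}(X_i) = o(|\mathcal{B}|^2)$, with the implicit constant depending only on $n$.

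Applying Chebyshev then yields $P\bigl[X_i < |\mathcal{B}|/2^{n+1}\bigr] \leq 4\,\mathrm{Var}(X_i)/E[X_i]^2 = o(1)$ for fixed $n$, so once $k$ is large enough in terms of $n$, each $X_i$ exceeds $|\mathcal{B}|/2^{2n+1}$ with probability at least $3/4$, and a union bound over $i \in \{1,2\}$ delivers probability at least $1/2$ that both do. The resulting constant depends only on $n$, as required. There is no real obstacle here: once the entry-multiplicity hypothesis has been converted into the variance bound, the factor of $2^{n+1}$ of slack in the threshold makes the Chebyshev step routine, and the union bound costs only a factor of two.
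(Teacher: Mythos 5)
Your proof is correct, but it takes a genuinely different route from the paper. You run a second-moment argument on each count $X_i=\vert\mathcal{A}_i^n\cap\mathcal{B}\vert$ separately: the mean is $\vert\mathcal{B}\vert/2^n$, only pairs of tuples sharing an entry contribute to the variance, and the hypothesis enters through $\sum_a\vert\mathcal{B}_a\vert^2\leq\big(\max_a\vert\mathcal{B}_a\vert\big)\cdot n\vert\mathcal{B}\vert=o(\vert\mathcal{B}\vert^2)$, after which Chebyshev and a union bound give \emph{both} inequalities with probability $1-o(1)$ (even at the stronger threshold $\vert\mathcal{B}\vert/2^{n+1}$). The paper instead uses only a first moment: it considers the number $Z$ of pairs $(b_1,b_2)\in\mathcal{B}^2$ with no common component such that $b_1\in\mathcal{A}_1^n$ and $b_2\in\mathcal{A}_2^n$; the hypothesis is used just to say that a random pair of elements of $\mathcal{B}$ is component-disjoint with probability $1-o(1)$, so $\mathbb{E}[Z]\geq(1-o(1))\vert\mathcal{B}\vert^2/2^{2n}$, while trivially $Z\leq X_1X_2\leq\vert\mathcal{B}\vert^2$; a reverse-Markov step then gives $Z\geq\vert\mathcal{B}\vert^2/2^{2n+1}$ with probability roughly $2^{-2n-1}$, and since $X_{3-i}\leq\vert\mathcal{B}\vert$ this forces each $X_i\geq\vert\mathcal{B}\vert/2^{2n+1}$. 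So the paper's argument is shorter and avoids any variance computation but only yields a constant (in $n$) probability, which is all the application needs; your argument costs the covariance bookkeeping but buys concentration, i.e., the event holds with probability tending to $1$. Two cosmetic points: the factor $n^2$ in your count of intersecting pairs is unnecessary (the bound $\sum_a\vert\mathcal{B}_a\vert^2$ already suffices), and the phrase ``once $k$ is large enough'' should really be ``once the $o(1)$ in the hypothesis is small enough in terms of $n$,'' the same implicit asymptotic reading the paper's own proof uses.
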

\begin{proof}
Pick two elements $b_1$ and $b_2$ in $\mathcal{B}$ with the property that no element of $\mathcal{A}$ is a component of both of them. Note that if $b_1$ and $b_2$ are chosen randomly, this happens with probability $1-o(1)$ (this becomes clear if you first choose $b_1$ and then consider the chance that $b_2$ shares any particular component). Let $\mathcal{A}_1$ and $\mathcal{A}_2$ randomly partition $\mathcal{A}$, and for $i\in\{1,2\}$, let $\mathcal{A}^n_i$ be the subset of $\prod_{j=1}^n\mathcal{A}$ given by $\prod_{j=1}^n\mathcal{A}_i$. The probability that $b_1\in \mathcal{A}_1^n$ and $b_2\in \mathcal{A}_2^n$ is $\frac{1}{2^{2n}}$. Thus, the expected value of the number of pairs $(b_1,b_2)$ with $b_1\in \mathcal{A}_1^n$ and $b_2\in \mathcal{A}_2^n$ is $\frac{\vert \mathcal{B}\vert^2(1-o(1))}{2^{2n}}$. The lemma follows by noting that the maximum value of this quantity is $\vert\mathcal{B}\vert^2$.
\end{proof}
Let $\mathcal{A}$ be the disjoint union over $i\in\{1,\ldots,\ell\}$ of $$\big\{p\sim y^{\theta_i}: \exists q\in\mathcal{Q}'\text{ with }p\mid \frac{q-1}{2}\big\}.$$ Let $$\mathcal{B}=\big\{(p_1,\ldots,p_{\ell}): p_i\in\mathcal{P}_i\forall i, 2\prod_{i=1}^{\ell}p_i+1\in\mathcal{Q}'\big\}.$$ (Note that this set may be slightly larger in size than $\mathcal{Q}'$ if $\theta_i\approx \theta_{i+1}$ for some $i$.) We randomly partition $\mathcal{A}$ into $\mathcal{A}_1$ and $\mathcal{A}_2$. For $i\in\{1,2\}$, we let $\mathcal{Q}_i$ be the set of elements of $\mathcal{Q}'$ which correspond to elements of $\mathcal{A}_i^n\cap \mathcal{B}$. Applying our lemma, there is a positive probability that $\vert \mathcal{Q}_1\vert$ and $\vert \mathcal{Q}_2\vert$ are both greater than $y^{\theta+\rho}\log^{-\kappa-1} y$. Moreover, it is almost certain that the following statement holds:
\vskip 5pt
\noindent \textbf{Property 7*}. 
For each $i\in\{1,2\}$, for every product $n$ of at most $y^{\rho}$ elements of $\mathcal{Q}_i$, for every non-principal character $\chi$ mod $n$, and for every real number $\beta$, there exist at least $y^{\theta-3\iota}$ elements $q\in\mathcal{Q}_{3-i}$ such that if $\chi(q)=e(\beta_q)$ then $\vert\beta_q-\beta\vert\geq \frac{1}{2y^{\rho+\iota}}$. 
\vskip 5pt
Therefore, we can construct sets $\mathcal{Q}_1$ and $\mathcal{Q}_2$ which satisfy Property 7*, which satisfy Properties 2--6 verbatim (as well as Property 8), and which satisfy a weaker version of Property 1 (containing at least $y^{\theta+\rho}\log^{-\kappa-1} y$ elements). 

Take $L_i=\prod_{q\in\mathcal{Q}_i}q$ and $$\mathcal{D}_i=\{d\mid L_i: \omega(d)=w\},$$ where $w=\lfloor y^{\theta+\iota\rho}\rfloor$. Also take $$L_i(p)=\prod\limits_{\substack{q\in\mathcal{Q}_i\\ q\equiv 1\lessspacepmod{p}}}q.$$ Finally, set $L=L_1L_2$. The division into two sets can be explained as follows. For each prime $q\mid L$, we will want to obtain a good upper bound for the probability that $dk+1$ is in a proper subgroup of odd index of $(\mathbb{Z}/q\mathbb{Z})^{\times}$, for a given method of choosing $d\mid L$. The natural approach would be to choose $d$ uniformly from the divisors of $L$ of a certain size. In order to then obtain an upper bound, we would need the divisors to equidistribute mod each $q\mid L$. However, unless the divisors are extremely small, there will be a clear bias towards the residue 0 mod $q$. With two sets of primes, we can make sure that the divisors of $L$ which are supposed to equidistribute mod elements of $\mathcal{Q}_1$ have prime factors drawn from $\mathcal{Q}_2$ and vice versa.

\section{Constructing Families of Primes}\label{k-constr}

Fix $i\in \{1,2\}$. Let $x=y^{\frac{w}{\delta-3\iota}}$, where again $w=\lfloor y^{\theta+\iota\rho}\rfloor$.  We apply Proposition \ref{bv3} with these values of $x$ and $w$, with $N=L_i$, and with $A=10$, letting $\mathcal{D}'_i$ be the resulting subset of $\mathcal{D}_i$ with the desired property. Let $d^{\star}_i\lessspacepmod{q_0}$ be the most common residue among the elements of $\mathcal{D}'_i$. 
Let $R_1=\log x/(\log\log x)^3$ and $R_2=\log x\log\log x$. Recall that $T$ has been taken to be a large integer. Let $\mathcal{R}$ consist of primes in the range $[R_1,R_2]$, together with $2^T$ and all odd primes that divide $q-1$ for any $q\mid L$.

Fix $d\in\mathcal{D}'_i$. If $p\in [x^{\iota},x^{1-\iota}]$ then by Lemma \ref{Selberg}, $$\#\{k'\leq \frac{x}{\W p}: \W dk'p+1\in\mathbb{P}, \text{P}^-(k')\geq x^{\iota}\}\ll \frac{x}{p\log^2 x}.$$ 
If $r\in\mathcal{R}$ then the number of $k\leq x$ such that $dk+1$ is prime, $k$ is even, the smallest prime factor of $k/2$ is greater than $x^{\iota}$ but not equal to $k/2$ itself, and $k$ has a prime factor $p\equiv 1\lessspacepmod{r}$ is bounded asymptotically by 
$$\sum_{\substack{p\in [x^{\iota},x^{1-\iota}]\\ p\equiv 1\lessspacepmod{r}}}\frac{x}{p\log^2 x}\ll \frac{x}{r\log^2 x},$$ where we have used an upper bound on the number of primes which are 1 mod $r$ which can be provided by, for instance, the Siegel-Walfisz theorem. Let $\mathcal{K}'_d$ be the set of $k$ which satisfy these properties for any $r\in\mathcal{R}$. Then the size of $\mathcal{K}'_d$ is bounded asymptotically by 
\begin{align*}\frac{x}{\log^2 x}\sum_{r\in \mathcal{R}}\frac{1}{r}&=\frac{x}{\log^2 x}\bigg(\frac{1}{2^T}+O\Bigl(\log\Big(\frac{\log\log x+\log \log\log x}{\log\log x-3\log \log\log x}\Big)+y^{-\theta\iota}\Bigr)\bigg)\\
&= \frac{x}{\log^2 x}\bigg(\frac{1}{2^T}+O\Bigl(\frac{\log\log\log x}{\log\log x}\Bigr)\bigg).
\end{align*} 
Now, let $\mathcal{K}$ be the set of positive integers $k\leq x$ with the following properties:
\begin{itemize}
\item $k$ is even
\item $\text{P}^-(k/2)>x^{\iota}$
\item $k/2$ is not prime
\item $d^{\star}_ik\equiv a_0-1\lessspacepmod{q_0}$
\item $(\mathbb{Z}/k\mathbb{Z})^{\times}$ has no subgroup whose index is a prime in the interval $[R_1,R_2]$
\item There are no order $2^T$ characters mod $k$
\item $(\phi(k),\phi(L))$ is a power of 2
\end{itemize}
As we have just shown that $\sum_{r\in \mathcal{R}}\frac 1r$ is bounded, $|\mathcal{K}| \gg \frac x{\log x}$.
Note also that if $k\in\mathcal{K}$ then $(k,L)=1$. We want to use Lemma \ref{rosser} to show that 
\begin{equation}
\label{lemma-condition}
\{k\in\mathcal{K}: dk+1\in\mathbb{P}\}\gg \frac{x}{\log^2 x}.
\end{equation}
Instead of directly applying the lemma with our values of $d$ and $x$, we will take $\tilde d := 2pd$ and $\tilde x := \frac{x}{2p}$, which depend on the prime $p$, which varies in the range $(x^{\iota},2x^{\iota})$. This is a way of ensuring that $k/2$ is not prime, just like in Lemma \ref{seed}. In the application, we also take $\delta=\frac{1}{6}-3\iota$, $\epsilon=\iota$, $r=q_0$, and $b=\overline{d_i^{\star}}(a_0-1)$, where $\overline{d_i^{\star}}$ denotes a representative of
the multiplicative inverse of $d_i^\star$ modulo $q_0$. The hypothesis that needs to be established is that 
$$\sum_{\substack{q\leq \tilde x^{1/6-3\iota+2(1+\iota)\iota}\\ \tilde d\mid q}}\max_{(a,q)=1}\Big\vert \pi(dx; q,a)-\frac{\pi(dx)}{\phi(q)}\Big\vert\leq \frac{\tilde x}{\log^5 \tilde x}$$ 
on average, as $p$ varies, for a specific $d$ for which \eqref{lemma-condition} is being established. The left-hand side, summed over all primes $p$ in our range, can be asymptotically bounded by 
$$\sum_{\substack{q\leq x^{1/6-3\iota+2(1+\iota)\iota}\\ 2d\vert q}}\max_{(a,q)=1}\Big\vert \pi(dx; q,a)-\frac{\pi(dx)}{\phi(q)}\Big\vert.$$ By definition of $\mathcal{D}'_i$, therefore, the desired bound holds on average. To finish up the third condition, we again need to subtract off the small values of $k$ (those less than $4x^{\iota}$). Finally, the last three conditions on $k$ are taken care of by subtracting off $\mathcal{K}'_d$.

Applying this to every $d\in \mathcal{D}'_i$, we find $\gg \frac{x}{\log^2x}\vert \mathcal{D}_i\vert$ pairs $(d,k)$ with $d\in\mathcal{D}_i$ congruent to $d^{\star}_i$ mod $q_0$, $k\in\mathcal{K}$, and $dk+1\in\mathbb{P}$. Since $\vert\mathcal{K}\vert\ll \frac{x}{\log x}$,
there are, in fact, $\gg \frac{|\mathcal{K}||\mathcal{D}_i|}{\log x}$ such pairs.
Let $\mathcal{K}_i$ be the set of $k\in\mathcal{K}$ for which there are at least $\frac{\vert\mathcal{D}_i\vert}{\log x\log\log x}$ values of $d\in\mathcal{D}_i$ with $d\equiv d^{\star}_i\lessspacepmod{q_0}$ such that $dk+1$ is a prime. Let $\alpha_i=\frac{\vert\mathcal{K}\vert}{\vert\mathcal{K}_i\vert}$. For $k\in\mathcal{K}$, let 
$$\mathcal{D}_i^{(k)}:=\{d\in\mathcal{D}_i, d\equiv d^{\star}_i\ (\textrm{mod}\ q_0): dk+1\in\mathbb{P}\}.$$ 
We know the average size of $\mathcal{D}_i^{(k)}$ is $\gg \frac{|\mathcal{D}_i|}{\log x}$.
Since only those $k$ in $\mathcal{K}_i$ contribute substantially, the average size of $\mathcal{D}_i^{(k)}$ as $k$ ranges over $\mathcal{K}_i$ is
$\gg \frac{\alpha_i\vert\mathcal{D}_i\vert}{\log x}$.

Pick distinct elements $d,d'\in\mathcal{D}_i$, congruent to $d^{\star}_i$ mod $q_0$. 
Then the probability that $dk+1$ and $d'k+1$ are both primes when $k$ is chosen randomly from $\mathcal{K}_i$ is 
$\gg \big(\frac{\alpha_i}{\log x}\big)^2$. (Indeed, the probability is minimized if each element of $\mathcal{K}_i$ is involved in the same number of primes.) In particular, the number of $k\in \mathcal{K}$ for which $dk+1$ and $d'k+1$ are primes congruent to $a_0$ mod $q_0$ is $\gg \vert\mathcal{K}_i\vert\big(\frac{\alpha_i}{\log x}\big)^2$. On the other hand, by the Selberg sieve, the number of such $k$ is $O\big(\frac{x}{\log^3 x}\big)$. (This is analogous to Lemma \ref{Selberg} but taking $\mathcal{A}$ to be the set of products $k(dk+1)(d'k+1)$; it follows from (7.143) and Theorem 7.14 of \cite{FI:2010}.) From this, we have $\alpha_i |\mathcal{K}| \ll \frac x{\log x}$, so 
$\alpha_i=O(1)$, i.e. 
$$\vert\mathcal{K}_i\vert\gg \vert\mathcal{K}\vert.$$

\section{Constructing Carmichael Numbers}\label{carmichaelconstruction}
In this section, we construct Carmichael numbers congruent to $\frac{1}{m_0}$ mod $L_2$ whose prime factors are of the form $dk_1+1$ (with $d\in \mathcal{D}_1$). These are not the Carmichael numbers we are ultimately interested in, but they will be important factors. Essentially, our goal for now is to deal with the modular constraints modulo $L_1$ and $L_2$; we will worry about the constraints modulo $k_1$ and $k_2$ in later sections. We start off with an equidistribution result.

\begin{lemma}\label{equi}
If $p$ is a prime with $L_2(p)>1$ then for each residue class $a\in (\mathbb{Z}/L_2(p)\mathbb{Z})^{\times}$, $$\#\{d\in \mathcal{D}_1: d\equiv a\lessspacepmod{L_2(p)}\}=\frac{\vert\mathcal{D}_1\vert}{\phi(L_2(p))}\big(1+o(1)\big).$$
\end{lemma}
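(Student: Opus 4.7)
The plan is to apply character orthogonality on $(\mathbb{Z}/L_2(p)\mathbb{Z})^{\times}$. Since $(L_1,L_2)=1$, every $d\in\mathcal{D}_1$ is coprime to $L_2(p)$, and so
$$\#\{d\in\mathcal{D}_1: d\equiv a\lessspacepmod{L_2(p)}\}=\frac{|\mathcal{D}_1|}{\phi(L_2(p))}+\frac{1}{\phi(L_2(p))}\sum_{\chi\neq \chi_0}\overline{\chi(a)}\,S(\chi),$$
where $S(\chi):=\sum_{d\in\mathcal{D}_1}\chi(d)$. It therefore suffices to show $\sum_{\chi\neq \chi_0}|S(\chi)|=o(|\mathcal{D}_1|)$.

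Enumerate $\mathcal{Q}_1=\{q_1,\ldots,q_N\}$ and set $\zeta_j=\chi(q_j)$; then $S(\chi)=e_w(\zeta_1,\ldots,\zeta_N)$ is the $w$-th elementary symmetric polynomial. The key structural input is Property 7* applied with $i=2$: by Property 6, for $p$ odd, $L_2(p)$ is a product of at most $y^{\rho}$ primes of $\mathcal{Q}_2$, so for any non-trivial $\chi$ mod $L_2(p)$ and any real $\beta$, at least $y^{\theta-3\iota}$ indices $j$ satisfy $\zeta_j=e(\phi_j)$ with $|\phi_j-\beta|\geq 1/(2y^{\rho+\iota})$.

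To bound $|S(\chi)|$, I would introduce the generating function $F(z)=\prod_{j=1}^N(1+z\zeta_j)$, so that $S(\chi)=[z^w]F(z)$, and by Cauchy's theorem $|S(\chi)|\leq r^{-w}\max_{|z|=r}|F(z)|$. On the circle $|z|=r$ with $z=re^{i\alpha}$, the factor $|1+z\zeta_j|$ attains its maximum $1+r$ precisely when $\phi_j\equiv -\alpha/(2\pi)\lessspacepmod{1}$; applying Property 7* with $\beta=-\alpha/(2\pi)$ produces at least $y^{\theta-3\iota}$ ``good'' indices $j$ for which the identity $|1+z\zeta_j|^2=1+2r\cos(\alpha+2\pi\phi_j)+r^2$, together with a Taylor expansion, yields $|1+z\zeta_j|\leq (1+r)(1-cr/y^{2(\rho+\iota)})$ for an absolute constant $c>0$. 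Bounding the remaining factors trivially by $1+r$, we obtain
$$|F(z)|\leq (1+r)^N\exp\bigl(-c'\,r\,y^{\theta-2\rho-5\iota}\bigr)$$
uniformly on $|z|=r$.

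Choosing $r=w/(N-w)\asymp y^{-\rho(1-\iota)}$, Stirling gives $r^{-w}(1+r)^N=O(\sqrt{w})|\mathcal{D}_1|$, and the saving in the exponent becomes $r\,y^{\theta-2\rho-5\iota}\asymp y^{\theta-3\rho+\iota\rho-5\iota}\geq y^{1/24+\iota\rho}$ after invoking $\theta\geq 1/6-\iota$ and $\rho=1/24-2\iota$. Summing over the at most $\phi(L_2(p))\leq y^{y^{\rho}}$ non-trivial characters,
$$\sum_{\chi\neq \chi_0}|S(\chi)|\leq |\mathcal{D}_1|\exp\bigl(y^{\rho}\log y+O(\log y)-c''\,y^{1/24+\iota\rho}\bigr)=o(|\mathcal{D}_1|),$$
since $\rho<1/24$ by a margin of $2\iota$, which completes the deduction.

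The main obstacle is converting the weak equidistribution of Property 7*---which identifies only $y^{\theta-3\iota}$ well-separated primes out of $|\mathcal{Q}_1|\sim y^{\theta+\rho}$ for each target direction---into an exponential saving in $|F(z)|$ large enough to dominate the bound $y^{y^{\rho}}$ on the number of characters. The parameter choices $\theta\geq 1/6-\iota$ and $\rho=1/24-2\iota$ are precisely calibrated so that $y^{1/24+\iota\rho}$ beats $y^{\rho}\log y$; without the gap $1/24-\rho=2\iota>0$ the argument would fail.
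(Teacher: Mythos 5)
Your proof is correct, and it rests on exactly the same key input and calibration as the paper's: character orthogonality modulo $L_2(p)$, Property 7* (legitimately applicable because Property 6 bounds $\omega(L_2(p))$ by $y^{\rho}$ for odd $p$, the only case used later), a per-character exponential saving of roughly $\exp(-c\,r\,y^{\theta-2\rho-O(\iota)})$ with $r\asymp w/\omega(L_1)\gg y^{-(1-\iota)\rho}$, and the numerical fact $\theta-3\rho\gg y^{\rho}$-wise beats the $y^{y^{\rho}}$ count of characters. The only real difference is the coefficient-extraction device: you write $S(\chi)$ as the $w$-th coefficient of $\prod_q(1+z\chi(q))$ and bound it by Cauchy's estimate on the circle $\vert z\vert=r=w/(N-w)$, comparing the error directly against $\vert\mathcal{D}_1\vert=\binom{N}{w}$ at the cost of an innocuous $O(\sqrt{w})$ factor; the paper instead builds the constraint $\omega(d)=w$ into a binomially weighted product $\prod_q\bigl((1-\epsilon)e^{-2\pi it\epsilon}+\epsilon e^{2\pi it(1-\epsilon)}\chi(q)\bigr)$ integrated over a Fourier parameter $t$, and normalizes by the principal-character contribution $\frac{1}{\epsilon}(1-\epsilon)^{N-w}\epsilon^{w}\binom{N}{w}$. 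Your radius $r$ is the paper's $\epsilon/(1-\epsilon)$ in disguise, and the pointwise estimate on good primes (angular separation $\geq\pi/y^{\rho+\iota}$ giving a factor $1-c r/y^{2(\rho+\iota)}$) is the same computation. What your packaging buys is that you never need the $t$-integral to act as an exact orthogonality relation or to identify the main term from it---orthogonality of Dirichlet characters already gives the main term exactly, and Cauchy's bound is a pure upper bound on the one coefficient you need; what the paper's version buys is that the main term and the error are produced by a single expression, at the price of the explicit normalization step. One small point to keep in mind (shared with the paper, which asserts $\epsilon>y^{-(1-\iota)\rho}$ in the same breath): your claim $\vert\mathcal{Q}_1\vert=O(y^{\theta+\rho})$, needed for $r\gg y^{-\rho(1-\iota)}$, is not among the listed properties verbatim but follows from Properties 3, 4, and 6, and is worth a sentence.
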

\begin{proof}
Let $\epsilon=\frac{w}{\omega(L_1)}$, where as before, $w$ is the number of prime factors each element of $\mathcal{D}_1$ has. Using the definition of $w$, we have $\epsilon>y^{-(1-\iota)\rho}$. Our initial goal will be to establish a bound for 
$$f(a):=\int_0^{1/\epsilon}dt\sum_{\chi\text{ mod }L_2(p)}\overline{\chi(a)}\prod_{q\in \mathcal{Q}_1}\Big((1-\epsilon)e^{-2\pi it\epsilon}+\epsilon e^{2\pi it(1-\epsilon)}\chi(q)\Big),$$ 
which, up to a multiplicative constant, is equal to the number of elements of $\mathcal{D}_1$ which are congruent to $a$ mod $L_2(p)$.
Note that 
$$\Big\vert (1-\epsilon)e^{-2\pi it\epsilon}+\epsilon e^{2\pi it(1-\epsilon)}\chi(q)\Big\vert^2=1-2\epsilon(1-\epsilon)\big(1-\Re(e^{2\pi it}\chi(q))\big).$$ Let $\chi$ be a non-principal character mod $L_2(p)$. By Property 6 of Proposition \ref{set-up-prop}, $\omega(L_2(p))\leq y^{\rho}$. Thus Property 7* applies to $\chi$, from which we learn that,
for fixed $t$,
 there exist at least $\frac{y^{\theta}}{y^{3\iota}}$ elements $q\in\mathcal{Q}_1$ such that if $\chi(q)=e(\beta_q)$ then $\vert \beta_q+t\vert\geq \frac{1}{2y^{\rho+\iota}}$. 
 When this is the case, $$1-\Re(e^{2\pi it}\chi(q))\geq 1-\cos\big(\frac{\pi}{y^{\rho+\iota}}\big)>\frac{2}{y^{2(\rho+\iota)}}$$ so \begin{align*}\Big\vert (1-\epsilon)e^{-2\pi it\epsilon}+\epsilon e^{2\pi it(1-\epsilon)}\chi(q)\Big\vert&=\Big\vert 1-\epsilon(1-e^{2\pi i(t+\beta_q)})\Big\vert\\&<\Big\vert 1-\frac{2\epsilon}{y^{2(\rho+\iota)}}+\epsilon\frac{\pi i}{y^{\rho+\iota}}\Big\vert\\&=\sqrt{(1-\frac{2\epsilon}{y^{2(\rho+\iota)}})^2+\frac{\epsilon^2\pi^2}{y^{2(\rho+\iota)}}}<1-\frac{\epsilon}{y^{2(\rho+\iota)}}.\end{align*}
Therefore, for any $t$, 
$$\sum_{\substack{\chi\text{ mod }L_2(p)\\ \chi\neq\chi_0}}\prod_{q\in \mathcal{Q}_1}\Big((1-\epsilon)e^{-2\pi it\epsilon}+\epsilon e^{2\pi it(1-\epsilon)}\chi(q)\Big)$$ 
is less than 
\begin{equation}\label{comp1}\phi(L_2(p))\Big(1-\frac{\epsilon}{y^{2(\rho+\iota)}}\Big)^{\frac{y^{\theta}}{y^{3\iota}}}<y^{y^{\rho}}e^{-y^{\theta-2(\rho+\iota)-3\iota-(1-\iota)\rho}}.\end{equation} 
On the other hand, the $\chi_0$ term contributes
\begin{align*}
\int_0^{1/\epsilon}dt\Big((1-\epsilon)e^{-2\pi it\epsilon}+\epsilon e^{2\pi it(1-\epsilon)}\Big)^{\omega(L_1)}
&=\frac{1}{\epsilon}(1-\epsilon)^{\omega(L_1)-w}\epsilon^w\binom{\omega(L_1)}{w} \\
&>\frac 1{\epsilon\omega(L_1)} = \frac 1w.
\end{align*}
Comparing this with \eqref{comp1} (and remembering that $\theta= 4\rho+6\iota$), 
we conclude that $$f(a)=\frac{1}{\phi(L_2(p))}\int_0^{1/\epsilon}dt\Big((1-\epsilon)e^{-2\pi it\epsilon}+\epsilon e^{2\pi it(1-\epsilon)}\Big)^{\omega(L_1)}\big(1+o(1)\big).$$ A standard application of Fourier analysis and character theory tells us that $f(a)$ is equal to $$\frac{1}{\epsilon}\phi(L_2(p))(1-\epsilon)^{\omega(L_1)-w}\epsilon^w\#\{d\in\mathcal{D}_1: d\equiv a\lessspacepmod{L_2(p)}\}.$$ It follows that $$\#\{d\in\mathcal{D}_1: d\equiv a\lessspacepmod{L_2(p)}\}=\frac{\vert\mathcal{D}_1\vert}{\phi(L_2(p))}\big(1+o(1)\big).$$
\end{proof}
Next we make a simple but important observation.
\begin{lemma}\label{added}
If $p$ is a prime with $L_2(p)>1$ then the fraction of elements of $\mathbb{Z}/L_2(p)\mathbb{Z}$ which are not units is $o(1)$.
\end{lemma}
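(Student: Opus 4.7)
The plan is to observe that $L_2(p)$ is square-free, being a product of distinct primes drawn from $\mathcal{Q}_2$, so the fraction of non-units in $\mathbb{Z}/L_2(p)\mathbb{Z}$ equals
$$1 - \prod_{q \mid L_2(p)}\Big(1 - \frac{1}{q}\Big) \leq \sum_{q \mid L_2(p)} \frac{1}{q}.$$
It therefore suffices to show that $\sum_{q \mid L_2(p)} q^{-1} = o(1)$. I would split the argument according to whether $p = 2$ or $p$ is odd, since these cases appeal to different properties of $\mathcal{Q}_2$.

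Suppose first that $p = 2$. Every $q \in \mathcal{Q}_2$ is odd by Property 3 of Proposition \ref{set-up-prop}, so trivially $q \equiv 1 \pmod{2}$, meaning $L_2(2) = L_2$. From the construction of $\mathcal{Q}^{(3)}$ in Section \ref{set-up} (Bernoulli sampling followed by partition) a Chernoff estimate yields $|\mathcal{Q}_2| \leq |\mathcal{Q}| \ll y^{\theta + \rho}$ almost certainly. On the other hand, using the definition $\theta = \theta_1/(1+\iota)$ together with Lemma \ref{seed}, every $q \in \mathcal{Q}_2$ has $(q-1)/2$ with at least two prime factors, each of size at least $y^{\theta_1} = y^{\theta(1+\iota)}$, so $q \geq 2 y^{2\theta(1+\iota)}$. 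Combining these bounds,
$$\sum_{q \mid L_2} \frac{1}{q} \ll y^{\theta + \rho - 2\theta(1+\iota)} = y^{-\theta + \rho - 2\theta\iota} = o(1),$$
because $\rho = 1/24 - 2\iota < 1/6 - \iota \leq \theta$.

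Now suppose $p$ is odd and $L_2(p) > 1$. Then some $q \in \mathcal{Q}_2 \subset \mathcal{Q}$ satisfies $p \mid q - 1$; since $p$ is odd this forces $p \mid (q-1)/2$, and Property 5 gives $p \geq y^{\theta(1+\iota)}$. Property 6 bounds the number of $q' \in \mathcal{Q}$ (hence in $\mathcal{Q}_2$) with $p \mid (q'-1)/2$ by $y^\rho$, and each such $q'$ exceeds $2p \geq 2 y^{\theta(1+\iota)}$. Therefore
$$\sum_{q \mid L_2(p)} \frac{1}{q} \leq y^{\rho} \cdot y^{-\theta(1+\iota)} = o(1),$$
again by $\rho < \theta$.

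There is no real obstacle here beyond bookkeeping: the only conceptual step is choosing the correct case split and pairing each case with the right properties of $\mathcal{Q}_2$ (Property 3 plus the cardinality bound from Section \ref{set-up} for $p = 2$; Properties 5 and 6 for odd $p$). The pivotal numerical fact in both branches is $\rho < \theta$, which is guaranteed by the parameter choices $\rho = 1/24 - 2\iota$ and $\theta \geq 1/6 - \iota$.
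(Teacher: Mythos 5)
Your proof is correct and follows essentially the same route as the paper: bound the proportion of non-units by $\sum_{q\mid L_2(p)}1/q$ and estimate this using the multiplicity bound (Property 6) and the size of the prime factors of $\frac{q-1}{2}$ (Properties 3 and 5). The only differences are cosmetic — you add an explicit $p=2$ case (where the needed upper bound $\vert\mathcal{Q}_2\vert\ll y^{\theta+\rho}$ is cleaner to cite from Properties 4 and 6 than from the random construction) and you use $q\gg y^{\theta(1+\iota)}$ rather than the paper's sharper $q\gg y^{2\theta(1+\iota)}$, both of which suffice.
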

\begin{proof}
The proportion of non-units is no more than $$\sum_{q\mid L_2(p)}\frac{1}{q}\ll \frac{y^{\rho}}{y^{2\theta(1+\iota)}}=o(1).$$

\end{proof}
Let $s=\lceil y^{2\rho}\rceil$. Let $k$ be a fixed integer relatively prime to $L_2$. Fix an odd prime $p$ with $L_2(p)>1$, and fix a subgroup $G$ of $(\mathbb{Z}/L_2(p)\mathbb{Z})^{\times}$ of index $p$. Consider the probability that $$d_1k+1,\ldots,d_sk+1\in G$$ where each $d_i$ is chosen uniformly (and independently) from $\mathcal{D}_1$. This of course is equal to the $s$th power of the probability that $dk+1$ is contained in $G$ for a single $d$ chosen uniformly from $\mathcal{D}_1$, which by Lemmas \ref{equi} and \ref{added} is $\frac{1}{p}\big(1+o(1)\big)$. Thus the overall probability is bounded by $$\Big(\frac{1}{p}\big(1+o(1)\big)\Big)^s\leq y^{-s(\theta(1+\iota)-o(1))}.$$ The importance of isolating one prime at a time cannot be overstated. As mentioned previously, we cannot possibly ask for equidistribution modulo $L_2$. By construction of $L_2$, however, we can show that clustering in a subgroup implies poor equidistribution modulo an integer which is much smaller than $L_2$, closer in size to $y$ than to $x$.

Note that $(\mathbb{Z}/L_2(p)\mathbb{Z})^{\times}$ contains $\frac{p^{\omega(L_2(p))}-1}{p-1}$ subgroups of index $p$. Thus, the probability that any such $G$ exists for any odd prime $p$ is bounded by 
$$y^{-s(\theta(1+\iota)-o(1))}\cdot y^{y^{\rho}}\cdot y.$$ 
That is, the probability that $d_1k+1,\ldots,d_sk+1$ all lie in a proper subgroup of $(\mathbb{Z}/L_2\mathbb{Z})^{\times}$ of odd index is bounded by $y^{-s(\theta(1+\iota)-o(1))}$. 

Let $\mathcal{K}^{\star}_1$ be the set of $k\in\mathcal{K}_1$ for which there is no subgroup of odd index of $(\mathbb{Z}/L_2\mathbb{Z})^{\times}$ which contains $dk+1$ for at least 
$\frac{\vert\mathcal{D}_1\vert}{10\log x\log\log x}$ values of $d\in\mathcal{D}_k^{\1}$. Suppose that $$\#\{k\in \mathcal{K}_1, k\notin \mathcal{K}^{\star}_1\}\geq \frac{x}{\log^2 x}.$$ Then there would be at least 
$$\frac{x}{\log^2 x}\big(\frac{\vert\mathcal{D}_1\vert}{10\log x\log\log x}\big)^s$$ 
values of $k$ and $d_1,\ldots,d_s\in\mathcal{D}_1$ (where repeats are allowed and order matters) with all of $\{d_ik+1\}_i$ in some subgroup. However, we just showed that there are at most $\frac{x\vert\mathcal{D}_1\vert^s}{y^{s(\theta(1+\iota)-o(1))}}$ sets of values with that property. Remembering that $\log x=O(y^{\theta+\iota\rho}\log y)$, this gives a contradiction. Generically, therefore, our primes of the form $dk+1$ will be able to hit any quadratic residue mod $L_2$, including $\frac{1}{m_0}$.

For the remainder of the section, we will focus on making this statement more precise. For a value of $k\in \mathcal{K}^{\star}_1$ that we treat as fixed for now, let $\mathcal{P}^{\1}_k$ be the set of primes of the form $dk+1$, with $d\in\mathcal{D}_k^{\1}$.

\begin{lemma}\label{standard}
Let $G$ be a subgroup of $(\mathbb{Z}/N\mathbb{Z})^{\times}$. For any sequence of $n$ elements in $G$, where $n\in [2,N^{\lambda(N)}]$, there exist at least $\frac{2^n}{n^{\lambda(N)^2}}$ subsequences whose product is the identity.
\end{lemma}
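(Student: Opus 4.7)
The plan is to combine two ingredients: (a) a Davenport-constant bound $D(G) \leq \lambda(N)^2$, and (b) a classical zero-sum counting lemma stating that any sequence of length $n \geq D(G)$ in a finite abelian group $G$ contains at least $2^{n-D(G)}$ zero-sum subsequences.

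For (a), I would pass to the subgroup $H = \langle g_1,\ldots,g_n\rangle \leq G$. Since $G \leq (\mathbb{Z}/N\mathbb{Z})^\times$, the exponent of $H$ divides $\lambda(N)$, so writing $H \cong \bigoplus_{j=1}^r \mathbb{Z}/m_j\mathbb{Z}$ with each $m_j \mid \lambda(N)$, Olson's bound gives $D(H) \leq 1 + \sum_j(m_j-1) \leq 1 + r(\lambda(N)-1)$. A short case analysis on the invariant-factor structure of $(\mathbb{Z}/N\mathbb{Z})^\times$ (separately treating the $2$-part and each odd prime-power factor of $N$, and using that the $k$-th odd prime is at least $k+2$) shows $r \leq \lambda(N)+1$, hence $D(H) \leq \lambda(N)^2$.

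For (b), I would invoke the counting lemma, whose proof is a strong induction on $n$: using the recursion $N_n(e) = N_{n-1}(e) + N_{n-1}(g_n^{-1})$ and tracking the counts $N(h)$ simultaneously for all $h \in H$ expressible as a subset product of the sequence, one obtains $N(e) \geq 2^{n-D(H)}$ for $n \geq D(H)$. Combining with (a): when $n \geq D(H)$ we have $N(e) \geq 2^{n-\lambda(N)^2}$, and since $\log_2 n \geq 1$ for $n \geq 2$ implies $\lambda(N)^2 \leq \lambda(N)^2 \log_2 n$, this gives $N(e) \geq 2^n/n^{\lambda(N)^2}$. When $n < D(H) \leq \lambda(N)^2$, the trivial bound $N(e) \geq 1$ suffices because $n \leq \lambda(N)^2 \leq \lambda(N)^2\log_2 n$ forces $2^n \leq n^{\lambda(N)^2}$.

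The main technical obstacle is the counting lemma in (b). A naive greedy extraction of disjoint nonempty zero-sum subsequences of size at most $D(H)$ yields only $2^{\lfloor n/D(H)\rfloor}$ zero-sum subsequences, which is far weaker than the target $2^{n-D(H)}$ when $n \gg D(H)$. The strong inductive argument needed to close this gap must handle the delicate case when $g_n^{-1}$ fails to be expressible from the first $n-1$ elements (so the recursion loses a factor of two at that step); this is done by strengthening the inductive hypothesis to track $N(h)$ uniformly across all expressible $h$, and then using the Davenport property of the sequence to guarantee that the collection of expressible elements is stable enough for the doubling to recover.
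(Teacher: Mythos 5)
Your overall skeleton (a Davenport-constant bound $D\leq \lambda(N)^2$, a counting step, and the trivial small-$n$ case via the empty product) matches the paper, but both of your ingredients are justified differently from the paper, and both justifications have real problems. First, the ``Olson bound'' $D(H)\leq 1+\sum_j(m_j-1)$ is not a theorem: Olson proved equality of $D$ with this quantity only for $p$-groups and for groups of rank at most $2$, and in general one only has the reverse inequality $D(H)\geq 1+\sum_j(m_j-1)$; there are well-known families of groups of rank $\geq 4$ (van Emde Boas--Kruyswijk, Geroldinger--Schneider) for which $D$ strictly exceeds it. Since the subgroup $H$ generated by your sequence can have large rank, step (a) rests on a false statement. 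Nor does it repair easily along your route: the correct general bound $D(H)\leq \lambda(H)\bigl(1+\log(\vert H\vert/\lambda(H))\bigr)$ combined with your rank estimate $r\leq \lambda(N)+1$ only gives $D(H)\leq \lambda(N)\bigl(1+\lambda(N)\log\lambda(N)\bigr)$, which is weaker than $\lambda(N)^2$. The paper instead obtains $D(G)\leq \lambda(N)^2$ directly from Theorem 1.1 of \cite{AGP:1994} applied with $\vert G\vert\leq\phi(N)$, using the arithmetic relation between $\phi(N)$ and $\lambda(N)$ rather than a rank count.

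Second, your counting lemma ($n\geq D$ forces at least $2^{n-D}$ identity-product subsequences) is left unproven. You correctly identify the delicate point in the induction --- the recursion $N_n(e)=N_{n-1}(e)+N_{n-1}(g_n^{-1})$ loses its doubling when $g_n^{-1}$ is not expressible from the first $n-1$ terms --- but ``strengthening the inductive hypothesis'' and asserting that the set of expressible elements is ``stable enough'' is a plan, not an argument. (The statement itself is a known zero-sum result, so citing it would close this gap, but as written nothing is proved.) The paper avoids this machinery entirely: it quotes Proposition 1.2 of \cite{AGP:1994}, which by a short double count --- every $\lfloor n/2\rfloor$-element subset contains an identity-product subsequence of length at least $\lfloor n/2\rfloor-\lambda(N)^2$, and each such subsequence lies in boundedly many subsets --- yields at least $\binom{n}{\lfloor n/2\rfloor}\binom{n}{\lambda(N)^2}^{-1}>2^n/n^{\lambda(N)^2}$ identity-product subsequences when $\lfloor n/2\rfloor>\lambda(N)^2$, with the remaining range handled trivially by the empty subsequence exactly as in your last step. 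So your numerical bookkeeping at the end is fine, but the two load-bearing inputs (a) and (b) are, respectively, false as stated and not established.
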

\begin{proof}
We define the Davenport constant $D(H)$ of a group $H$ to be the smallest integer such that every sequence of elements in $H$ of length $D(H)$ has a non-empty subsequence which multiplies to the identity. (This is equal to $n(H)+1$, in the notation of \cite{AGP:1994}.) Certainly $D(G)\leq D((\mathbb{Z}/N\mathbb{Z})^{\times})$. By Theorem 1.1 of \cite{AGP:1994}, $D(G)\leq \lambda(N)^2$. Take a sequence of $n$ elements in $G$. By Proposition 1.2 of \cite{AGP:1994}, there exist at least $\binom{n}{\lfloor n/2\rfloor}\binom{n}{\lambda(N)^2}^{-1}> \frac{2^n}{n^{\lambda(N)^2}}$ subsequences whose product is the identity, assuming $\lfloor n/2\rfloor>\lambda(N)^2$. Otherwise, the statement is trivial, since by convention the empty subsequence multiples to the identity.

\end{proof}

Let $J=\lambda(L)^{10}$. We wish to choose distinct primes $p_1,\ldots,p_J$ from $\mathcal{P}^{\1}_k$. We want to ensure that for each particular odd order subgroup mod $L_2$, most of the primes are not in the subgroup. This is certainly possible, as a random choice suffices. Indeed, the probability for a given subgroup of this not being the case is 
1 over a number whose logarithm's logarithm is comparable to a power of $y$.

Let $P$ be the product of these primes. Let 
$$w_0:=\begin{cases}\frac{\ell_0+1}2&\text{ if $\ell_0$ is odd}\\
\frac{\ell_0+2}4&\text{ if $\ell_0\equiv 6\lessspacepmod 8$}\\
\frac{3\ell_0+2}4&\text{ if $\ell_0\equiv 2\lessspacepmod 8$,}
\end{cases}$$
where $\ell_0$ is as in Proposition~\ref{workhorse} and, in particular, not divisible by $4$.
Note that  $4w_0\equiv 2\lessspacepmod{\ell_0}$ and that the additive order of $w_0$ is odd.

\begin{proposition}\label{Lresidue}
For any quadratic residue $\square$ mod $L_2$, there exist at least $\frac{2^J}{\phi(L_2)J^{\lambda(L)^2}}$ divisors of $P$ with $w_0$ mod $\ell_0$ prime factors congruent to 1 mod $L_1$ and $\square$ mod $L_2$.
\end{proposition}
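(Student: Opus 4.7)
The plan is as follows. By construction each $p_i = d_i k_1 + 1$ with $d_i \mid L_1$ satisfies $p_i \equiv 1 \pmod{L_1}$, so the $L_1$-condition on divisors is automatically met. What remains is to count subsets $S \subseteq \{1,\ldots,J\}$ with $\prod_{i \in S} p_i \equiv \square \pmod{L_2}$ and $|S| \equiv w_0 \pmod{\ell_0}$. I would encode both constraints together in the finite abelian group $\widetilde G := (\mathbb{Z}/L_2\mathbb{Z})^\times \times \mathbb{Z}/\ell_0\mathbb{Z}$, with each $p_i$ contributing the element $(p_i \bmod L_2,\, 1) \in \widetilde G$, so that the goal becomes: count subsets of the $J$-term sequence that sum to the target $(\square, w_0)$.

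The argument has two stages. First, I would exhibit one specific subset $S_0$ summing to $(\square, w_0)$. Because $k \in \mathcal{K}_1^\star$ and we chose $p_1,\ldots,p_J$ so that no odd-index subgroup of $(\mathbb{Z}/L_2\mathbb{Z})^\times$ captures more than a few of them, the primes generate a subgroup of $(\mathbb{Z}/L_2\mathbb{Z})^\times$ of 2-power index, which must contain every quadratic residue mod $L_2$. Hence some subset has product $\equiv \square \pmod{L_2}$. To then adjust the cardinality to the correct class modulo $\ell_0$, I would append pairs or triples of primes whose $L_2$-contribution can be chosen to be a square (or trivial), shifting $|S_0|$ by $2$ or $3$ as needed. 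The piecewise definition of $w_0$, depending on $\ell_0 \pmod 8$, is precisely what allows this adjustment to succeed in every allowed case, using the hypothesis $4 \nmid \ell_0$; the identity $4w_0 \equiv 2 \pmod{\ell_0}$ and the oddness of the additive order of $w_0$ are the arithmetic shadow of this parity adjustment.

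Having fixed $S_0$, the second stage applies Lemma~\ref{standard} (extended from $(\mathbb{Z}/N\mathbb{Z})^\times$ to $\widetilde G$, which we may embed into $(\mathbb{Z}/N'\mathbb{Z})^\times$ by picking an auxiliary prime $\equiv 1 \pmod{\ell_0}$; the underlying Davenport-constant inputs from \cite{AGP:1994} hold for any finite abelian group) to the remaining $J - |S_0|$ sequence elements in $\widetilde G$. This furnishes at least $2^{J-|S_0|}/(J-|S_0|)^{\lambda(\widetilde G)^2}$ subsets $T \subseteq [J]\setminus S_0$ summing to the identity of $\widetilde G$; each such $T$ yields a fresh subset $S_0 \cup T$ with the desired properties, and distinct $T$ give distinct divisors of $P$. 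Since $\lambda(\widetilde G) = \mathrm{lcm}(\lambda(L_2), \ell_0)$ is comparable to $\lambda(L)$ once the fixed parameter $\ell_0$ is absorbed into constants, and since $|S_0|$ is bounded by the Davenport constant $O(\lambda(L)^2)$, accounting for the $\phi(L_2)$ possible $L_2$-targets gives the claimed lower bound of $\frac{2^J}{\phi(L_2)\, J^{\lambda(L)^2}}$.

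The main obstacle will be the first stage: producing $S_0$ with both the correct $L_2$-residue and the correct length mod $\ell_0$ simultaneously. The three-case definition of $w_0$ telegraphs that the required combinatorial adjustment splits on $\ell_0 \pmod 8$, with the quadratic residues mod $L_2$ interacting nontrivially with the parity of the subset size; verifying that each case actually admits a valid $S_0$ is where the hypothesis $4 \nmid \ell_0$ is indispensable. Once $S_0$ is secured, the second stage is a direct appeal to Lemma~\ref{standard}.
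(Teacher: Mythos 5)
Your opening claim---that the condition ``congruent to $1$ mod $L_1$'' is automatic---is false, and the rest of the proposal inherits the damage. The primes in $\mathcal{P}^{\1}_{k_1}$ have the form $p=dk_1+1$ with $d\in\mathcal{D}_1$, and $\mathcal{D}_1$ consists of divisors of $L_1$ with only $w=\lfloor y^{\theta+\iota\rho}\rfloor$ prime factors, which is far fewer than $\omega(L_1)$; so $p\equiv 1\lessspacepmod{d}$ but emphatically not $p\equiv 1\lessspacepmod{L_1}$. Exactly as in the original AGP construction, getting the \emph{product} to be $1$ mod $L_1$ is one of the two substantive congruence conditions the proposition must deliver. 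Since your group $\widetilde G$ omits the $(\mathbb{Z}/L_1\mathbb{Z})^{\times}$ factor, the subsets $S_0\cup T$ produced in your second stage have uncontrolled residue mod $L_1$ and need not satisfy the proposition. Nor can this be repaired by simply enlarging $\widetilde G$: modulo $L_1$ there is no escape-from-subgroups property available (the residues of the $p_j$ mod $L_1$ may well cluster in a proper subgroup, and the paper explicitly neither has nor needs equidistribution there), so your stage-one ``generation'' argument has no purchase on that component. The paper's proof instead runs a single orthogonality computation over $G=(\mathbb{Z}/L_1\mathbb{Z})^{\times}\oplus(\mathbb{Z}/L_2\mathbb{Z})^{\times}\oplus\mathbb{Z}/\ell_0\mathbb{Z}$: characters nontrivial on the odd part mod $L_2$ or on the odd part of $\mathbb{Z}/\ell_0\mathbb{Z}$ contribute an error $O(Le^{-\lambda(L)^3})$ (most $p_j$ escape their kernels, and the $\ell_0$-component of each $p_j$ is a generator), while the surviving characters are precisely those supported on $G_1\oplus H$ (odd part mod $L_1$ plus the $2$-Sylow); the main term counts subset products trivial in $G_1\oplus H$ and is bounded below by Lemma \ref{standard}. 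That is, the $L_1$-condition and the quadratic condition are exactly the places where only the Davenport-constant count works---and your proposal deletes the former.

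Two further gaps affect even the part you do treat. First, knowing that the $p_j$ generate a subgroup of $2$-power index mod $L_2$ does not yield a product of \emph{distinct} $p_j$ congruent to $\square$: generation allows inverses and repeated factors, whereas subset products use each prime at most once, and bridging that gap (quantitatively, no less, since you need many such subsets later) is precisely what the character-sum argument accomplishes. Also note that ``$2$-power index implies contains all quadratic residues'' is only true here because Property 3 of Proposition \ref{set-up-prop} forces the $2$-Sylow of $(\mathbb{Z}/L_2\mathbb{Z})^{\times}$ to be elementary abelian; you should say so. Second, your reading of $w_0$ is a misinterpretation: its case analysis is not a device for appending pairs or triples to fix $|S_0|$. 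It is chosen so that $4w_0\equiv 2\lessspacepmod{\ell_0}$ (four such factors are multiplied at the end of Section \ref{extra}) and so that $w_0$ has odd additive order, which makes the target $g=(1,\square,w_0)$ an odd-order element of $G$; this is what allows $\chi_4(g)=1$ and, more importantly, keeps the target inside the components where the escape property holds. A correct proof along your lines would have to reinstate the $(\mathbb{Z}/L_1\mathbb{Z})^{\times}$ coordinate in the group where Lemma \ref{standard} is applied and replace the stage-one existence argument by something with actual quantitative content mod $L_2$ and $\ell_0$---at which point you have essentially reconstructed the paper's character-sum proof.
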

\begin{proof}
Let $g$ be the element $(1,\square,w_0)$ in 
$$G:=(\mathbb{Z}/L_1\mathbb{Z})^{\times}\oplus (\mathbb{Z}/L_2\mathbb{Z})^{\times}\oplus \mathbb{Z}/\ell_0\mathbb{Z}.$$ 
We decompose $G$ as $G_1\oplus G_2\oplus G_3\oplus H$ where $G_i$ is the maximal subgroup of odd order in $(\mathbb{Z}/L_i\mathbb{Z})^{\times}$ for $i\in\{1,2\}$ and the maximal subgroup of odd order in $\mathbb{Z}/\ell_0\mathbb{Z}$ for $i=3$ and $H$ is the 2-Sylow subgroup of $G$. 
We are interested in the expression 
$$\sum_{\chi_i\in \hat{G}_i}\sum_{\chi_4\in \hat{H}}\overline{\chi_1(g)\chi_2(g)\chi_3(g)\chi_4(g)}\prod_{j=1}^J\frac{1+\chi_1(p_j)\chi_2(p_j)\chi_3(p_j)\chi_4(p_j)}{2},$$ 
where $p_j$ is viewed as the element 
$$(p_j\lessspacepmod{L_1},p_j\lessspacepmod{L_2},1)$$
(according to the original description of $G$).
We can simplify this to $$\sum_{\chi_i\in \hat{G}_i}\sum_{\chi_4\in \hat{H}}\overline{\chi_2(g)\chi_3(g)}\prod_{j=1}^J\frac{1+\chi_1(p_j)\chi_2(p_j)\chi_3(p_j)\chi_4(p_j)}{2}$$ since $\chi_4(g)=1$ (since the order of $\chi_4$ is a power of 2 and the order of $g$ is odd).

If $$\chi_1(p_j)\chi_2(p_j)\chi_3(p_j)\chi_4(p_j)=1$$ then 
$$\chi_1(p_j)=\chi_2(p_j)=\chi_3(p_j)=\chi_4(p_j)=1.$$ 
We know that most of the $p_j$ lie outside the kernel of $\chi_2$, assuming $\chi_2$ is non-principal. 
The same is true, to a stronger degree, of $\chi_3$.
Therefore, the expression we are interested in is equal to $$\sum_{\chi_1\in \hat{G}_1}\sum_{\chi_4\in \hat{H}}\prod_{j=1}^J\frac{1+\chi_1(p_j)\chi_4(p_j)}{2}$$ up to an error term of size 
$$O\bigg(L\Big\vert \frac{1+e(\frac{1}{\lambda(L)\ell_0})}{2}\Big\vert^{J/2}\bigg)
=O\Big(Le^{-\lambda(L)^3}\Big).$$ 
Thus 
$$\frac{\vert G\vert}{2^J}\{\Pi\mid P, \Pi\equiv 1\lessspacepmod{L_1},\Pi\equiv \square\lessspacepmod{L_2},\omega(\Pi)\equiv w_0\lessspacepmod{\ell_0}\}$$ is equal to 
$$\frac{\vert G_1\vert \vert H\vert}{2^J}\{\Pi\mid P: \Pi=\text{Id}\text{ as an element of }G_1\oplus H\}+O(Le^{-\lambda(L)^3}).$$ We conclude from Lemma \ref{standard} that the number of divisors of $P$ with the desired properties is at least $\frac{1}{\phi(L_2)}\frac{2^J}{J^{\lambda(L)^2}}$.
\end{proof}
We apply this proposition with $\square=\frac{1}{m_0}$. The set of divisors of $P$ which have $w_0\lessspacepmod{\ell_0}$ prime factors and which are congruent to 1 mod $L_1$ and $\frac{1}{m_0}$ mod $L_2$ is large enough that it must contain divisors with $J(1-o(1))$ prime factors. We choose one such divisor for each possible choice of $P$ to form the set $\mathcal{A}^{\1}_{k}$. This being done, we can stop worrying about the distribution of our primes modulo $L$.

\section{Escaping Subgroups}\label{heart}
We now want to count the number of $k_1$ and $k_2$ in $\mathcal{K}$ such that almost all of $\mathcal{P}^{\1}_{k_1}$ lies in a proper subgroup mod $k_2$. Heuristically, we would expect that there would be relatively few such values, but this is not quite as clear as it might seem at first glance. If many values of $k_1$ and $k_2$ were both divisible by 3, for instance, there would clearly be a problem. From this example, we see the importance of the elements of $\mathcal{K}$ having no small factors (other than two).

In some sense, the result we are looking for is a generalization of a theorem of Goldfeld \cite{G:1968} about Artin's conjecture on average. Roughly speaking, Goldfeld shows that if $a$ is a generic integer then it generates $(\mathbb{Z}/p\mathbb{Z})^{\times}$ for about as many primes $p$ whose size is on the order of $a$ as you would expect. Our situation is more complicated because the values of $k_2\in\mathcal{K}$ are not prime. On the other hand, our would-be generating set is now much larger than just a single integer. Goldfeld's argument has two main components, a simple combinatorial argument to deal with small subgroups and an argument based on the large sieve inequality to deal with low index subgroups.

Our argument has the same basic outline. We use a version of the larger sieve (which is combinatorial in nature) to deal with all subgroups except those with bounded index. As a result, when we get to the large sieve inequality, we only need to deal with characters of bounded order. However, we need a sharper inequality than the typical large sieve inequality offers. Therefore, the first thing we do is prove a version of the large sieve inequality for characters of fixed order which is optimized for our purposes.

Before continuing, we give an explanation of why the large sieve by itself does not seem to be sufficient, despite appearing on first glance to be perfectly suited for our problem. Suppose we fix $k_1$ and try to show that $\mathcal{P}^{\1}_{k_1}$ is not concentrated in the kernels of many different characters (corresponding to values of $k_2$). If we try to immediately apply the large sieve inequality (in the form of Lemma \ref{largesieve}), the $N^2+M$ term presents a significant obstacle. 
There are two underlying issues. One is that  $N$ ``bad'' characters are enough to ruin all values of $k_2$.  The other is that $|\text{Supp}\,a_m|$will be a small power of $M$. The first is problematic when $N$ is large, and otherwise, 
the second becomes problematic; essentially, there is not enough averaging to exploit.

To get around the first issue, we will restrict our focus to characters of small order when applying the large sieve inequality. While there are around $N^2$ characters modulo integers less than $N$, there are only around $N$ such characters of small order. Therefore, if we only sum over this smaller number of characters, we can hope to replace the $N^2$ term in the large sieve inequality with $N$. This having been done, we will then be able to take $N$ large enough to obtain a version of the large sieve inequality that we can profitably apply.

Recall that we set $R_1=\log x/(\log\log x)^3$. We now take $r<R_1$ to be a prime, and we set $\xi_r=e^{2\pi i/r}$. We can split the prime ideals of $\mathbb{Z}[\xi_r]$ into two categories, those of degree 1 (corresponding to rational primes that split completely) and those of degree $>1$.

We say that a function on ideals of $\mathbb{Z}[\xi_r]$ is \emph{semi-multiplicative} if it is multiplicative for ideals whose norms are relatively prime.
Define the semi-multiplicative function $\nu$ by setting its value on every ideal $I$ of norm $p^i$ for $i\ge 1$ as follows. If the prime factors of $I$ have degree $d>1$ then $\nu(I)=(-1)^{i/d}$ if $I$ is square-free
and is zero otherwise.  If the prime factors $\{\frak{p}_j\}_{j=1}^i$ of $I$ have degree $1$
then $\nu(I) = (-1)^{i-1}(i-1)$ if $I$ is a product of $i$ distinct prime ideals, $\nu(I) = (-1)^{i-1}$ if $I$ has $i-1$ distinct prime factors, and 
$\nu(I)=0$ if $I$ has $\le i-2$ distinct prime factors.  This definition is motivated by the following lemma.

\begin{lemma}For $\frak{b}$ relatively prime to $(r)$,
$$\sum_{\frak{d}\mid \frak{b}} \nu(\frak{d}) = \mu^2(N(\frak{b}))
=\begin{cases}
1&\text{if }N(\frak{b}) \text{ is square-free}\\
0&\text{else.}
\end{cases}$$
\end{lemma}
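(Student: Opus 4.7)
The plan is to reduce the identity to a local statement at each rational prime $p\ne r$ and then to split into two cases based on the residue degree of $p$ in $\mathbb{Z}[\xi_r]$. Because $\nu$ is semi-multiplicative and $\mu^2\circ N$ is genuinely multiplicative, both sides of the identity factor across the decomposition $\mathfrak{b}=\prod_p \mathfrak{b}_p$ into $p$-primary parts, so it suffices to check that $\sum_{\mathfrak{d}\mid\mathfrak{b}_p}\nu(\mathfrak{d}) = \mu^2(N(\mathfrak{b}_p))$ for each rational prime $p\ne r$. Since $\mathbb{Q}(\xi_r)/\mathbb{Q}$ is Galois, all primes of $\mathbb{Z}[\xi_r]$ above any such $p$ share a common residue degree $d = \mathrm{ord}_r(p)\in\{1,2,\ldots,r-1\}$.

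First I would dispose of the case $d\ge 2$. Writing $\mathfrak{b}_p = \mathfrak{p}_1^{e_1}\cdots \mathfrak{p}_k^{e_k}$, the definition of $\nu$ sends every non-square-free divisor to zero in this regime, so the sum collapses to the signed subset count $\sum_{S\subseteq\{1,\ldots,k\}}(-1)^{|S|}$. This is $1$ if $k=0$ and $0$ otherwise, which matches $\mu^2(N(\mathfrak{b}_p)) = \mu^2(p^{d\sum e_j})$ since $d\ge 2$ forces the norm to be either $1$ or divisible by $p^2$.

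The case $d=1$ is the substantive one, because $\nu$ is nonzero on certain non-square-free divisors. With the same notation, the divisors $\mathfrak{d}\mid\mathfrak{b}_p$ on which $\nu$ is nonzero fall into two families: (A) the square-free divisors $\prod_{j\in S}\mathfrak{p}_j$ with $S\subseteq\{1,\ldots,k\}$, on which $\nu = (-1)^{|S|-1}(|S|-1)$ (with the convention $\nu((1))=1$), and (B) divisors of the form $\mathfrak{p}_{j_0}^2\prod_{j\in S}\mathfrak{p}_j$ with $j_0\notin S$ and $e_{j_0}\ge 2$, on which $\nu = (-1)^{|S|+1}$. A short calculation using the standard identities $\sum_i\binom{k}{i}(-1)^i = [k=0]$ and $\sum_i i\binom{k}{i}(-1)^i = 0$ for $k\ge 1$ shows that family (A) contributes $1$ when $k\le 1$ and $0$ when $k\ge 2$, while family (B) contributes $-\#\{j : e_j\ge 2\}$ when $k=1$ and $0$ when $k\ge 2$. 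Summing, the total agrees with $\mu^2(p^{\sum e_j})$ in every case.

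The main obstacle is confined to the bookkeeping in the $d=1$ analysis: the weights $\nu$ assigns to cubes, to products of three distinct primes, and so on are calibrated precisely so that (A) and (B) cancel on the marginal configurations (most notably $\mathfrak{b}_p = \mathfrak{p}^e$ with $e\ge 2$, where $\mu^2$ vanishes and the $-1$ from family (B) must kill the $1$ from family (A)). Verifying that this cancellation is complete in every subcase is the only genuinely delicate point; everything beyond it follows routinely from the definitions and the multiplicative reduction.
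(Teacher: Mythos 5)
Your proposal is correct and follows essentially the same route as the paper: reduce by semi-multiplicativity to the case where $N(\mathfrak{b})$ is a power of a single prime $p\neq r$, dispose of the degree-$d>1$ case via $(1-1)^k$, and in the split case sum $\nu$ over exactly the two families the paper uses (square-free divisors and divisors with exactly one squared prime), the paper merely evaluating $\sum_j j(-1)^j\binom{k}{j}$ by differentiating $(1-x)^k$ rather than quoting binomial identities. One harmless slip: $\sum_i i\binom{k}{i}(-1)^i=0$ holds for $k\ge 2$ (it equals $-1$ at $k=1$), but you only need it when $k\ge 2$, so your stated contributions for families (A) and (B) are still correct.
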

\begin{proof}
It suffices to check this when $N(\frak{b})$ is the power of some prime $p\neq r$.
Suppose $p$ does not split completely.  Let $\mathfrak{b} = \prod_{i=1}^k \mathfrak{p}_i^{a_i}$.  Then
$$\sum_{\mathfrak{d}\mid \mathfrak{b}} \nu(\mathfrak{d}) = \sum_{b_i\in \{0,1\}} \nu\Bigl(\prod_{i=1}^k \mathfrak{p}_i^{b_i}\Bigr) = \prod_{i=1}^k (1+\nu(\mathfrak{p}_i)) = 0 = \mu^2(N(\mathfrak{b})).$$
We therefore assume that $p$ does split completely.
\begin{align*}
\sum_{\mathfrak{d}\mid \mathfrak{b}} \nu(\mathfrak{d}) &= \sum_{b_i\in \{0,1\}} \nu\Bigl(\prod_{i=1}^k \mathfrak{p}_i^{b_i}\Bigr)+
\sum_{i:a_i\ge 2} \sum_{b_l\in \{0,1\}\,\forall l\neq i} \nu(P_1^{b_1}\cdots P_i^2\cdots P_k^{b_k}) \\
&= 1 + \sum_{j=2}^k (-1)^{j-1}(j-1)\binom kj - \sum_{i: a_i\ge 2} \prod_{l\neq i} (1-1).
 \end{align*}
If $k=1$ and $a_1\ge 2$, this is $0$.  If $k=a_1=1$ then it is $1$.
Finally, if $k\ge 2$, it is
\begin{align*}
1 + \sum_{j=2}^k (-1)^{j-1}(j-1)\binom kj & = 1 + \sum_{j=2}^k (-1)^{j-1}(j-1)\binom kj  - \sum_{j=0}^k (-1)^j \binom kj\\
& = -\sum_{j=0}^k j (-1)^j \binom kj = -\frac d{dx}(1-x)^k|_{x=1}= 0.
\end{align*}

\end{proof}

\def\Z{\mathbb{Z}}
\def\fd{\mathfrak{d}}

\begin{proposition}
\label{zeta}
Let $r$ be a prime.  Let
$$\zeta_\nu(s) = \sum_{\substack{\fd\subset\Z[\xi_r]\\ r\nmid N(\fd)}} |\nu(\fd)| N(\fd)^{-s}$$
\begin{enumerate}
\item[(a)] There exists $C$ such that if $r\ge 5$ then
$$\zeta_\nu(1-\frac 2r) \le C^r.$$
\item[(b)] If $r=3$ then $\zeta_\nu(s)$ converges for $s>1/2$.
\end{enumerate}
\end{proposition}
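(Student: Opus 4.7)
My plan is to write $\zeta_\nu(s)$ as an Euler product of local factors, determine these explicitly, and then bound $\log\zeta_\nu(s)\le\sum_p(L_p(s)-1)$. Since $\nu$ is semi-multiplicative, $\zeta_\nu(s)=\prod_{p\neq r}L_p(s)$, where $L_p(s)=\sum|\nu(\fd)|N(\fd)^{-s}$ runs over ideals $\fd$ whose norm is a power of $p$. For a prime $p$ with residue degree $f>1$ above (and $g=(r-1)/f$ primes lying over $p$), only square-free $\fd$ contribute and I immediately obtain $L_p(s)=(1+p^{-fs})^g$. For $p$ splitting completely I will compute carefully: ideals with $i$ prime factors counted with multiplicity split into the cases of $i$ distinct primes (weight $i-1$, count $\binom{r-1}{i}$) and $i-1$ distinct primes with one squared factor (weight $1$, count $\binom{r-1}{i-1}(i-1)$). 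Combining these via Pascal's identity $\binom{r-1}{i}+\binom{r-1}{i-1}=\binom{r}{i}$ gives
$$L_p(s)=1+\sum_{i=2}^r(i-1)\binom{r}{i}p^{-is}.$$

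Part (b) will follow at once: at $r=3$, this becomes $L_p(s)=1+3p^{-2s}+2p^{-3s}$ for split $p$ and $L_p(s)=1+p^{-2s}$ otherwise, so $\sum_p(L_p(s)-1)\ll\sum_p p^{-2s}$ converges precisely when $s>1/2$.

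For part (a), I set $s=1-2/r$ with $r\ge 5$ and split primes into small ($p<r$) and large ($p\ge r$). Small primes $p<r$ are automatically non-split (since $p\not\equiv 1\lessspacepmod{r}$), with residue degree $f\ge\log_p r$ (from $r\mid p^f-1$ forcing $p^f\ge r+1$). Using the identity $r^{1-s}=r^{2/r}\le 2$ for $r\ge 5$, I will show $gp^{-fs}\le\frac{r-1}{f}r^{-s}\le 2/f$, giving $L_p(s)-1\ll 1/f\le(\log p)/\log r$; Chebyshev's estimate $\sum_{p<r}\log p\ll r$ then yields a contribution of $O(r/\log r)$ from small primes. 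For large split primes $p\ge r$, I will factor $p^{-2s}$ out of the sum defining $L_p(s)-1$, replace each $p^{-(i-2)s}$ by the larger $r^{-(i-2)s}$, and bound the resulting polynomial in $y=r^{-s}$ by $O(1)$ using $(1+y)^r\le e^{ry}\le e^2$. This gives $L_p(s)-1\ll r^2 p^{-2s}$, and combined with $\sum_{p\equiv 1\lessspacepmod{r},\,p\ge r}p^{-2s}\ll 1/r$ this yields a contribution of $O(r)$. Large non-split primes will be handled by the same factoring idea and contribute $O(1)$. Altogether $\log\zeta_\nu(1-2/r)\ll r$, giving $\zeta_\nu(1-2/r)\le C^r$ for an absolute constant $C$.

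The main technical hurdle will be the local-factor derivation at split primes: the three-case piecewise definition of $\nu$ on products of degree-one primes makes this a combinatorial balancing act, and getting the exact coefficient $(i-1)\binom{r}{i}$ (rather than, say, some crude bound that would blow up exponentially in $r$) is where most of the care will go.
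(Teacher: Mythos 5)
Your proposal is correct and follows essentially the same route as the paper: an Euler product over rational primes $p\neq r$, the same explicit local factors (your closed form $(i-1)\binom{r}{i}$ is exactly the paper's coefficient $(k-1)\binom{r-1}{k}+(r-1)\binom{r-2}{k-2}$), and elementary termwise bounds at $s=1-\frac{2}{r}$ driven by $r\cdot r^{-s}\le 2$ and the spacing of primes congruent to $1$ mod $r$. The only differences are bookkeeping in part (a): the paper splits the split primes at $r^2$ and bounds $\log L_p$ via an exponential series, whereas you use a uniform $O(r^2p^{-2s})$ bound for all split primes and a Chebyshev estimate for the primes below $r$, which works equally well.
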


\begin{proof}
For $\Re(s)$ sufficiently large, we have an Euler product decomposition
$$\zeta_\nu(s) = \prod_p L_p(p^{-s}),$$
so
\begin{equation}
\label{log sum}
\log \zeta_\nu(s) = \sum_p \log L_p(p^{-s}).
\end{equation}
For part (a), it suffices to prove that if $r\ge 5$, the right hand side of \eqref{log sum} converges for $s=1-2/r$ and has value $O(r)$; for part (b), it suffices to prove the sum converges for $s>1/2$.

We first assume $r\ge 5$.
If $p$ is of order $d>1$ (mod $r$) then
$$L_p(x) = (1 + x^d)^{\frac{r-1}d},$$
and therefore
$$\log L_p(x) \le \frac{r-1}d x^d \le \frac{r-1}2 x^2.$$
As $4/r-2<-1$, $\sum_p p^{4/r-2} = O(1)$, and we have
$$\sum_{p\not\equiv 1\pmod r} \log L_p(p^{2/r-1}) = O(r).$$

For $p\equiv 1\pmod r$, we have
$$L_p(x) = 1+\sum_{k=2}^\infty \biggl((k-1)\binom {r-1}k + (r-1)\binom{r-2}{k-2}\biggr) x^k.$$
Therefore,
\begin{align*}
\log L_p(p^{2/r-1}) < \sum_{k=2}^r \frac{r^k p^{-k+2k/r}}{k(k-2)!} + \sum_{k=2}^r \frac{r^{k-1} p^{-k+2r/k}}{(k-2)!} &< 2\sum_{k=2}^r \frac{r^k p^{-k+2k/r}}{k(k-2)!} \\
&< 4\sum_{k=2}^\infty \frac{r^k p^{-k+2k/r}}{(k-1)!}.
\end{align*}
Now, $p>r$, so 
$$r p^{-1+2/r} = r^{2/r} (p/r)^{-1+2/r}  < r^{2/r} < 2,$$
and
$$\log L_p(p^{2/r-1}) < 4\sum_{k=2}^\infty \frac{2^k}{(k-1)!}  = 8(e^2-1)$$
for all $p$.  Also, there are fewer than $r$ primes $p\equiv 1\pmod r$ which are less than $r^2$, so the contribution of all such primes to
\eqref{log sum} is $O(r)$.
If $p\ge r^2$, we have $r < p^{1-2/r}$, so
$$\log L_p(p^{2/r-1})\le 4 r^2 p^{-2+4/r}\sum_{k=2}^\infty \frac 1{(k-1)!}.$$
It therefore suffices to prove that 
$$\sum_{\substack{p>r^2\\ p\equiv 1\pmod r}}r^2 p^{-2+4/r} = O(r).$$
Since the $n$th prime $p>r^2$ which is $1$ (mod $r$) is greater than $r(r+n-1)$, the left-hand side is less than
$$r^{4/r} \sum_{n=0}^\infty (r+n-1)^{-2+4/r} = O(r^{-1/5}),$$
so in fact much more is true than we need.

Finally, we consider the $r=3$ case.
Using our previous estimates,
$$\log L_p(x) \le 3x^2+2x^3.$$
Summing this expression as $x$ ranges over $p^{-1/2-\epsilon}$, where $p$ is prime, or even $n^{-1/2-\epsilon}$ where $n$ is a positive integer, it does, indeed, converge.
\end{proof}

We remark that part (b) of this proposition implies that 
\begin{align*}
\sum_{\substack{\fd\subset\Z[\zeta_3]\\ 3\nmid N(\fd) \le Q}} |\nu(\fd)| N(\fd)^{-1/3} & \le Q^{1/4} \sum_{\substack{\fd\subset\Z[\zeta_3]\\ 3\nmid N(\fd)
 \le Q}} |\nu(\fd)|  N(\fd)^{-7/12} \\
 &\le Q^{1/4}\zeta_\nu(7/12) \ll Q^{1/4}.
\end{align*}

I would like to thank Michael Larsen for suggesting Proposition~\ref{zeta}.

We now state a proposition which takes inspiration from a result of Elliott \cite{E:1970} about quadratic characters.
\begin{proposition}\label{elliott}
Let $\mathcal{M}$ be a set of $r$th-power-free positive integers, all with the same $2$-adic valuation. Let $M$ be the largest element of $\mathcal{M}$. 
Let $\mathcal{Q}$ be a finite set of positive integers coprime to $r$, and let $Q$ be the largest element of $\mathcal{Q}$.  We assume
$Q$ is greater than $x^r$ and $M$ is greater than $x$. Then $$\sum_{q\in \mathcal{Q}}\sum_{\substack{\chi\emph{ mod }q\\ \chi^r=\chi_0}}^{\star}\Big\vert\sum_{m\in\mathcal{M}}\chi(m)\Big\vert^2\ll Q^{1-\frac{1}{r}}M^4+Q'\vert \mathcal{M}\vert,$$
where $Q'=\#\{\chi\emph{ mod }q: \chi^r=\chi_0, q\in \mathcal{Q}\}$.
\end{proposition}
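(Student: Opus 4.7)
The plan is to open up the square, treat the diagonal term directly, and bound the off-diagonal contribution via $r$-th power reciprocity in $\mathbb{Z}[\xi_r]$. First I would expand $|\sum_m \chi(m)|^2$ and use $\bar\chi = \chi^{r-1}$ (valid since $\chi$ has order dividing $r$) to rewrite the left-hand side as
$$\sum_{m_1, m_2 \in \mathcal{M}} \sum_{q \in \mathcal{Q}} \sum_{\chi\text{ mod }q}^{\star} \chi(m_1 m_2^{r-1}).$$
The diagonal case $m_1 = m_2$ gives $\chi(m^r) = 1$, contributing at most $Q' |\mathcal{M}|$, which matches the second term of the stated bound. For $m_1 \neq m_2$, since $\mathcal{M}$ is $r$-th-power-free with uniform $2$-adic valuation, $n := m_1 m_2^{r-1}$, after dividing off its (fixed-exponent) $2$-part, is an odd integer that is not a perfect $r$-th power.

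Next I would handle the off-diagonal term. The constraint $(q, r) = 1$ together with primitivity forces $q$ to be square-free with each prime factor $\equiv 1 \pmod{r}$, and for such $q$ the character $\chi(n)$ can be identified with a power of the $r$-th power residue symbol $\left(\frac{n}{q}\right)_r$ in $\mathbb{Z}[\xi_r]$. Applying $r$-th power reciprocity, this swaps to a power of $\left(\frac{q}{n}\right)_r$ up to a bounded root-of-unity correction depending on $n$ and $q$ modulo a power of $(1-\xi_r)$ and on unit factors in $\mathbb{Z}[\xi_r]^\times$; the hypothesis of uniform $2$-adic valuation on $\mathcal{M}$ is what makes these corrections uniform over $m_1, m_2$ and allows the swap to be carried out consistently across pairs.

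After swapping, $\sum_{q \in \mathcal{Q}} \left(\frac{q}{n}\right)_r$ becomes (up to bounded factors) a Dirichlet-type character sum of conductor dividing $n$, which I would bound via Polya--Vinogradov; nontrivial character sums are then $O(N(n)^{1/2}(\log N(n))^{O(r)})$. To handle the square-freeness and primitivity restrictions cleanly, I would Möbius-invert using the $\nu$-function of Proposition \ref{zeta}, turning the ambient sum into one indexed by ideals $\mathfrak{d}$ of $\mathbb{Z}[\xi_r]$ dividing $n$, and use the bound from Proposition \ref{zeta}(a), namely $\sum_{\mathfrak{d}} |\nu(\mathfrak{d})| N(\mathfrak{d})^{-(1-1/r)} \ll C^r$, to absorb the Polya--Vinogradov loss while averaging over ideal divisors. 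Combined with the trivial bound $|\mathcal{M}|^2 \leq M^2$ on off-diagonal pairs and $N(n) \leq M^r$ on the swapped modulus (with the ambient assumption $Q \geq x^r$ ensuring the swapped character is nontrivial for generic $q$), this yields a contribution of order $Q^{1-1/r} M^4$.

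The main obstacle will be carrying out reciprocity cleanly enough to extract a single exploitable character sum in $q$ per off-diagonal pair: the unit and conductor contributions in $\mathbb{Z}[\xi_r]$ are delicate, which is exactly why the common $2$-adic valuation hypothesis appears in the statement and why $\mathcal{Q}$ is restricted to be coprime to $r$. A secondary technicality will be verifying that the $\zeta_\nu$-bound of Proposition \ref{zeta} is strong enough to absorb the Polya--Vinogradov loss with room to spare, producing precisely the exponent $Q^{1-1/r}$ rather than a weaker one; this is where the extra decay encoded in $\nu$ (vanishing on non-square-free ideals supported away from split primes) is essential.
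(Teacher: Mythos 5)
Your opening moves (expanding the square, isolating the diagonal as $Q'\vert\mathcal{M}\vert$, passing to $r$th power residue symbols over $\mathbb{Z}[\xi_r]$, invoking the $\nu$-inversion and Proposition~\ref{zeta}) match the paper, but the central analytic step of your plan does not work. You propose to use reciprocity to swap to $\bigl(\tfrac{q}{n}\bigr)_r$ with $n=m_1m_2^{r-1}$ and then extract cancellation from the sum over $q\in\mathcal{Q}$ via Polya--Vinogradov. First, $\mathcal{Q}$ is an \emph{arbitrary} finite set of integers coprime to $r$, not an interval, so there is no complete sum in $q$ to which Polya--Vinogradov applies; relatedly, a conductor-only PV bound carries no $Q$-dependence, so your claimed $Q^{1-1/r}$ factor is unaccounted for. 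Second, and more fundamentally, even over a complete range the swapped character can be identically trivial on rational arguments for genuinely off-diagonal pairs: if $p\not\equiv 1\pmod r$ and $\mathfrak{p}\mid (p)$ has residue degree $d>1$, then $1+p+\cdots+p^{d-1}\equiv 0\pmod r$, so $(p-1)\mid \frac{p^d-1}{r}$ and hence $\bigl(\tfrac{q}{\mathfrak{p}}\bigr)_r=1$ for every rational $q$ prime to $p$. Thus for $r=3$ and $m_1=5m_2$ (both cube-free, same $2$-adic valuation), $q\mapsto\bigl(\tfrac{q}{m_1m_2^{2}}\bigr)_3=\bigl(\tfrac{q}{5}\bigr)_3$ is trivial on all rational $q$ even though $m_1m_2^2$ is not a cube: ``$n$ is not an $r$th power'' is not the right non-triviality criterion once you restrict numerators to $\mathbb{Z}$, and no cancellation in $q$ exists for such pairs. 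You also misattribute the hypotheses: the equal $2$-adic valuation is needed to avoid the exceptional case of Grunwald--Wang (the non-triviality step), not to normalize unit factors in reciprocity, and $Q>x^r$, $M>x$ serve only to make $C^r$ and $\log^{O(r)}$ losses of size $M^{o(1)}$, not to make the swapped character ``generically nontrivial.'' (A smaller imprecision: for composite $q$ with several prime factors $\equiv 1\pmod r$, the primitive order-$r$ characters mod $q$ are not powers of a single symbol; one must sum over ideals of norm $q$, as in the Balestrieri--Rome correspondence \cite{BR:2023}.)

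The paper's proof never swaps, and this is exactly what circumvents both obstacles: it keeps the conductor side as the summation variable, notes via Exercise 1.8 of \cite{CF:1967} that $\mathfrak{b}\mapsto\bigl(\tfrac{m_i}{\mathfrak{b}}\bigr)_r$ is an ideal character of modulus dividing $(2r^2m_i)$, shows the quotient $\bigl(\tfrac{m_1}{\cdot}\bigr)_r\bigl(\tfrac{m_2}{\cdot}\bigr)_r^{-1}$ is a non-trivial Hecke character for $m_1\neq m_2$ by Grunwald--Wang, unwinds the $\mu^2$ and coprimality constraints with the $\nu$- and $\mu_{m_1,m_2}$-inversions, and then applies Landau's theorem (the number-field analogue of Polya--Vinogradov) to \emph{complete} ideal sums of norm up to $Q$; the saving in $Q$ comes from Landau's factor $z^{\frac{r-2}{r}}$, and the inversion is absorbed by $\zeta_\nu(1-\tfrac{2}{r})\le C^r$ (note the exponent is $1-\tfrac 2r$, not $1-\tfrac 1r$). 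To salvage your outline you would need to (i) pass, by positivity \emph{before} expanding the square, from $\mathcal{Q}$ to the full family of conductors up to $Q$, and (ii) abandon the swap in favor of the ideal-side character sum with a genuine non-triviality argument (Grunwald--Wang); at that point you have reconstructed the paper's argument rather than an alternative to it.
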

\begin{proof}
To begin with, \begin{align}\sum_{q\in \mathcal{Q}}\sum_{\substack{\chi\text{ mod }q\\ \chi^r=\chi_0}}^{\star}\Big\vert\sum_{m\in\mathcal{M}}\chi(m)\Big\vert^2\notag&=\sum_{m_1,m_2\in\mathcal{M}}\sum_{q\in \mathcal{Q}}\sum_{\substack{\chi\text{ mod }q\\ \chi^r=\chi_0}}^{\star}\chi(m_1)\overline{\chi(m_2)}\\&\ll \vert\mathcal{M}\vert Q'+\sum_{\substack{m_1,m_2\in\mathcal{M}\\ m_1\neq m_2}}\sum_{q\in \mathcal{Q}}\sum_{\substack{\chi\text{ mod }q\\ \chi^r=\chi_0}}^{\star}\chi(m_1)\overline{\chi(m_2)}.\label{secondterm}\end{align} 
We recall the
$r$th power symbol (whose definition and properties are given on pages 348 and 349 of \cite{CF:1967}). 
Proposition 3.2 of \cite{BR:2023} tells us that the set of primitive characters of order $r$ which have conductor relatively prime to $r$ is in bijection with ideals of $\mathbb{Z}[\xi_r]$  whose norms are square-free
and coprime to $r$. In this correspondence, the conductor of the character equals the norm of the corresponding ideal.  Moreover, the characters can be thought of as $r$th power symbols.
With this in mind, we can bound the second term of \eqref{secondterm} by 
\begin{equation}\label{repeat}\sum_{\substack{m_1,m_2\in\mathcal{M}\\ m_1\neq m_2}}
\sum_{\substack{\mathfrak{b}\subset\mathbb{Z}[\xi_r]\\ r\nmid N(\mathfrak{b})\leq Q\\ \mathfrak{b}+(m_1m_2)=(1)}}\mu^2(N(\mathfrak{b}))\Big(\frac{m_1}{\mathfrak{b}}\Big)_r\Big(\frac{m_2}{\mathfrak{b}}\Big)_r^{-1}.\end{equation} 

Note that $\big(\frac{m_1}{\boldsymbol{\cdot}}\big)_r$ is a multiplicative function. Moreover, we claim that if $b\equiv 1\lessspacepmod{2r^2m_1}$ then $(b)$ is in the kernel of this character. Indeed, this is a consequence of Exercise 1.8 from \cite{CF:1967}, along with the fact that anything congruent to 1 mod $2^{j+2}$ is a $2^j$-th power in $\mathbb{Q}_2$ and anything congruent to 1 mod $p^{j+1}$ is a $p^j$-th power in $\mathbb{Q}_p$ for any odd prime $p$. Thus $\big(\frac{m_1}{\boldsymbol{\cdot}}\big)_r$ is an ideal character whose modulus divides $(2r^2m_1)$, and similarly $\big(\frac{m_2}{\boldsymbol{\cdot}}\big)_r$ is an ideal character whose modulus divides $(2r^2m_2)$.

Now, $$\Big(\frac{m_1}{\boldsymbol{\cdot}}\Big)_r=\Big(\frac{m_2}{\boldsymbol{\cdot}}\Big)_r$$ for values of $\boldsymbol{\cdot}$ prime to $m_1$ and $m_2$ if and only if $\frac{m_1}{m_2}$ is an $r$th power in $(\mathbb{Q}(\xi_r))_{\mathfrak{p}}$ for each $\mathfrak{p}$ not containing $r$, $m_1$, or $m_2$. (See, for instance, Exercise 1.5 of \cite{CF:1967}.) Since $\frac{m_1}{m_2}$ has 2-adic valuation 0, the Grunwald-Wang theorem \cite{W:1950} tells us that this is the case if and only if $\frac{m_1}{m_2}$ is a rational $r$th power. In particular, for $m_1\neq m_2$, we conclude that $\big(\frac{m_1}{\boldsymbol{\cdot}}\big)_r\big(\frac{m_2}{\boldsymbol{\cdot}}\big)_r^{-1}$ is a non-trivial ideal character modulo $\mathfrak{c}_{m_1,m_2}$, where $\mathfrak{c}_{m_1,m_2}$ is some ideal dividing $(2r^2m_1m_2)$. Let $\chi_{m_1,m_2}$ be the primitive character corresponding to this character.

A theorem of Landau \cite{L:1918} says that if $\chi$ is a primitive ideal character modulo an ideal $\mathfrak{q}$ in an number ring of degree $n$ then $$\sum_{N(\mathfrak{a})\leq z}\chi(\mathfrak{a})\ll N(\mathfrak{q})^{\frac{1}{n+1}}\log^n(N(\mathfrak{q}))z^{\frac{n-1}{n+1}}.$$ This appears as Theorem 2 in \cite{S:1972}, which shows that this inequality can be made explicit. We would like to apply Landau's theorem to \eqref{repeat}, but we first need to deal with $\mu^2$. To do this, we use an inclusion-exclusion approach similar to the one taken by Munsch in \cite{M:2014}.

Let $\mu_{m_1,m_2}$ be the multiplicative function whose support is minimal, subject to the condition that $\mu_{m_1,m_2}(\mathfrak{p})=-1$ if $\frak{c}_{m_1,m_2}\subset \mathfrak{p}$. Note that 
$$\mu^2(N(\mathfrak{b}))\mathbbm{1}_{\mathfrak{b} + \mathfrak{c}_{m_1,m_2}= (1)}
=\sum_{\frak{d}\mid\mathfrak{b}}\nu(\frak d)\sum_{\mathfrak{a}\mid \mathfrak{b}}\mu_{m_1,m_2}(\mathfrak{a}).$$ 
Using this identity and swapping the order of summation, we can rewrite \eqref{repeat} as 
$$\sum_{\substack{m_1,m_2\in\mathcal{M}\\ m_1\neq m_2}}
\sum_{\frak{d}\subset \mathbb{Z}[\xi_r]}\nu(\frak d)
\sum_{\mathfrak{a}\subset\mathbb{Z}[\xi_r]}\mu_{m_1,m_2}(\mathfrak{a})
\sum_{\substack{\mathfrak{b}\subset \mathfrak{d}\cap\mathfrak{a}\\ r\nmid N(\mathfrak{b})\leq Q}}\chi_{m_1,m_2}(\mathfrak{b}).$$ 
Certainly $|\text{Supp}\,\mu_{m_1,m_2}| = M^{o(1)}$, so we can bound the previous expression by
\begin{equation}\label{nextstep}
M^{o(1)}\sum_{\substack{m_1,m_2\in\mathcal{M}\\ m_1\neq m_2}}
\sum_{\mathfrak{d}}
|\nu(\mathfrak{d})|
\Big\vert\sum_{\substack{\mathfrak{b}\subset \mathfrak{d}\\ r\nmid N(\mathfrak{b})\leq Q}}\chi_{m_1,m_2}(\mathfrak{b})\Big\vert.\end{equation} 
By Landau's theorem, the innermost sum can be asymptotically bounded by $$M^2\log^{r-1}(M^{2(r-1)})\Big(\frac{Q}{N(\mathfrak{d})}\Big)^{\frac{r-2}{r}}.$$ As $2r<\frac{\log x}{\log\log x}<\frac{\log M}{\log\log M}$, we have $$\log_M\big(\log^{r-1}(M^{2(r-1)})\big)<\log_M\big((2r\log M)^r\big)<2r\frac{\log \log M}{\log M}<1$$ 
so this in turn can be asymptotically bounded by $$M^3Q^{1-\frac{2}{r}}N(\mathfrak{d})^{\frac{2}{r}-1}.$$ 
By Proposition~\ref{zeta},  \eqref{nextstep} can be bounded by
$$M^{3+o(1)}\zeta_\nu(1-2/r)Q^{1-2/r} = M^{3+o(1)}Q^{1-2/r}$$
or, in the case $r=3$, by
$$M^{3+o(1)}Q^{7/12}.$$
Putting everything together, 
$$\sum_{q\in \mathcal{Q}}\sum_{\substack{\chi\text{ mod }q\\ \chi^r=\chi_0}}^{\star}\Big\vert\sum_{m\in\mathcal{M}}\chi(m)\Big\vert^2\ll \vert\mathcal{M}\vert Q'+M^4Q^{1/r}.$$ 
\end{proof}
It is instructive to compare this result to Theorem 1.6 of \cite{BR:2023}. Balestrieri and Rome use a much deeper method of proof to arrive at a similar-looking statement but with several subtle differences. To begin with, they require the elements of $\mathcal{M}$ to be square-free. This is an advantage to our method, albeit one that is irrelevant in the context we will use it in. Under this condition, the theorem of Balestrieri and Rome in particular implies that \begin{equation}\label{Balestrieri-Rome}\sum_{q\leq Q}\sum_{\substack{\chi\text{ mod }q\\ \chi^r=\chi_0\\ \chi\neq \chi_0}}\Big\vert\sum_{m\in\mathcal{M}}\chi(m)\Big\vert^2\ll Q^{1+o(1)}+M^{O(r)}.\end{equation} Note that the characters are no longer restricted to being primitive. This is because Balestrieri and Rome are able to obtain a much stronger bound when $Q$ is of comparable size to $M$, which becomes relevant when one considers the conductors of non-primitive characters. On the other hand, our result gives a stronger bound in the case where $Q$ is considerably larger than $M$.

Fix $k_1\in\mathcal{K}_1^{\star}$. Let $\mathcal{K}_0$ be the set of integers less than $x$ which are not divisible by 4 and whose odd prime factors are greater than $x^{\iota}$. Let $\mathcal{X}$ be the set of primitive order $r$ characters whose conductors lie in $\mathcal{K}_0$ such that $\chi(p)$ is constant on a subset of $\mathcal{P}^{\1}_{k_1}$ containing at least $(1-\frac{\log\log x}{\log x})|\mathcal{P}^{\1}_{k_1}|$ elements.

Suppose that for a proportion of values of $k_2\in \mathcal{K}_2^{\star}$ which is bounded from below by a positive constant, there is an index $r$ coset of $(\mathbb{Z}/k_2\mathbb{Z})^{\times}$ for which the proportion of $\mathcal{P}^{\1}_{k_1}$ not contained in the coset is less than $\frac{1}{\log x}$. There is then some $\eta\geq \iota$ for which there are $\frac{x^{\eta}}{\log^{O(1)}x}$ elements in $\mathcal{X}$ whose conductors are less than $x^{\eta}$. Fix such an $\eta$, and let $\mathcal{Y}$ be the set of such elements.

Let $\alpha$ be the smallest multiple of $\eta$ greater than $r\log\log x$. Let $Q=x^{\alpha}$. 
Let $\mathcal{Z}$ be the set of order $r$ primitive characters with conductor $<Q$ such that $\chi(p)$ is constant on a subset of $\mathcal{P}^{\1}_{k_1}$ containing almost all of the elements. 
Clearly,
\begin{equation}
\label{obvious}
\sum_{\chi\in \mathcal{Z}}\Big\vert\sum_{p\in \mathcal{P}^{\1}_{k_1}}\chi(p)\Big\vert^2\gg \vert\mathcal{Z}\vert\vert\mathcal{P}^{\1}_{k_1}\vert^2.
\end{equation}

We now wish to bound  the size of $\mathcal{Z}$ from below.  We consider characters $\chi$ of the form $\prod_{i=1}^n\chi_i$, where $n=\alpha/\eta$ and each $\chi_i$ is an element of $\mathcal{Y}$. Considering the size of $n$, we see that $\chi(p)$ is constant on a subset of $\mathcal{P}^{\1}_{k_1}$ containing almost all of the elements. Moreover, if the conductors $q_1,\ldots,q_n$ of the characters have the property that $(q_i,q_j)>2$ only if $i=j$, then $\chi$ will be non-trivial, so in this case
$\chi$ will be an element of $\mathcal{Z}$.

Note that for any $q$ which is the conductor of some element in $\mathcal{Y}$, there are at most $x^{\eta-\iota+o(1)}$ elements of $\mathcal{Y}$ whose conductors $q'$ satisfy $(q,q')>2$. Let $Y=\vert\mathcal{Y}\vert$. Then there are at least 
$$\binom{Y}{n}-x^{2\eta-\iota+o(1)}\binom{Y}{n-2} > \frac 12\binom Yn$$ 
products which result in elements of $\mathcal{Z}$. There is another consideration, however.
Even ignoring the order of the terms in the product, it is possible for our product representations to fail to be unique.
Indeed, this is because the elements of $\mathcal{Y}$ have conductors which may not be prime.  Fortunately, the number of representations of
each element of $\mathcal{Z}$ is bounded by
$$(n/\iota)!=(\log x)^{O(\log x/(\log\log x)^2)}=x^{o(1)}.$$
Note that $$\binom{Y}{n}=\big(\frac{x^{\eta}}{\log^{O(1)}x}\big)^n=Q(\log x)^{-O(\log x/(\log\log x)^2)}=Qx^{-o(1)}.$$ Putting everything together, we see that $\mathcal{Z}$ has $x^{-o(1)}Q$ elements.

However, from Proposition \ref{elliott}, we have 
$$\sum_{\chi\in \mathcal{Z}}\Big\vert\sum_{p\in \mathcal{P}^{\1}_{k_1}}\chi(p)\Big\vert^2\ll \vert\mathcal{Z}\vert\vert\mathcal{P}^{\1}_{k_1}\vert.$$
Comparing this with \eqref{obvious}, we obtain the contradiction
$\vert \mathcal{P}^{\1}_{k_1}\vert=O(1)$. Our supposition regarding cosets of index $r$ is therefore disproven. 

We turn now to smaller subgroups, where crude equidistribution estimates will suffice.
\begin{lemma}\label{large}
Let $\mathcal{B}$ be a set of positive integers less than some integer $B$. Then 
$$\sum_{k\in\mathcal{K}}\sum_{a\emph{ mod }k}\#\{b\in\mathcal{B}: b\equiv a\lessspacepmod{k}\}^2\ll \vert\mathcal{K}\vert\vert \mathcal{B}\vert+\Big(\frac{\log B}{\log x}\Big)^{1/\iota}\vert \mathcal{B}\vert^2.$$
\end{lemma}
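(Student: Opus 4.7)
The plan is to interpret the left-hand side combinatorially. Expanding the inner square gives
\[
\sum_{a\ (\text{mod }k)} \#\{b\in\mathcal{B}: b\equiv a\lessspacepmod{k}\}^2 = \#\{(b_1,b_2)\in\mathcal{B}^2 : b_1\equiv b_2\lessspacepmod{k}\},
\]
so the entire left-hand side counts triples $(k,b_1,b_2)$ with $k\in\mathcal{K}$, $b_1,b_2\in\mathcal{B}$, and $k\mid b_1-b_2$. The diagonal contribution from $b_1=b_2$ is exactly $|\mathcal{K}||\mathcal{B}|$, matching the first term of the claimed bound. Swapping the order of summation, it remains to show that $\sum_{b_1\ne b_2}\#\{k\in\mathcal{K}:k\mid b_1-b_2\}\ll (\log B/\log x)^{1/\iota}|\mathcal{B}|^2$, and for this it suffices to establish the pointwise estimate $\#\{k\in\mathcal{K}:k\mid n\}\ll (\log B/\log x)^{1/\iota}$ uniformly over nonzero integers $n$ with $|n|<B$.

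To prove this pointwise estimate, I would exploit both sides of the restriction defining $\mathcal{K}$. Every $k\in\mathcal{K}$ factors as $k=2m$ with $m\leq x/2$ and $P^-(m)>x^{\iota}$, so on the one hand $\Omega(m)\leq\log(x/2)/(\iota\log x)<1/\iota$; on the other hand, if $k\mid n$ then every prime factor of $m$ must be a prime exceeding $x^{\iota}$ that also divides $n$, and there are at most $\log|n|/(\iota\log x)\leq\log B/(\iota\log x)$ such primes. Thus $m$ is determined by a multiset of size at most $1/\iota$ drawn from at most $\log B/(\iota\log x)$ rough prime divisors of $n$.

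Counting these multisets yields
\[
\#\{k\in\mathcal{K}:k\mid n\}\leq \sum_{j=0}^{\lfloor 1/\iota\rfloor}\binom{\lfloor\log B/(\iota\log x)\rfloor + j - 1}{j},
\]
and the dominant term $j=\lfloor 1/\iota\rfloor$ is at most $\bigl(e\iota\cdot(\log B/(\iota\log x))\bigr)^{1/\iota}\ll(\log B/\log x)^{1/\iota}$ by the standard bound $\binom{N}{k}\leq(eN/k)^k$ combined with Stirling's formula, absorbing factors depending only on $\iota$. Summing this pointwise bound over the $|\mathcal{B}|^2$ ordered pairs $(b_1,b_2)$ then delivers the second term of the lemma.

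The main obstacle is keeping the two constraints on $m$ properly disentangled so that one supplies the exponent and the other the base. Using the $\Omega$-type bound alone — treating each rough divisor as an arbitrary element of a binary-choice space over rough primes of $n$ — yields only $2^{\log B/(\iota\log x)}$, which is exponential in $\log B/\log x$ and far too weak. The refinement above, where $m\leq x/2$ caps $\Omega(m)$ at $1/\iota$ \emph{independently of $B$} while $m\mid n$ caps the number of eligible rough primes at $\log B/(\iota\log x)$, is what forces the exponent $1/\iota$ and the base $\log B/\log x$ to separate correctly. Once this dichotomy is set up, the binomial estimate itself is routine.
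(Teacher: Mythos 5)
Your proposal is correct and is essentially the paper's own argument: expand the square to count triples $(k,b_1,b_2)$ with $k\mid b_1-b_2$, let the diagonal give $\vert\mathcal{K}\vert\vert\mathcal{B}\vert$, and bound the off-diagonal by noting that $k/2$ is a product of fewer than $1/\iota$ primes exceeding $x^{\iota}$, all drawn from the $O(\log B/\log x)$ such prime divisors of $b_1-b_2$, giving $\ll(\log B/\log x)^{1/\iota}$ admissible $k$ per pair. Your multiset/binomial counting just makes explicit the step the paper states in one line.
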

\begin{proof}
Suppose $(k,b,b')$ is a triple of integers with $k\in\mathcal{K}$ and $b$ and $b'$ distinct elements of $\mathcal{B}$ congruent to each other mod $k$. We must then have $k\mid b-b'$. Now, $b-b'$ can have $O(\log_xB)$ prime factors greater than $x^{\iota}$. For $b$ and $b'$ distinct, there are at most $O(\log^{1/\iota}_xB)$ values of $k$ such that $(k,b,b')$ is a triple with the aforementioned properties. 
\end{proof}
This lemma becomes especially interesting when $|\mathcal{B}|\ge |\mathcal{K}|$.  Consequently, 
we will take $\mathcal{B}$ to be the set of integers which can be written as a product of $10$ elements of $\mathcal{P}^{\1}_{k_1}$. We say that $\mathcal{P}^{\1}_{k_1}$ covers $(\mathbb{Z}/k_2\mathbb{Z})^{\times}$ sufficiently if for any set of $\lfloor\frac{k_2}{R_2}\rfloor$ residues mod $k_2$, almost none of the elements of $\mathcal{B}$ have residues mod $k_2$ in that set.

Suppose it is not true that for almost all $k_2\in\mathcal{K}^{\star}_2$, the set $\mathcal{P}^{\1}_{k_1}$ covers $(\mathbb{Z}/k_2\mathbb{Z})^{\times}$ sufficiently. Then we see that $$\sum_{k\in\mathcal{K}}\sum_{a\text{ mod }k}\#\{b\in\mathcal{B}: b\equiv a\lessspacepmod{k}\}^2\gg \vert\mathcal{K}_2^{\star}\vert\frac{x}{R_2}\frac{\vert\mathcal{B}\vert^2}{(x/R_2)^2}\gg \frac{R_2\vert\mathcal{B}\vert^2}{\log x}.$$ On the other hand, $\vert\mathcal{B}\vert>x$ and $\log_xb=O(1)$ for all $b\in\mathcal{B}$ so Lemma \ref{large} tells us that $$\sum_{k\in\mathcal{K}}\sum_{a\text{ mod }k}\#\{b\in\mathcal{B}: b\equiv a\lessspacepmod{k}\}^2\ll \vert \mathcal{B}\vert^2.$$ This gives a contradiction since it is not true that $R_2\ll \log x$.

If $\mathcal{M}$ is a set of square-free positive integers bounded by $M$ and $Q$ is a positive integer then \begin{equation}\label{applied}\sum_{q\leq Q}\sum_{\substack{\chi\text{ mod }q\\ \chi\neq \chi_0\\ \chi^{2^T}=\chi_0}}\Big\vert\sum_{m\in\mathcal{M}}\chi(m)\Big\vert^2\ll Q^{1+o(1)}+M^{O(1)}\end{equation} by applying \eqref{Balestrieri-Rome} with $r=2^T$.

We claim that for almost every $k_2$, there exists $A\in \mathcal{A}^{\1}_{k_1}$ such that $m_0A$ is odd order mod $k_2$. Suppose there are at least $\frac{x}{\log^2 x}$ values of $k_2$ for which this is not the case. Let $\mathcal{Q}(j)$ be the set of $j$-fold products of these values of $k_2$. Take $j_0$ to be the smallest value for which $$\vert \mathcal{Q}(j_0)\vert\geq \vert\mathcal{A}^{\1}_{k_1}\vert.$$ Then take $j'_0$ to be $j_0$ multiplied by some very large constant like $T^T$. Note that $$\vert\mathcal{A}^{\1}_{k_1}\vert\geq \binom{\vert\mathcal{P}^{\1}_{k_1}\vert}{J}\big/x^{o(J)}=\vert\mathcal{P}^{\1}_{k_1}\vert^{J(1-o(1))},$$ where the factor of $x^{o(J)}$ appears as the number of $J$ element subsets of $\mathcal{P}^{\1}_{k_1}$ containing the prime factors of any particular element of $\mathcal{A}^{\1}_{k_1}$. Remembering that $$\log\vert\mathcal{P}^{\1}_{k_1}\vert\gg \log x,$$ we then have the important asymptotic $$\log\vert\mathcal{A}^{\1}_{k_1}\vert\gg \log\max_{A\in\mathcal{A}^{\1}_{k_1}} A.$$ The conclusion of all this is that $$\frac{\log\vert Q(j'_0)\vert}{\log\max_{A\in\mathcal{A}^{\1}_{k_1}} A}$$ is larger than the smallest value of $O(1)$ one can take in \eqref{applied} while still being bounded. Note that $j'_0$ is the same order of magnitude as $J$. Taking $Q=Q(j'_0)$, one easily has the bound $\vert \mathcal{Q}\vert=Q^{1-o(1)}$, where $Q=\max_{q\in\mathcal{Q}}q$.

Note that $m_0A$ having even order mod $n$ is equivalent to the sum $$\sum_{\substack{\chi\text{ mod }n\\ \text{power of 2 order}}}\chi(m_0A)$$ being equal to 0. It thus follows that $$\sum_{q\in\mathcal{Q}}\sum_{A\in \mathcal{A}^{\1}_{k_1}}\sum_{\substack{\chi\text{ mod }q\\ \text{power of 2 order}}}\chi(m_0A)=0.$$ Moving the principal characters to the right-hand side, we obtain $$\sum_{q\in\mathcal{Q}}\sum_{\substack{\chi\text{ mod }q\\ \text{power of 2 order}\\ \chi\neq\chi_0}}\sum_{A\in \mathcal{A}^{\1}_{k_1}}\chi(m_0A)=-\vert \mathcal{Q}\vert\vert \mathcal{A}^{\1}_{k_1}\vert.$$ By the Cauchy-Schwarz inequality, $$\sum_{q\in\mathcal{Q}}\sum_{\substack{\chi\text{ mod }q\\ \text{power of 2 order}\\ \chi\neq\chi_0}}\Big\vert\sum_{A\in \mathcal{A}^{\1}_{k_1}}\chi(m_0A)\Big\vert^2\gg 2^{-O(j'_0)}\vert \mathcal{Q}\vert\vert \mathcal{A}^{\1}_{k_1}\vert^2,$$ yet this contradicts \eqref{applied}.

Choose random values of $k_1\in\mathcal{K}^{\star}_1$ and $k_2\in\mathcal{K}^{\star}_2$. Then $k_1$ and $k_2$ are relatively prime to $L$. Moreover, the following are true with probability 1:

\begin{itemize}
\item For every non-principal character $\chi$ mod $k_2$ and for any complex number $z$,
\begin{equation}\label{kequi}
\#\{p\in\mathcal{P}^{\1}_{k_1}: \chi(p)\neq z\}\geq \frac{\vert\mathcal{P}^{\1}_{k_1}\vert}{\log x}
\end{equation}
\item $\mathcal{P}_{k_1}^{\1}$ covers $(\mathbb{Z}/k_2\mathbb{Z})^{\times}$ sufficiently
\item For every non-principal character $\chi$ mod $k_1$ and for any complex number $z$,
\begin{equation*}
\#\{p\in\mathcal{P}^{\2}_{k_2}: \chi(p)\neq z\}\geq \frac{\vert\mathcal{P}^{\2}_{k_2}\vert}{\log x}
\end{equation*}
\item $\mathcal{P}_{k_2}^{\2}$ covers $(\mathbb{Z}/k_1\mathbb{Z})^{\times}$ sufficiently
\item $m_0A^{\1}_{k_1}$ has odd order mod $k_2$ for some $A^{\1}_{k_1}\in\mathcal{A}^{\1}_{k_1}$
\item $m_0A^{\2}_{k_2}$ has odd order mod $k_1$ for some $A^{\2}_{k_2}\in\mathcal{A}^{\2}_{k_2}$
\end{itemize}

In the next section, we will assume that we have values of $k_1$, $k_2$, $A^{\1}_{k_1}$, and $A^{\2}_{k_2}$ which satisfy these properties.
\section{Constructing Products with Specified Residues}\label{extra}
Let $\mathcal{P}^{\star}$ be a randomly chosen subset of $\mathcal{P}^{\1}_{k_1}$ with $J$ elements. Let $G$ be the group $(\mathbb{Z}/L\mathbb{Z})^{\times}\oplus (\mathbb{Z}/k_2\mathbb{Z})^{\times}\oplus \mathbb{Z}/\ell_0\mathbb{Z}$. We can think of the elements of $\mathcal{P}^{\star}$ as elements in $G$ under the embedding $(p\lessspacepmod{L},p\lessspacepmod{k_2},1)$. We can decompose $G$ as $G_1\oplus G_2\oplus G_3\oplus H$, where $G_2$ is determined uniquely up to isomorphism subject to the conditions that  $G_1$ is the maximal odd order subgroup of $(\mathbb{Z}/L\mathbb{Z})^{\times}$, $G_3$ is the maximal subgroup of $\mathbb{Z}/\ell_0\mathbb{Z}$ whose order is relatively prime to $2\phi(k_2)$, and $H$ is the $2$-Sylow subgroup of $G$. In particular, these four components have relatively prime orders.

For $i\in\{1,2,3,4\}$, take $\chi_i$ to be a character of $G_i$ (where $G_4$ means $H$). Note that the order of $\chi_4$ must be $O(1)$ since no prime factor of $Lk_2$ is congruent to 1 mod $2^T$. Let $r$ be the order of $\chi_2$. If $r=o(\lambda(L))$ then $\vert\arg\chi_1(p)\chi_2(p)\chi_3(p)\chi_4(p)\vert < \frac{1}{\lambda(L)^2}$ only if $\chi_2(p)=1$. Moreover, $\chi_2(p)\neq 1$ for at least $\frac{J}{\log^2 x}$ elements in $\mathcal{P}^{\star}$ with probability 1, assuming $r\neq 1$. Indeed, this is based on \eqref{kequi} and the fact that the component of $\chi_2$ associated with $\mathbb{Z}/\ell_0\mathbb{Z}$ has the same value on every $p$. On the other hand, if $r\gg \lambda(L)$ then the inequality 
$$\vert\arg\chi_1(p)\chi_2(p)\chi_3(p)\chi_4(p)\vert < \frac{1}{\lambda(L)^2}$$ 
implies that the residue of $p$ mod $k_2$ is one of $O(\frac{k_2}{\lambda(L)})$ possibilities. Moreover, due to the multiplicative structure of these possibilities, if almost all the elements in $\mathcal{P}^{\1}_{k_1}$ correspond to such a residue then for any positive integer $n$ of fixed size, there is a list of $O(\frac{k_2}{\lambda(L)})$ residues mod $k_2$ such that almost all products of $n$ elements of $\mathcal{P}^{\1}_{k_1}$ have residue mod $k_2$ in the list. Recall that $\mathcal{P}_{k_1}^{\1}$ covers $(\mathbb{Z}/k_2\mathbb{Z})^{\times}$ sufficiently. Thus, for the residue of $p$ mod $k_2$ to be one of $O(\frac{k_2}{\lambda(L)})$ possibilities for all but at most $\frac{J}{\log^2 x}$ elements of $\mathcal{P}^{\star}$ has probability 0. We conclude, under the assumption that $\chi_2$ is non-trivial, that for almost all choices of $\mathcal{P}^{\star}$, $\vert\arg\chi_1(p)\chi_2(p)\chi_3(p)\chi_4(p)\vert\geq \frac{1}{\lambda(L)^2}$ for at least $\frac{J}{\log^2 x}$ elements in $\mathcal{P}^{\star}$ (with $\chi_1$, $\chi_2$, $\chi_3$, and $\chi_4$ as before).

\begin{lemma}\label{hittarget}
Let $g\in G$ be 1 mod $L$, odd order mod $k_2$, and $w_0$ mod $\ell_0$. For almost all $J$ element subsets $\mathcal{P}^{\star}\subset \mathcal{P}^{\1}_{k_1}$, there is a product of distinct elements in $\mathcal{P}^{\star}$ equal to $g$ as an element of $G$.
\end{lemma}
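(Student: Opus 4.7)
I would follow the character-orthogonality strategy of Proposition~\ref{Lresidue}, now on the enlarged group $G = G_1\oplus G_2\oplus G_3\oplus H$. Writing $N(g)$ for the number of subsets $S\subset\mathcal{P}^\star$ whose product in $G$ equals $g$, orthogonality gives
\[
N(g) = \frac{1}{|G|}\sum_{\chi\in\hat G}\overline{\chi(g)}\prod_{p\in\mathcal{P}^\star}(1+\chi(p)),
\]
and I would decompose $\chi=\chi_1\chi_2\chi_3\chi_4$ along the direct sum. The three hypotheses on $g$ conspire to trivialize its projections to both $G_1$ and $H$: $g\equiv 1\pmod L$ kills the whole $(\mathbb{Z}/L\mathbb{Z})^\times$ component, $g$ having odd order mod $k_2$ kills the $2$-part of its $(\mathbb{Z}/k_2\mathbb{Z})^\times$ component, and $w_0$ having odd additive order kills the $2$-part of its $\mathbb{Z}/\ell_0\mathbb{Z}$ component. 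Hence $\chi_1(g)=\chi_4(g)=1$ for every character.

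Next I would isolate the main term coming from $\chi_1=\chi_2=\chi_3=\chi_0$, where $\overline{\chi(g)}=1$ and orthogonality over $\chi_4\in\hat H$ collapses the inner sum to $(|H|/|G|)\cdot\#\{S\subset\mathcal{P}^\star:\prod_{p\in S}p=\mathrm{id}\text{ in }H\}$. Since $H$ is a finite abelian group of exponent bounded by $\lambda(Lk_2\ell_0)\leq \lambda(L)$ (and embeds, as any finite abelian group does, into a cyclotomic unit group with matching $\lambda$), Lemma~\ref{standard} applied with $n=J$ furnishes at least $2^J/J^{\lambda(L)^2}$ such subsets, yielding a main-term lower bound of $|H|\,2^J/(|G|\,J^{\lambda(L)^2})$.

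For the error, I would invoke the uniform cluster-free bound established in the paragraph just preceding the lemma, together with the analogous bounds implicit in Sections~\ref{carmichaelconstruction} and~\ref{heart} for the (easier) cases of non-trivial $\chi_1$ or $\chi_3$ alone: for almost every $\mathcal{P}^\star$ and every $\chi$ with $(\chi_1,\chi_2,\chi_3)\neq(\chi_0,\chi_0,\chi_0)$, at least $J/\log^2 x$ primes $p\in\mathcal{P}^\star$ satisfy $|\arg\chi(p)|\geq 1/\lambda(L)^2$. On these primes $|1+\chi(p)|\leq 2(1-c/\lambda(L)^4)$, so $|\prod_p(1+\chi(p))|\leq 2^J\exp(-c\lambda(L)^6/\log^2 x)$ upon using $J=\lambda(L)^{10}$. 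Summing over at most $|G|$ non-trivial characters and dividing by $|G|$, the total error contribution to $N(g)$ is bounded by the same expression.

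The main term dominates the error provided $\log(|G|/|H|)+\lambda(L)^2\log J\ll \lambda(L)^6/\log^2 x$, which holds comfortably: $\lambda(L)\geq y^{\theta(1+\iota)}$ by Property~5 of Proposition~\ref{set-up-prop}, while $\log|G|$ and $\log x$ are polynomial in $y^{\theta+\rho}$ with $\rho<3\theta$. Hence $N(g)>0$ for almost all $\mathcal{P}^\star$. The main (minor) technical point is verifying the cluster-free bound uniformly in $\chi$: since the failure probability for each fixed character is much smaller than $1/|G|^2$ by the large-sieve machinery of Section~\ref{heart}, a union bound over the $|G|$ characters easily absorbs the loss.
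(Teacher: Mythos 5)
Your overall framework (orthogonality on $G=G_1\oplus G_2\oplus G_3\oplus H$, triviality of $\chi_1(g)$ and $\chi_4(g)$, a Davenport-constant count via Lemma~\ref{standard} for the main term, and the cluster-free bound from the discussion preceding the lemma for the error) matches the paper's, but there is a genuine gap in where you draw the line between main term and error. You take the main term to be only the $\chi_1=\chi_2=\chi_3=\chi_0$ block, so every character with non-trivial $\chi_1$ (an odd-order character of $(\mathbb{Z}/L\mathbb{Z})^{\times}$) must be estimated as error, and for that you assert a cluster-free bound ``for the (easier) cases of non-trivial $\chi_1$ or $\chi_3$ alone.'' No such bound exists for $\chi_1$, and none is provable with the paper's tools: the escape results established in Section~\ref{heart} (the bullet list at the end of that section) concern characters mod $k_2$ and mod $\ell_0$, the definition of $\mathcal{K}_1^{\star}$ only controls odd-index subgroups of $(\mathbb{Z}/L_2\mathbb{Z})^{\times}$, and nothing at all is proved about the distribution of $\mathcal{P}^{(1)}_{k_1}$ modulo $L_1$ (indeed each $p=dk_1+1$ is $1$ mod every prime dividing its $d$, and the introduction explicitly flags that ruling out concentration of the primes in a proper subgroup of $(\mathbb{Z}/L\mathbb{Z})^{\times}$ is ``difficult to prove and unnecessary''). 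If the primes did concentrate near $\ker\chi_1$ for some non-trivial odd $\chi_1$, the corresponding term has no decay and your union-bound remark cannot rescue it, since there is no probabilistic statement over the choice of $\mathcal{P}^{\star}$ for such characters; moreover partial concentration gives terms of uncontrolled phase that cannot simply be discarded.

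The paper's proof avoids this precisely by \emph{not} separating $\chi_1$: after killing non-trivial $\chi_2$ (probabilistically, via the paragraph before the lemma) and non-trivial $\chi_3$ (deterministically, since $\chi_3(p)$ is a fixed non-trivial root of unity of order coprime to the other components --- note this case needs no randomness, unlike in your write-up), it keeps the entire double sum over $\chi_1\in\hat{G}_1$, $\chi_4\in\hat{H}$ as the main term. That sum is automatically nonnegative: it equals $2^{-J}\vert G_1\vert\vert H\vert$ times the number of sub-products of $\mathcal{P}^{\star}$ equal to the identity in $G_1\oplus H$, which Lemma~\ref{standard} bounds below by $2^J/J^{\lambda(L)^2}$ (up to the group-order factor), and this is exactly where the hypothesis that $g$ is trivial in $G_1\oplus H$ is used. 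So the fix is structural: move all of $\hat{G}_1\times\hat{H}$ into the main term rather than inventing equidistribution mod $L$. Your error numerics and the treatment of $\chi_2$ are otherwise in line with the paper.
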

\begin{proof}
Consider 
\begin{equation}\label{expr}\sum_{\chi_1\in\hat{G}_1}\sum_{\chi_2\in\hat{G}_2}\sum_{\chi_3\in\hat{G}_3}\sum_{\chi_4\in\hat{H}}
\overline{\chi_2(g)\chi_3(g)}\prod_{p\in\mathcal{P}^{\star}}\frac{1+\chi_1(p)\chi_2(p)\chi_3(p)\chi_4(p)}{2}.\end{equation} 
Note that we have omitted $\chi_1(g)$ and $\chi_4(g)$ since these two terms are equal to 1. Suppose $\chi_2$ is non-principal. 
By our previous discussion, it is true with probability 1 that 
\begin{align*}\prod_{p\in\mathcal{P}^{\star}}\frac{1+\chi_1(p)\chi_2(p)\chi_3(p)\chi_4(p)}{2}&\leq \Big\vert \frac{1+e^{i/\lambda(L)^2}}{2}\Big\vert^{\frac{J}{\log^2x}}=\Big\vert \cos\big(\frac{1}{2\lambda(L)^2}\big)\Big\vert^{\frac{J}{\log^2x}}\\&<\Big(1-\frac{1}{10\lambda(L)^4}\Big)^{\frac{J}{\log^2x}}\ll e^{-\lambda(L)^5}.\end{align*} 
We will assume this to be the case for the rest of the proof. We conclude that \eqref{expr} is equal to 
$$\sum_{\chi_1\in\hat{G}_1}\sum_{\chi_3\in\hat{G}_3}\sum_{\chi_4\in\hat{H}}\overline{\chi_3(g)}\prod_{p\in\mathcal{P}^{\star}}\frac{1+\chi_1(p)\chi_3(p)\chi_4(p)}{2}+O(\phi(k_2)\phi(L)\ell_0e^{-\lambda(L)^5}).$$ 
Moreover, if $\chi_3$ is non-trivial then $\vert\arg\chi_1(p)\chi_3(p)\chi_4(p)\vert\geq \frac{1}{\lambda(L)^2}$ for all $p$
since $\chi_3(p)$ is the value of a non-trivial character at a generator.
Hence \eqref{expr} is equal to 
$$\sum_{\chi_1\in\hat{G}_1}\sum_{\chi_4\in\hat{H}}\prod_{p\in\mathcal{P}^{\star}}\frac{1+\chi_1(p)\chi_4(p)}{2}+O(\phi(k_2)\phi(L)\ell_0e^{-\lambda(L)^5}).$$ 

The main term in this expression is equal to the number of products of distinct elements of $\mathcal{P}^{\star}$ corresponding to the identity in $G_1\oplus H$, multiplied by $2^{-J}\vert G_1\vert\vert H\vert$. By Lemma \ref{standard}, this can be bounded from below by $\frac{\vert G_1\vert\vert H\vert}{J^{\lambda(L)^2}}$. Therefore, \eqref{expr} is positive, from which we can conclude the existence of a product of distinct elements of $\mathcal{P}^{\star}$ which corresponds to $g$ as an element of $G$.
\end{proof}
\begin{proposition}\label{alteredAGPprop}
Let $G$ be a finite abelian group, let $g\in G$, and let $n$ be such that for any sequence of $n$ elements of $G$, there is a subsequence whose product is 1. Suppose $r>t>n$. Suppose $S$ is a sequence of length $r$ whose terms multiply to $g$. Then there are at least $\binom{r}{t}\binom{r}{n}^{-1}$ subsequences of $S$ of length at most $t$ whose product is $g$.
\end{proposition}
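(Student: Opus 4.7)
The plan is to exhibit, in the style of Proposition~1.2 of \cite{AGP:1994}, a map $\Phi$ from the set of size-$t$ subsequences of $S$ into the set $\mathcal{W}_{\leq t}$ of subsequences with length at most $t$ and product $g$, whose fibers each have size at most $\binom{r}{n}$. Since the domain has $\binom{r}{t}$ elements, this immediately yields $|\mathcal{W}_{\leq t}| \geq \binom{r}{t}/\binom{r}{n}$. Concretely, the goal is to send each size-$t$ subsequence $T$ to some $W(T) \subseteq T$ with $|W(T)| \geq t - n$ and $\prod W(T) = g$; the containment $W \subseteq T$ together with the small size difference $t - |W| \leq n$ will then control the multiplicity.

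To construct $W(T)$, I set $x := g \cdot (\prod T)^{-1}$ and consider the augmented sequence $T \cup \{x\}$ of length $t+1 > n$. By the Davenport-type hypothesis, it contains a non-empty zero-sum subsequence $V'$ of size at most $n$. In the favourable case where $V'$ contains $x$, the subsequence $V := V' \setminus \{x\}$ lies in $T$ with $\prod V = x^{-1} = \prod T \cdot g^{-1}$, and $W(T) := T \setminus V$ satisfies both $\prod W(T) = g$ and $|W(T)| \geq t - n + 1$, as required. In the unfavourable case, $V'$ is a zero-sum subsequence of $T$ itself; I remove $V'$ from $T$ (which preserves the product), re-augment with $x$, and re-apply the Davenport extraction. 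Each iteration strictly shrinks the residual of $T$, and termination occurs either the moment the chosen zero-sum includes $x$ (completing the construction) or once the residual of $T$ has become zero-sum-free, at which point any zero-sum subsequence of the residual together with $\{x\}$ is forced to contain $x$.

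For the multiplicity bound, note that for any $W \in \mathcal{W}_{\leq t}$ with $|W| \geq t - n$, the number of size-$t$ supersets of $W$ in $S$ is $\binom{r-|W|}{t-|W|}$. Using $t - |W| \leq n$, this quantity is bounded by $\binom{r-t+n}{n} \leq \binom{r}{n}$. Combining this fibre bound with the surjection from size-$t$ subsequences to $\operatorname{Im}\Phi \subseteq \mathcal{W}_{\leq t}$ constructed above gives the desired inequality.

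The main obstacle is rigorously verifying that the iterative procedure used to construct $W(T)$ really does terminate with a usable $V \subseteq T$. The subtlety is that the augmented sequence must stay above the Davenport threshold $D(G) \leq n$ throughout the reduction, and one must argue that repeated removal of zero-sum subsequences from $T$ eventually forces the residual to be zero-sum-free (whence $x$ is necessarily included). Getting this termination analysis right, with explicit bookkeeping on how many elements can be stripped from $T$ while preserving both the Davenport hypothesis and the containment $V \subseteq T$ at the end, is the delicate step of the proof.
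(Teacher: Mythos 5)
Your fibre-counting step is fine, but the construction it rests on --- sending \emph{every} size-$t$ subsequence $T$ to some $W(T)\subseteq T$ with $\prod W(T)=g$ and $|W(T)|\ge t-n$ --- is not available, and this is a genuine gap rather than a delicate termination issue. Take $G=\mathbb{Z}/3\mathbb{Z}$ (written additively), $n=3$, $g=1$, and let $S$ consist of one term equal to $1$ and nine terms equal to $0$, so $r=10$ and the terms of $S$ do sum to $g$; take $t=4$. For a subsequence $T$ consisting of four $0$'s, no subsequence of $T$ has sum $g$, so no admissible $W(T)$ exists; such $T$ are not rare (here they form a positive proportion of all $t$-subsets), so they cannot be discarded, even though the conclusion of the proposition itself holds comfortably in this example. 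Your iteration breaks down exactly where you flagged it: once the residual $R$ of $T$ is zero-sum-free it has at most $n-1$ elements, so $R\cup\{x\}$ has at most $n$ elements, and unless $|R|=n-1$ the Davenport-type hypothesis no longer produces \emph{any} zero-sum subsequence of $R\cup\{x\}$, let alone one containing $x$. In the example the procedure strips all the $0$'s and terminates with $R\cup\{x\}=\{1\}$, which has no nonempty zero-sum subsequence at all, so no $V\subseteq T$ with $\prod V=(\prod T)g^{-1}$ is ever found --- indeed none exists.

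The structural reason the approach cannot be repaired in this form is that it never uses the hypothesis that the terms of $S$ multiply to $g$; without that hypothesis the statement is false (take every term equal to the identity and $g\neq 1$: the claimed lower bound is positive, yet no subsequence has product $g$). The argument the paper invokes, following Proposition 1.2 of \cite{AGP:1994}, only ever extracts product-one subsequences from each $t$-element subset --- which is always possible since $t>n$ --- and the target value $g$ enters through the global product of $S$ (via the complements of the extracted product-one pieces), not by realizing $g$ inside each individual $T$. Any proof along your lines must likewise route the congruence to $g$ through the whole sequence rather than through $T$ alone; as written, the map $\Phi$ is undefined on a large portion of its domain and the fibre bound has nothing to count.
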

\begin{proof}
This is proven in the same way as Proposition 1.2 from \cite{AGP:1994}.
\end{proof}
We take $G$ as before, and we take $g$ to be 1 mod $L$, $(m_0A_{k_1}^{\1})^{-1}$ mod $k_2$, and $w_0$ mod $\ell_0$. 
We take $S$ to be a maximal sequence of distinct elements of $\{p\in \mathcal{P}^{\1}_{k_1}: p\nmid A^{\1}_{k_1}\}$ whose product corresponds to $g$ in $G$, constructed in the following manner.
We start off by finding a non-maximal sequence of this type by applying Lemma~\ref{hittarget}.
Then we take advantage of the fact that the Davenport constant of $G$ is smaller than $J$ to repeatedly add primes in groups whose products are the identity.
Consequently, 
$$|S|\ge \vert\mathcal{P}^{\1}_{k_1}\vert-\omega(A^{\1}_{k_1})-J.$$
Now, take $r$ to be the length of $S$, $t$ to be equal to $\lfloor r^{\epsilon}\rfloor$ for some small $\epsilon>0$, and $n$ to be equal to $J$. Let $\Pi_1^{\1},\ldots,\Pi_{N_1}^{\1}$ be the products of distinct elements of $S$ corresponding to $g$. By Proposition \ref{alteredAGPprop}, 
\begin{align*}\log N_1&\geq \log\bigg(\binom{\vert S\vert}{\lfloor\vert S\vert^{\epsilon}\rfloor}\binom{\vert S\vert}{J}^{-1}\bigg)>\lfloor\vert S\vert^{\epsilon}\rfloor(1-\epsilon)\log \vert  S\vert-J\log \vert S\vert\\&>\lfloor\vert S\vert^{\epsilon}\rfloor(1-2\epsilon)\log \vert S\vert
\end{align*} 
so 
$$\frac{\log N_1}{\log((y^wx)^{\lfloor\vert S\vert^{\epsilon}\rfloor})}>\frac{(1-2\epsilon)\log\vert S\vert}{(1+\frac{1}{\delta-3\iota})\log(y^w)}.$$ 
We have 
\begin{align*}\vert S\vert&\geq\vert\mathcal{P}^{\1}_{k_1}\vert-\omega(A^{\1}_{k_1})-J>\frac{\vert \mathcal{D}_1\vert}{2\log x\log\log x}\\&\geq \binom{\lceil y^{\theta+\rho}\log^{-\kappa-1}y\rceil}{w}(2\log x\log\log x)^{-1}\\&>\Big(\frac{y^{\theta+\rho}\log^{-\kappa-1}y}{w}\Big)^w(2\log x\log\log x)^{-1}.\end{align*} 
Therefore,
\begin{align*}\frac{\log N_1}{\log((y^wx)^{\lfloor\vert S\vert^{\epsilon}\rfloor})}&>(1-2\epsilon)\frac{w\log(w^{-1}y^{\theta+\rho}\log^{-\kappa-1}y)(1-o(1))}{w(1+\frac{1}{\delta-3\iota})\log y}\\&=(1-2\epsilon-o(1))\big(1+\frac{1}{\delta-3\iota}\big)^{-1}\rho(1-\iota).\end{align*} 
Taking $\epsilon$ and $\iota$ sufficiently small, this quantity approaches $\frac{1}{\denom}$.

Repeat everything with indices reversed, obtaining $\Pi_1^{\2},\ldots,\Pi_{N_2}^{\2}$ with analogous properties. We claim that $\Pi_0:=A^{\1}_{k_1}A^{\2}_{k_2}\Pi_{i_1}^{\1}\Pi_{i_2}^{\2}$ is a Carmichael seed for every $i_1$ and $i_2$. To begin with, each prime factor of $\Pi_0$ is congruent to $a_0$ mod $q_0$. Also, $\omega(\Pi_0)\equiv 4w_0\equiv 2\lessspacepmod{\ell_0}$.
We note that $A^{\1}_{k_1}$ and $\Pi_i^{\1}$ are 1 mod $k_1$, that $$\Pi_i^{\2}\equiv (m_0A^{\2}_{k_2})^{-1}\lessspacepmod{k_1},$$ that $$A^{\2}_{k_2}\equiv \frac{1}{m_0}\lessspacepmod{L_1},$$ and that $$A^{\1}_{k_1}\equiv \Pi^{\1}_{k_1}\equiv \Pi^{\2}_{k_2}\equiv 1\lessspacepmod{L_1}.$$ Thus $\Pi_0\equiv \frac{1}{m_0}\lessspacepmod{k_1}$ and $\Pi_0\equiv \frac{1}{m_0}\lessspacepmod{L_1}$. Since $(k_1,L_1)=1$, it follows that $\Pi_0\equiv \frac{1}{m_0}\lessspacepmod{k_1L_1}$. Similarly, $\Pi_0\equiv \frac{1}{m_0}\lessspacepmod{k_2L_2}$. Since for each prime $p\mid \Pi_0$, either $p-1\mid k_1L_1$ or $p-1\mid k_2L_2$, this concludes the proof that $\Pi_0$ is a Carmichael seed.

Let $$z=(y^wx)^{\vert \mathcal{P}^{\1}_{k_1}\vert^{\epsilon}+\vert \mathcal{P}^{\2}_{k_2}\vert^{\epsilon}} x^{3J}.$$ Then $\frac{\log(N_1N_2)}{\log z}$ approaches $\frac{1}{\denom}$. Therefore, we have produced $z^{\frac{1}{\denom}-o(1)}$ Carmichael seeds less than $z$, concluding the proof of Proposition \ref{workhorse}.
\section{Constructing Carmichael Numbers from Carmichael Seeds}\label{end}
Let $r$ mod $m$ be an arithmetic progression. Let $g=(r,m)$ and $h=(\lambda(g),m)$. Suppose $n$ is a Carmichael number congruent to $r$ mod $m$. If $p$ and $q$ are primes dividing $n$ with $p\mid q-1$ then $p\mid q-1\mid n-1$, which is a contradiction. Combining this with the well-known fact that Carmichael numbers must be odd and square-free, we conclude that $(n,2\phi(n))=1$. Consequently we need $(g,2\phi(g))=1$. We also need $\lambda(g)$ to divide $n-1$, so certainly the arithmetic progressions $1\lessspacepmod{\lambda(g)}$ and $r\lessspacepmod{m}$ must have non-empty intersection. Put another way, we need $r\equiv 1\lessspacepmod{h}$. Finally, suppose $36\mid m$ and $r\equiv 3\lessspacepmod{12}$. Then $n-1$ is divisible by neither 3 nor 4, so no prime dividing $\frac{n}{g}$ can be 1 mod 3 or 1 mod 4. Note that $g$ is divisible by neither 2 nor 9. Therefore, $12\mid \frac{m}{g}$, which implies that $\frac{r}{g}\equiv \frac{n}{g}\lessspacepmod{12}$. Since every prime dividing $\frac{n}{g}$ must be congruent to 11 mod 12, we conclude that $\frac{r}{g}\equiv 1\text{ or }11\lessspacepmod{12}$. To summarize, the following must all be false:

\begin{itemize}
\item $(g,2\phi(g))>1$
\item $h\nmid r-1$
\item $36\mid m$, $r\equiv 3\lessspacepmod{12}$, and $\frac{r}{g}\equiv 5\text{ or }7\lessspacepmod{12}$
\end{itemize}

This partially justifies our earlier terminology, in which we called an arithmetic progression $r\lessspacepmod{m}$ with any of these properties Carmichael incompatible. We now want to show that Carmichael compatible arithmetic progressions always do contain Carmichael numbers. We start with two small reduction steps.

\begin{lemma}
In proving Theorem~\ref{main}, we may assume that the progression $\frac rg \lessspacepmod{\frac mg}$ contains integers which are $\pm 1$ mod $12$.
 \end{lemma}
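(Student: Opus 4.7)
The plan is to carry out a case analysis on $d:=(m/g,12)$. Any Carmichael number $n\equiv r\lessspacepmod m$ satisfies $(n/g,m/g)=1$, so we may assume $(r/g,m/g)=1$ (otherwise the progression contains no Carmichael numbers and Theorem~\ref{main} is vacuous), and in particular $(r/g,d)=1$. When $d\in\{1,2,3,4,6\}$, every unit mod $d$ is $\pm 1\lessspacepmod d$, so $r/g\equiv\pm1\lessspacepmod d$ holds automatically; by CRT, the progression $r/g\lessspacepmod{m/g}$ then meets the residues $\pm 1\lessspacepmod{12}$. When $d=12$ and $r/g\in\{1,11\}\lessspacepmod{12}$ the conclusion is immediate, so the only remaining case is $d=12$ with $r/g\in\{5,7\}\lessspacepmod{12}$, which the rest of the proof would address.

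In that exceptional case I would refine the progression by requiring an auxiliary prime factor. Via Dirichlet's theorem I would pick a prime $\ell$ satisfying: $(\ell,m\lambda(g))=1$; $\ell\not\equiv1\lessspacepmod p$ for every $p\mid g$; $v_p(\ell-1)\le v_p(r-1)$ for every $p\mid m$ with $v_p(m)>v_p(r-1)$; and $\ell\equiv\epsilon\cdot(r/g)\lessspacepmod{12}$ for a suitable choice of sign $\epsilon\in\{\pm1\}$. Set $m':=\ell m$ and let $r'$ be the unique residue mod $m'$ with $r'\equiv r\lessspacepmod m$, $\ell\mid r'$, and $\ell^2\nmid r'$. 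Then $g':=(r',m')=g\ell$, hence $m'/g'=m/g$ and $r'/g'\equiv(r/g)\ell^{-1}\equiv\epsilon\lessspacepmod{12}$. Thus the refined progression $r'\lessspacepmod{m'}$ contains integers $\equiv\pm1\lessspacepmod{12}$, and since every Carmichael number $\equiv r'\lessspacepmod{m'}$ is automatically $\equiv r\lessspacepmod m$, Theorem~\ref{main} for $r\lessspacepmod m$ would follow from Theorem~\ref{main} for $r'\lessspacepmod{m'}$.

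To close the reduction I would verify that $r'\lessspacepmod{m'}$ is Carmichael compatible. The first compatibility condition follows from the coprimality constraints on $\ell$; the third is vacuous because $r'/g'\equiv\pm1\lessspacepmod{12}$. For the second, $\lambda(g')=[\lambda(g),\ell-1]$, and using $(\ell,m)=1$ together with $\ell\nmid\lambda(g)$ one computes $h':=(\lambda(g'),m')=[h,(\ell-1,m)]$, where $h=(\lambda(g),m)$. Then $h\mid r-1\mid r'-1$, and the third bullet in the choice of $\ell$ yields $(\ell-1,m)\mid (r-1,m)\mid r'-1$, so $h'\mid r'-1$. The main obstacle is proving that the full list of congruence conditions on $\ell$ is simultaneously satisfiable by infinitely many primes with a valid sign $\epsilon$: when $3\mid g$ the condition $\ell\not\equiv1\lessspacepmod 3$ is incompatible with one of the two signs, and when $r\equiv3\lessspacepmod 4$ the bound $v_2(\ell-1)\le 1$ forces $\ell\equiv 7\lessspacepmod{12}$ and thereby pins down $\epsilon$. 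A finite case-by-case check, leveraging the Carmichael compatibility of $r\lessspacepmod m$ (which rules out exactly the problematic combinations via the third bullet of the definition), would show that in each subcase at least one $\epsilon$ yields a consistent system, from which Dirichlet's theorem produces infinitely many admissible $\ell$.
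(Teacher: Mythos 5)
Your reduction is structurally the same as the paper's: in the only nontrivial case ($12\mid m/g$, $r/g\equiv 5,7\lessspacepmod{12}$) you pass to the subprogression obtained by intersecting with $0\lessspacepmod{\ell}$ for an auxiliary prime $\ell$ in a prescribed class mod $12$, so that $g'=\ell g$, $m'/g'=m/g$, and $r'/g'\equiv (r/g)\ell^{-1}\equiv\pm 1\lessspacepmod{12}$, and then you verify Carmichael compatibility of $r'\lessspacepmod{m'}$; your valuation conditions on $\ell$ play the role of the paper's requirement that $(p-1)/s$ have no small prime factors with $s\in\{4,6\}$, and your verification of the first two compatibility bullets (including $h'=[h,(\ell-1,m)]\mid r'-1$) is sound. (Minor points: $(r/g,m/g)=1$ is automatic from $g=(r,m)$, and the clause $\ell^2\nmid r'$ is vacuous since $\ell\Vert m'$.)

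The genuine gap is the step you defer: showing that the congruence system for $\ell$ is consistent for some sign $\epsilon$, and your sketched justification does not cover all cases. Your third condition forces $\ell\not\equiv 1\lessspacepmod 3$ whenever $3\nmid r-1$ (since $3\mid m$), not only when $3\mid g$; combined with the forced $\ell\equiv 3\lessspacepmod 4$ when $r\equiv 3\lessspacepmod 4$, the system clashes with $\ell\equiv\pm r/g\equiv 5,7\lessspacepmod{12}$ precisely when $r\equiv 3$ or $11\lessspacepmod{12}$. The third bullet of Carmichael compatibility rules out only $r\equiv 3\lessspacepmod{12}$ (there $3\mid g$, so $36\mid m$, and the bullet forbids $r/g\equiv 5,7$). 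For $r\equiv 11\lessspacepmod{12}$ no bullet applies directly, and your system is genuinely unsolvable for either sign, so the case must be shown vacuous by a different argument: since $(g,12)=1$ and $r/g\equiv 5,7\lessspacepmod{12}$, we get $g\equiv 5$ or $7\lessspacepmod{12}$, hence $g$ has a prime factor congruent to $5$ or $7$ mod $12$, hence $3$ or $4$ divides $\lambda(g)$ and therefore $h=(\lambda(g),m)$, and the second bullet $h\mid r-1$ contradicts $r\equiv 11\lessspacepmod{12}$. This is exactly the case analysis the paper carries out before introducing the auxiliary prime; once you supply it, your finite check does close (at primes $p\geq 5$ your conditions only exclude the class $1$ modulo a prime power, so Dirichlet applies), and the argument is complete.
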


\begin{proof}
We may suppose $12\mid \frac mg$ as otherwise, no reduction is necessary.  Suppose $r-1$ is divisible by neither $3$ nor $4$.  Then $r\equiv 3\text{ or }11\lessspacepmod{12}$.
In the former case, $g$ is divisible by $3$, so $36\mid m$. By definition of Carmichael compatibility, $\frac rg\equiv 1\text{ or }11\lessspacepmod{12}$, again obviating the need for reduction.
The case $r\equiv 11\lessspacepmod{12}$ only requires reduction if $g\equiv 5\text{ or }7\lessspacepmod{12}$.
However, this would require $g$ to have a prime factor congruent to either $5$ or $7$ mod $12$, meaning that $\lambda(g)$ would be divisible by either $3$ or $4$.
This cannot be, since it would further imply that $h$, and consequently $r-1$, is divisible by either $3$ or $4$, violating our starting supposition.

Thus, we may restrict our attention to the case that $r-1$ is divisible by either 3 or 4. Let $s$ be equal to 6 in the former case and 4 in the latter case. Take $p$ to be a large prime congruent to either 5 or 7 mod 12 such that $\frac{p-1}{s}$ has no small prime factors. Note that $s=(p-1,r-1)$. To find Carmichael numbers congruent to $r$ mod $m$, it suffices to consider Carmichael numbers congruent to $0$ mod $p$ and $r$ mod $m$. Let $r'$ mod $m'$ be the intersection of these two congruence classes. Then $m'=pm$ and $(r',m')=pg$. Setting $g'=(r',m')$, we have the following chain of divisions and equalities: $$(g',2\phi(g'))\mid (g,2\phi(p))(p,2\phi(g))=(g,s)\mid (r,r-1)=1.$$ Also, setting $h'=(\lambda(g'),m')$, we have $h'=[s,h]$. Since $h$ and $s$ both divide $m$, we have that $r'$ is congruent to $r$ modulo both $h$ and $s$. Recalling that $h\mid r-1$ (by hypothesis) and $s\mid r-1$ (essentially by definition), we can then conclude that $h'\mid r'-1$. Finally, $r'/g'\equiv p^{-1}r/g\equiv 1\text{ or }11\lessspacepmod{12}$. Thus $r'\lessspacepmod{m'}$ is Carmichael compatible. 
We conclude that the arithmetic progression $r \lessspacepmod m$ contains a Carmichael compatible subprogression with the property we want.

\end{proof}

\begin{lemma}
In proving Theorem~\ref{main}, we may further assume that the progression $g^{-1} \lessspacepmod{ \lambda(g)}$ contains integers which are $\pm 1$ mod $12$.
\end{lemma}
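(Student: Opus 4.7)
My plan is to mirror the previous lemma: identify when the condition fails, and in those cases pass to a Carmichael-compatible sub-progression by intersecting with $0\lessspacepmod{p}$ for a carefully chosen auxiliary prime $p$. Since $(g,2\phi(g))=1$ forces $(g,\lambda(g))=1$, the progression $g^{-1}\lessspacepmod{\lambda(g)}$ contains an integer $\equiv \pm 1\lessspacepmod{12}$ iff $g\equiv \pm 1\lessspacepmod{(\lambda(g),12)}$. Enumerating over $d=(\lambda(g),12)\in\{1,2,3,4,6,12\}$ and using that $3\mid g$ forces $3\nmid \lambda(g)$ and that $g$ is odd, one checks that the condition holds automatically unless $d=12$ and $g\equiv 5$ or $7\lessspacepmod{12}$.

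In this bad case I would first deduce $12\nmid m$: otherwise $12\mid h\mid r-1$ would give $r/g\equiv g^{-1}\equiv 5$ or $7\lessspacepmod{12}$, but $g$ coprime to $12$ makes $(m/g,12)=12$, so the previous lemma requires $r/g\equiv\pm 1\lessspacepmod{12}$, a contradiction. In particular $36\nmid m$, so the third Carmichael-incompatibility condition will be vacuous after any reduction that multiplies $m$ by a prime coprime to $6$.

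The reduction then chooses a large prime $p$ with $p\equiv g^{-1}\lessspacepmod{12}$ (so that $pg\equiv 1\lessspacepmod{12}$), with $v_l(p-1)\leq v_l(\lambda(g))$ for every prime $l$ such that $v_l(m)>v_l(\lambda(g))$ (e.g., for $g\equiv 5\lessspacepmod{12}$ with $v_2(\lambda(g))=2$ one takes $p\equiv 5\lessspacepmod{8}$), coprime to every prime factor of $g$, and larger than $\phi(g)$. These conditions amount to a union of congruence classes modulo a bounded modulus, and Dirichlet's theorem supplies infinitely many such $p$. Let $r'\lessspacepmod{m'}$ be the CRT intersection of $r\lessspacepmod{m}$ and $0\lessspacepmod{p}$, giving $m'=pm$, $g'=pg$, and $\lambda(g')=[\lambda(g),p-1]$. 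Using $\gcd(m,[a,b])=[\gcd(m,a),\gcd(m,b)]$, one computes $h':=(\lambda(g'),m')=[h,\gcd(m,p-1)]=h$, so $h'\mid r-1$ and hence, since $r'\equiv r\lessspacepmod{m}$ and $h'\mid m$, $h'\mid r'-1$.

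It remains to verify the other Carmichael-compatibility conditions and the previous lemma's condition for the new progression. $(g',2\phi(g'))=1$ follows from the coprimality arrangements in the choice of $p$. For the previous lemma's condition, $r'/g'\lessspacepmod{m/g}$ must meet $\pm 1\lessspacepmod{12}$: since $12\nmid m$ we have $(m/g,12)=(m,12)\in\{1,2,3,4,6\}$, and a routine subcase check combining the congruences forced by $h\mid r-1$ with $pg\equiv 1\lessspacepmod{12}$ confirms $r'/g'\equiv \pm 1\lessspacepmod{(m,12)}$ in every case. The new condition then follows from $g'\equiv 1\lessspacepmod{12}$ together with $12\mid\lambda(g)\mid\lambda(g')$. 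The main obstacle here is bookkeeping---simultaneously preserving three congruence-based conditions under the reduction---and the key observation that makes this tractable is the structural fact $12\nmid m$ (hence $36\nmid m$) in the bad case.
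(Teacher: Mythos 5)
Your proposal is correct and follows the same overall reduction as the paper: in the only problematic case ($12\mid\lambda(g)$ and $g\equiv 5$ or $7$ mod $12$) one uses the first reduction to see $12\nmid m/g$ (so $36\nmid m'$ and the third incompatibility condition is vacuous), then passes to the subprogression cut out by $0$ mod $p$ for a large auxiliary prime $p$, and verifies $(g',2\phi(g'))=1$ and $h'=h\mid r'-1$. The one real difference is how $p$ is produced. The paper takes $s\in\{4,6\}$ and requires $(p-1)/s$ to have no small prime factors---the same rough-shifted-prime device as in the first lemma---which forces $p\equiv 5$ or $7$ mod $12$ (hence $pg\equiv\pm 1$ mod $12$, which is why either value of $s$ works) and yields $(p-1,m)\mid(s,m)\mid h$ and $(p-1,g)=1$ in one stroke. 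You instead impose finitely many explicit congruence conditions: $p\equiv g^{-1}$ mod $12$, and $v_l(p-1)\le v_l(\lambda(g))$ for the primes $l$ with $v_l(m)>v_l(\lambda(g))$; note that this list automatically contains every prime divisor of $g$ (since $(g,\lambda(g))=1$), which is exactly what makes $(g,p-1)=1$ and hence $(g',2\phi(g'))=1$ go through, so your terse ``coprimality arrangements'' remark does have the needed content behind it. Your route is slightly more elementary (Dirichlet in place of a sieve-type existence statement), your valuation computation of $h'=[h,(m,p-1)]=h$ replaces the paper's use of $s\mid\lambda(g)$, and you make explicit two points the paper leaves implicit: that $g'\equiv 1$ mod $12$ gives the new condition, and that the first lemma's condition survives the reduction (since $p\equiv 5$ or $7$ mod $12$ is $\pm 1$ modulo every proper divisor of $12$ and $r'/g'\equiv p^{-1}r/g$ mod $m/g$). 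Both routes deliver the same conclusion; the paper's buys uniformity by reusing the prime construction from the previous lemma, while yours buys a cleaner, purely congruence-theoretic verification.
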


\begin{proof}
We will assume that $12\mid \lambda(g)$ and $g\equiv 5\text{ or }7\lessspacepmod{12}$, as otherwise there is nothing to prove.
Note that $12\nmid \frac mg$ for the following reason. If this is not the case then $12\mid h\mid r-1$, from which it follows that $\frac{r}{g}\equiv 5\text{ or }7\lessspacepmod{12}$. 
However, this will not be the case assuming the first reduction step has been performed.

Now, first picking $s$ to be either 
4 and 6, take $p$ to be a large prime such that $\frac{p-1}{s}$ has no small prime factors. Then define $r'$, $m'$, $g'$, and $h'$ as before. We have $$(g',2\phi(g'))\mid (g,2\phi(p))(p,2\phi(g))=(g,s)=1$$ since $g$ is relatively prime to $\phi(g)$ (and $12\mid \lambda(g)\mid \phi(g)$). Moreover, $$h'=(\lambda(g'),m')=(\lambda(g),mp)=h$$ since $s\mid \lambda(g)$ so $\lambda(g')\mid \lambda(g)\frac{p-1}{s}$. Now, $h\mid m\mid r'-r$ so we may conclude that $h'\mid r'-1$. Combining this with the afore-mentioned fact that $36\nmid m$, we see that $r'$ mod $m'$ is Carmichael compatible, so we have now completed the reduction process.
\end{proof}

Take $\tilde{m}=\frac{m}{3}$ if $3\mid g$ and $9\nmid m$, and otherwise take $\tilde{m}=m$. Take $a$ mod $q$ to be the intersection of $\frac{r}{g}$ mod $\tilde{m}$ and $g^{-1}$ mod $\lambda(g)$. 
Note that these are compatible: since $(g,\lambda(g))=1$, it suffices to show that $\frac rg\equiv g^{-1}\lessspacepmod{(\frac mg,\lambda(g))}$, which is immediate from the modulus dividing $h$.
We can now assume without loss of generality that the arithmetic progression $a$ mod $q$ contains integers congruent to 1 or 11 mod 12. Indeed, those residues mod 12 which can be represented by integers which are $a$ mod $q$ are the same as those which can be represented by integers congruent to $\frac{r}{g}$ mod $\frac{m}{g}$ and $g^{-1}$ mod $\lambda(g)$, using the fact that $3\mid \tilde{m}$ if and only if $3\mid \frac{m}{g}$.

\begin{lemma}\label{twoprimes}
Let $a\lessspacepmod{q}$ be a reduced congruence class (which is to say that $(a,q)=1$) containing integers congruent to $1$ or $11$ mod $12$, and let $g$ be a positive integer. There exists a square-free integer $P$ with $\frac{\lambda(P)}{2}$ relatively prime to $2q$ such that $a^{-1}P$ has odd order mod $q$ and $gP$ has odd order mod $\frac{\lambda(P)}{2}$.

\end{lemma}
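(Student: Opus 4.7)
The plan is to construct $P$ as a product of a bounded number $k$ of distinct primes, each chosen in prescribed arithmetic progressions. Let $\tilde a$ be the representative of $a\pmod{q}$ with $\tilde a\equiv\epsilon\pmod{12}$, $\epsilon\in\{1,-1\}$, which exists by hypothesis. Take $k$ to be a small positive integer with $(-1)^k\equiv\epsilon\pmod{12}$; usually $k\in\{1,2\}$ suffices, though we may need $k=3$ to accommodate Fermat-prime divisors of $q$ on which $\tilde a$ acts trivially in the relevant quotient (see below). Each $p_i$ will be chosen $\equiv -1\pmod{12}$, which forces $p_i\equiv 3\pmod{4}$ and $p_i\equiv 2\pmod{3}$, so that $(p_i-1)/2$ is odd and coprime to $3$.

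I next impose on each $p_i$ the following conditions, cutting out a nonempty union of reduced residue classes modulo $12q$: (i) $p_i\not\equiv 1\pmod{r}$ for each odd prime $r\mid q$, which combined with the mod-$12$ condition ensures that $(p_i-1)/2$ is coprime to $2q$, hence that $\lambda(P)/2$ is coprime to $2q$; (ii) the residues $p_i\pmod{q}$ are chosen so that $\prod_i p_i$ lies in the coset $\tilde a O_q$, where $O_q$ denotes the maximal odd-order subgroup of $(\mathbb{Z}/q\mathbb{Z})^\times$. The hypothesis $\tilde a\equiv\pm 1\pmod{12}$ is exactly what makes these conditions jointly solvable: for each odd prime $r\mid q$ with $O_r$ trivial (i.e., $r=3$ or $r$ a Fermat prime), the value of $\prod_i p_i\pmod{r}$ is forced to be $\tilde a$, and the corresponding parity/congruence constraint on $k$ is consistent precisely because of the mod-$12$ hypothesis. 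Any such choice makes $P$ square-free, $\lambda(P)/2$ coprime to $2q$, and $a^{-1}P\in O_q\pmod{q}$, taking care of the first three parts of the statement.

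For the remaining condition—that $gP$ has odd order modulo $\lambda(P)/2$—I would strengthen the sieve further: require each $(p_i-1)/2$ to be $y$-rough for some threshold $y>g$, and require the various $(p_i-1)/2$ to be pairwise coprime; both are standard upper-bound sieve goals, comparable to Lemma~\ref{Selberg}. Every prime $\ell\mid\lambda(P)/2$ then exceeds $g$, and divides a unique $p_i-1$, with $P\equiv\prod_{j\neq i}p_j\pmod{\ell}$. The condition decomposes by CRT into: for each such pair $(i,\ell)$, the element $g\prod_{j\neq i}p_j$ lies in the odd-order subgroup of $(\mathbb{Z}/\ell\mathbb{Z})^\times$, a $2^{v_2(\ell-1)}$-th power residue condition. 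The main obstacle is handling this uniformly as $\ell$ varies over the (unknown) prime factors of $\lambda(P)/2$. I would apply $2^s$-th power reciprocity: each such condition pulls back, as a function of $p_i$, to a congruence condition modulo a bounded modulus depending only on $g$ and the coarse residues of the other $p_j$'s. These additional congruence conditions are compatible with the earlier ones and can be absorbed into the Dirichlet/sieve framework, yielding the desired $P$.
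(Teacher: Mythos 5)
The first half of your construction is essentially the paper's opening move: choosing a bounded number of primes $p_i\equiv 3\pmod 4$ in suitable residue classes mod $q$ so that each $(p_i-1)/2$ is coprime to $2q$ and the product lands in the right coset mod $q$, with the hypothesis $a\equiv\pm1\pmod{12}$ used exactly to rule out the obstruction when $12\mid q$ (the paper phrases this as factoring $a\equiv a_1a_2a_3\pmod q$ with $(a_i-1,q)\mid 2$, so that $a^{-1}P\equiv 1$ rather than merely odd order mod $q$, but this difference is cosmetic). The genuine gap is in your last step, which is the heart of the lemma. The condition that $g\prod_{j\neq i}p_j$ have odd order modulo every prime $\ell\mid (p_i-1)/2$ does not ``pull back, as a function of $p_i$, to a congruence condition modulo a bounded modulus.'' When $v_2(\ell-1)\geq 2$ the relevant $2^{s}$-th power residue condition is not determined by $\ell$ modulo any fixed integer at all (the field $\mathbb{Q}(\zeta_{2^s},m^{1/2^s})$ is not abelian over $\mathbb{Q}$), and even in the purely quadratic case reciprocity converts it into conditions on $\ell$ modulo $4g\prod_{j\neq i}p_j$ --- a modulus that grows with the other primes, some of which are not yet chosen, so there is a circularity among the $p_i$. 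Worse, demanding prescribed residues of \emph{all} prime factors $\ell$ of $(p_i-1)/2$ is a factorization constraint on the shifted prime $p_i-1$, not a union of congruence conditions on $p_i$, and no standard sieve delivers a lower bound for primes with every prime factor of $p-1$ confined to prescribed classes to such moduli; your $y$-roughness and pairwise-coprimality requirements are harmless but do not touch this issue.

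The paper's proof shows what is actually needed. It breaks the circularity by taking $p_1\ll p_2\ll p_3$ with each scale enormous compared to the previous one, so that the odd-order conditions modulo $(p_1-1)/2$ and modulo $N_2=\mathrm{lcm}\{(p-1)/2: p\in\mathcal{P}_2\}$ can be imposed as honest congruence conditions on the \emph{later} primes ($p_2\equiv -1\pmod{(p_1-1)/2}$, $p_3\equiv A\pmod N$). That leaves exactly one condition that cannot be imposed this way: that $gp_1p_2$ have odd order modulo $(p_3-1)/2$, since $p_3$ is the last prime chosen. This is handled not by reciprocity-as-congruence but by an averaging argument: the primes are restricted so that $(p-1)/2$ has no prime factor $\equiv 1\pmod{2^T}$ (a positive-proportion condition, enforceable by an upper-bound sieve as in Lemma \ref{Selberg}), which makes the number of $2$-power-order characters per modulus bounded; then one assumes the condition fails for all pairs $(p_2,p_3)$, applies Cauchy--Schwarz to the resulting character-sum identity, and contradicts the fixed-order large sieve of Proposition \ref{elliott}. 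Your proposal contains no substitute for this averaging step, so as written it does not prove the lemma.
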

\begin{proof}
Suppose $\frac{p-1}{2}$ is square-free and none of its prime factors are congruent to 1 mod $2^T$, where as before $T$ is a large integer. Then for $m$ relatively prime to $\frac{p-1}{2}$, the equality $$\sum_{\substack{\chi\text{ mod }\frac{p-1}{2}\\ \chi^{2^T}=\chi_0}}\chi(m)=0$$ is equivalent to $m$ having even order mod $\frac{p-1}{2}$.

We claim that for $\ell=2\text{ or }3$, there exist residues $a_1,\ldots,a_\ell$ mod $q$ such that $\prod_{i=1}^{\ell}a_i\equiv a\lessspacepmod{q}$ and moreover $(a_i-1,q)\mid 2$ for each $1\leq i\leq \ell$. Indeed, by the Chinese remainder theorem, such residues exist for any value of $\ell>1$ unless $3\mid q$ or $4\mid q$. In the former case, $\ell$ must be even if $a\equiv 1\lessspacepmod{3}$ and otherwise $\ell$ must be odd. In the latter case, $\ell$ must be even if $a\equiv 1\lessspacepmod 4$ and otherwise $\ell$ must be odd. Therefore, the only scenario in which no such $\ell$ exists is if $12\mid q$ and $a\equiv 5\text{ or }7\lessspacepmod{12}$. However, this is exactly ruled out by hypothesis.

For now, we assume $\ell=3$. Let $n_1$ be a large integer. Let $\mathcal{T}$ be the set of positive integers none of whose prime factors are congruent to 1 mod $2^T$. Let $$\mathcal{P}_1=\{p: p\sim n_1, \frac{p-1}{2}\in\mathcal{T}, p\equiv a_1\lessspacepmod{q}\}.$$ Choose $p_1\in \mathcal{P}_1$. Let $n_2$ be arbitrarily large in terms of $n_1$. Let $$\mathcal{P}_2=\{p: p\sim n_2, \frac{p-1}{2}\in\mathcal{T}, p\equiv a_2\lessspacepmod{q}, p\equiv -1\lessspacepmod{\frac{p_1-1}{2}}\}.$$ Let $n_3$ be arbitrarily large in terms of $n_2$. Take $N_1=\frac{p_1-1}{2}$. Let $A_1$ be a residue mod $N_1$ such that $-gA_1$ is an odd order residue mod $N_1$. Let $N_2$ be the least common multiple of $\{\frac{p-1}{2}: p\in\mathcal{P}_2\}$. Let $A_2$ be a residue mod $N_2$ such that $gp_1A_2$ is an odd order residue mod $N$. Note that $q$, $N_1$, and $N_2$ are pairwise coprime. 
Let $A\lessspacepmod{N}$ be the intersection of the progressions $a_3\lessspacepmod{q}$, $A_1\lessspacepmod{N_1}$, and $A_2\lessspacepmod{N_2}$.

Let $n_3$ be arbitrarily large in terms of $n_2$. Take $$\mathcal{P}_3=\{p: p\sim n_3, \frac{p-1}{2}\in\mathcal{T}, \text{P}^-\big(\frac{p-1}{2}\big)\geq n_3^{1/100}, p\equiv A\lessspacepmod{N}\}.$$ It is worth explaining how we know that $\mathcal{P}_1$, $\mathcal{P}_2$, and $\mathcal{P}_3$ are large. Since $\mathcal{P}_3$ has the most complicated description, we will focus on showing that $\vert\mathcal{P}_3\vert=n_3^{1-o(1)}$. Lemma \ref{rosser}, combined with the Bombieri-Vinogradov theorem, gives us the lower bound we expect (up to a constant) for $$\#\{p: p\sim n_3, \text{P}^-\big(\frac{p-1}{2}\big)\geq n_3^{1/100}, p\equiv A\lessspacepmod{N}\}.$$ On the other hand, for each $q\equiv 1\lessspacepmod{2^T}$, the Selberg sieve (\`{a} la Lemma \ref{Selberg}) gives us the upper bound we expect (up to a constant) for $$\#\{p: p\sim n_3, q\mid \frac{p-1}{2}, \text{P}^-\big(\frac{p-1}{2}\big)\geq n_3^{1/100}, p\equiv A\lessspacepmod{N}\}.$$ Subtracting off these primes for each such $q$ in a suitable range, we obtain $\mathcal{P}_3$. As long as $T$ is chosen large enough in terms of the constants of these sieve results, most of the original primes will remain.

Now, suppose $$\sum_{p_2\in \mathcal{P}_2}\sum_{p_3\in \mathcal{P}_3}\sum_{\substack{\chi\text{ mod }\frac{p_3-1}{2}\\ \chi^{2^T}=\chi_0}}\chi(gp_1p_2)=0.$$ Then $$\sum_{p_2\in \mathcal{P}_2}\sum_{p_3\in \mathcal{P}_3}\sum_{\substack{\chi\text{ mod }\frac{p_3-1}{2}\\ \chi\neq \chi_0\\ \chi^{2^T}=\chi_0}}\chi(gp_1p_2)=-\vert\mathcal{M}\vert\vert \mathcal{Q}\vert,$$ where $\mathcal{M}=\{gp_1p_2: p_2\in\mathcal{P}_2\}$  and $\mathcal{Q}=\{\frac{p-1}{2}: p\in\mathcal{P}_3\}$. By the Cauchy-Schwarz inequality, $$\vert \mathcal{M}\vert^2\vert \mathcal{Q}\vert^2\leq \Big(\sum_{p_3\in\mathcal{P}_3}\sum_{\substack{\chi\text{ mod }\frac{p_3-1}{2}\\ \chi\neq \chi_0\\ \chi^{2^T}=\chi_0}}1\Big)\Big(\sum_{p_3\in\mathcal{P}_3}\sum_{\substack{\chi\text{ mod }\frac{p_3-1}{2}\\ \chi\neq \chi_0\\ \chi^{2^T}=\chi_0}}\Big\vert\sum_{p_2\in\mathcal{P}_2}\chi(gp_1p_2)\Big\vert^2\Big)$$ so $$\vert\mathcal{M}\vert^2\vert\mathcal{Q}\vert\ll \sum_{q\in\mathcal{Q}}\sum_{\substack{\chi\text{ mod }q\\ \chi\neq \chi_0\\ \chi^{2^T}=\chi_0}}\Big\vert\sum_{m\in\mathcal{M}}\chi(m)\Big\vert^2.$$ However, since $\max_{m\in\mathcal{M}}m=\vert\mathcal{Q}\vert^{o(1)}$ and $\vert \mathcal{Q}\vert=\max_{q\in\mathcal{Q}}q^{1-o(1)}$, 
Proposition \ref{elliott} implies that $$\vert\mathcal{M}\vert\vert\mathcal{Q}\vert\gg \sum_{q\in\mathcal{Q}}\sum_{\substack{\chi\text{ mod }q\\ \chi\neq \chi_0\\ \chi^{2^T}=\chi_0}}\Big\vert\sum_{m\in\mathcal{M}}\chi(m)\Big\vert^2.$$ Indeed, non-primitive characters are easily dealt with, using the fact that $\text{P}^-\big(\frac{p-1}{2})\geq n_3^{1/100}$. These contradicting inequalities tell us that there is some $p_2\in\mathcal{P}_2$ and some $p_3\in\mathcal{P}_3$ for which $gP$ is an odd order residue mod $\frac{p_3-1}{2}$, where $P=p_1p_2p_3$. Note also that $gP\equiv gp_1A_2\lessspacepmod{\frac{p_2-1}{2}}$ and $gP\equiv -gA_1\lessspacepmod{\frac{p_1-1}{2}}$, both of which are odd order by assumption. Since $a^{-1}P\equiv 1\lessspacepmod{q}$, this concludes the proof in the case that $\ell=3$. The other case can be dealt with in a strictly easier fashion, omitting all instances of $a_1$, $n_1$, and $p_1$.

\end{proof}
We again take $a$ to be an integer such that \begin{equation}\label{1}a\equiv \frac{r}{g}\lessspacepmod{\tilde{m}}\end{equation} and \begin{equation}\label{2}a\equiv g^{-1}\lessspacepmod{\lambda(g)},\end{equation} while taking $q=[\tilde{m},\lambda(g)]$. We saw before why it can be assumed that being congruent to $a$ mod $q$ is not incompatible with being congruent to 1 or 11 mod 12. The other thing to note is that $a\lessspacepmod{q}$ is a reduced congruence class because $(g,\lambda(g))=1$. We now take $P$ to be an integer supplied by the preceding lemma. Let $a'$ be an integer with \begin{equation}\label{3}a'\equiv aP^{-1}\lessspacepmod{q}\end{equation} and \begin{equation}\label{4}a'\equiv (gP)^{-1}\lessspacepmod{\frac{\lambda(P)}{2}}.\end{equation} Note that these two moduli are relatively prime so such a value of $a'$ does indeed exist. Also take $q'=[q,\lambda(P)/2]$. Note that $a'$ has odd order modulo $q'$. We can therefore find some integer $a_0$ whose square is congruent to $a'$ mod $q'$. Modulo an odd prime power, every reduced residue class which has a square root at all has one which is not 1 modulo the prime, and modulo any power of 2, every reduced residue class which has a square root has one which can be represented by an integer which is 3 mod 4. By the Chinese remainder theorem, we can therefore assume that $(a_0-1,q')\mid 2$. We can also safely assume that $a_0$ is 3 mod 4. Let $q_0=[q',4]$. Then in fact $(a_0-1,q_0)=2$. Take $\ell_0$ to be the order of $a_0$ mod $q_0$, and also take $m_0=gP$.

We now are in a position to prove Theorem \ref{main}. We apply Proposition \ref{workhorse} with these values of $a_0$, $q_0$, $m_0$, and $\ell_0$ to obtain Carmichael seeds. We claim that if $\Pi$ is such a Carmichael seed then $gP\Pi$ is a Carmichael number congruent to $r$ mod $m$. Indeed, $\Pi$ is the product of primes which are $a_0\lessspacepmod{q_0}$, and as $\omega(\Pi)\equiv 2\lessspacepmod{\ell_0}$, it follows that \begin{equation}\label{5}\Pi\equiv a_0^2\equiv a'\lessspacepmod{q'}.\end{equation} By \eqref{3}, $gP\Pi\equiv ga\lessspacepmod{q}$. Then \eqref{1} implies that $gP\Pi$ is congruent to $r$ mod $m$, while \eqref{2} implies that $gP\Pi\equiv 1\lessspacepmod{\lambda(g)}$.
Also, it follows from \eqref{4} and \eqref{5} that $gP\Pi\equiv 1\lessspacepmod{\frac{\lambda(P)}{2}}$, while Proposition \ref{workhorse} tells us that  $gP\Pi\equiv 1\lessspacepmod{\frac{p-1}{2}}$ for each $p\mid \Pi$. We then see that $gP\Pi-1$ is divisible by $\lambda(gP\Pi)$ (using the fact that each prime dividing $P\Pi$ is congruent to 3 mod 4). Thus $gP\Pi$ is indeed a Carmichael number congruent to $r$ mod $m$.

Finally, we return to the question of how small $\frac{\phi(n)}{n}$ can be for a Carmichael number $n$. 

\begin{theorem}
\label{Corollary}
$\liminf_{n\emph{ Carmichael}}\frac{\phi(n)}{n}=0$.
\end{theorem}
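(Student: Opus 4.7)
The plan is to deduce this from Theorem~\ref{main} by exhibiting, for every $\epsilon>0$, Carmichael numbers $n$ with $\phi(n)/n<\epsilon$. It suffices to produce a squarefree odd integer $m$ satisfying $(m,2\phi(m))=1$ and $\phi(m)/m<\epsilon$. For such an $m$ the progression $0\lessspacepmod{m}$ is Carmichael compatible: with $g=(0,m)=m$ we have $(g,2\phi(g))=1$ by hypothesis; $h=(\lambda(m),m)$ divides $(\phi(m),m)=1$, so $h=1$ trivially divides $-1$; and the third incompatibility condition fails because $m$ is odd, so $36\nmid m$. Theorem~\ref{main} then supplies infinitely many Carmichael numbers $n$ with $m\mid n$, and for any such (necessarily squarefree) $n$,
$$\phi(n)/n=\prod_{p\mid n}(1-1/p)\le\phi(m)/m<\epsilon.$$

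It therefore remains to construct, for any $\epsilon>0$, a squarefree odd $m$ with $(m,\phi(m))=1$ and $\phi(m)/m<\epsilon$. I would do this greedily. Start with $S=\{3\}$. At each stage let $X$ be a parameter far exceeding every prime currently in $S$, and adjoin to $S$ all primes $p\in(X,X^2]$ satisfying both (i) $p\not\equiv 1\lessspacepmod{q}$ for every $q\in S$, and (ii) no prime factor of $p-1$ lies in $(X,X^2]$. Condition (i) ensures that old primes in $S$ do not divide any new $p-1$, while (ii) ensures that no newly adjoined prime divides $p'-1$ for another new $p'\in(X,X^2]$. So $(m,\phi(m))=1$ is preserved throughout, where $m:=\prod_{q\in S}q$.

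The analytic input comes from Bombieri--Vinogradov (or just Dirichlet) for condition (i) and from Mertens' theorem for condition (ii): the density among primes in $(X,X^2]$ of those satisfying both conditions is at least a positive constant times $e^{-C}$, where $C:=\sum_{q\in S}1/(q-1)$. Since $\sum_{p\in(X,X^2]}1/p\to\log 2$, each stage increases $C$ by $\gg e^{-C}$, and solving the resulting recursion yields $C_n\to\infty$ (albeit slowly, like $\log n$); hence $\phi(m)/m=\exp(-C+O(1))$ can be made arbitrarily small. The main technical obstacle lies in the bookkeeping: establishing a quantitative sieve-theoretic lower bound for the number of primes in $(X,X^2]$ meeting both conditions simultaneously, and verifying that the self-interaction among newly added primes does not spoil the divergence of $C$. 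Neither point is serious, but both require care. After finitely many stages we produce $m$ with $\phi(m)/m<\epsilon$, and applying Theorem~\ref{main} to $0\lessspacepmod{m}$ as described above completes the proof.
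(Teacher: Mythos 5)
Your reduction to Theorem~\ref{main} is exactly the one the paper uses: produce a squarefree $m$ with $(m,2\phi(m))=1$ and $\phi(m)/m$ arbitrarily small, observe that $0\lessspacepmod{m}$ is then Carmichael compatible, and note that any Carmichael number divisible by $m$ satisfies $\phi(n)/n\le\phi(m)/m$; that part of your argument is fine. The paper obtains such an $m$ at no cost by citing Erd\H{o}s's theorem \cite{E:1960} on the greedy sequence $q_1=3$, $q_i$ the least prime exceeding $q_{i-1}$ with $q_i-1$ coprime to all earlier terms: that sequence is dense enough that $\sum 1/q_i$ diverges, so $Q=\prod_{q_i<x}q_i$ serves as $m$. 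You instead try to build $m$ by hand in blocks, and that is where the gap lies.

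The unjustified step is the claim that ``Mertens' theorem'' yields a positive-proportion lower bound for condition (ii) on the block $(X,X^2]$. For $p\in(X,X^2]$ one has $p-1<X^2$, so ``no prime factor of $p-1$ in $(X,X^2]$'' is precisely the statement that all prime factors of $p-1$ are at most $X$, i.e.\ that $p-1$ is roughly $p^{1/2}$-smooth. That a positive proportion of primes have this property is true, but it is a genuinely nontrivial theorem on smooth shifted primes (Erd\H{o}s, Wooldridge, Pomerance, Balog, Friedlander, Baker--Harman), not a consequence of Mertens; and the first-moment argument you would presumably run---subtracting, for each prime $\ell\in(X,X^2]$, a Brun--Titchmarsh or Selberg-sieve bound for $\#\{p\le X^2:\ell\mid p-1\}$---does not close, because writing $p-1=\ell a$ and summing the sieve bound over cofactors $a\le X$ produces a quantity larger than $\pi(X^2)$ (the sieve constants together with the cofactor sum of singular series, which is $\asymp\log X$, overwhelm the heuristic factor $1-\log 2$). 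Note also that the cheap evasion of taking the block $(X,2X]$, where (ii) is automatic, fails: each stage then adds only about $e^{-C}/\log X$ to $C$, and since $X$ must at least square between stages these increments sum geometrically, so $C$ stays bounded. The construction is repairable---either quote a smooth-shifted-primes theorem with exponent $1/2$ (or simply Erd\H{o}s's sequence, as the paper does), or shrink the blocks to $(X,X^{1+\epsilon_n}]$ with $\epsilon_n$ chosen of size about $e^{-C_n}$: then bad primes have $p-1=\ell a$ with $a\le X^{\epsilon_n}$, a routine sieve shows they form an $O(\epsilon_n)$ proportion, which can be made smaller than the $\gg e^{-C_n}$ proportion meeting condition (i), and the increment to $C$ is still $\gg e^{-2C_n}$, which suffices for $C_n\to\infty$. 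But as written, the key analytic input of your construction is not established.
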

\begin{proof}

In \cite{E:1960}, Erd\H os proves that if $q_1=3$ and for $i\ge 2$, $q_i$ is the smallest prime greater than $q_{i-1}$ such that $q_i-1$ is not divisible by any previous prime in the sequence, then the number of primes in the sequence between $x$ and $2x$ grows proportionally to $\frac x{\log x\log\log x}$.  Take $Q$ to be the product of all $q_i$ less than $x$.  Then 
$$-\log \frac{\phi(Q)}Q \gg \sum_{2 < 2^i < x} \frac 1{i \log i}$$
which diverges as $x\to \infty$.  Moreover, for each $x$, Theorem~\ref{main} tells us that there are Carmichael numbers $n$ divisible by $Q$.  For any such $n$, we have that
$\frac{\phi(n)}n \le \frac{\phi(Q)}Q$, so the left hand side can be made arbitrarily small.

\end{proof}
Note that the Carmichael numbers produced in the proof of Theorem~\ref{Corollary} can be taken so that $\frac{\phi(n)}{n}$ is as close to $\frac{\phi(Q)}{Q}$ as we wish.
By taking subproducts of the sequence in \cite{E:1960}, it follows that 
$\{\phi(n)/n\}$ as $n$ ranges over the set of Carmichael numbers is dense in $[0,1]$.
I am grateful to Carl Pomerance for pointing this out, in addition to suggesting a simplification to the proof of Theorem~\ref{Corollary}.

\section*{Acknowledgements}
I would like to thank Andrew Granville, Michael Larsen, Carl Pomerance, and Thomas Wright for many helpful comments and suggestions.

\end{document}